\documentclass[a4paper, 10pt]{article}
\usepackage{tocloft}
\usepackage{comment}
\usepackage[margin = 1in]{geometry}
\usepackage{aliascnt}

\usepackage{parskip}
\usepackage{nomencl}

\usepackage{fancyhdr} 

\tocloftpagestyle{fancy}
\usepackage{ucs}
\usepackage[T1]{fontenc}
\usepackage{amsmath}
\usepackage{listings}
\usepackage{amssymb} 
\usepackage{amsthm}
\usepackage{amsfonts}
\usepackage{graphicx}
\usepackage{microtype}
\usepackage{mathdots}
\usepackage{mathtools}
\usepackage[shortlabels]{enumitem}
\usepackage{yhmath}
\usepackage{marginnote}
\usepackage{mathrsfs}
\usepackage{bbm}

\usepackage{tikz}
\usepackage{tikz-cd}

\definecolor{linkgreen}{RGB}{33,100,60}
\definecolor{chilligreen}{RGB}{33,100,60}
\definecolor{chillired}{RGB}{140,00,10} 
\definecolor{chillidarkred}{RGB}{120,5,15}

\usepackage[colorlinks=true, citecolor = chilligreen, urlcolor = chillired, backref=page]{hyperref}

\usepackage[nameinlink,capitalise]{cleveref}
\renewcommand*{\backref}[1]{}
\renewcommand*{\backrefalt}[4]{%
	\ifcase #1%
	\or (Cited on page~#2.)%
	\else (Cited on pages~#2.)%
	\fi%
}

\theoremstyle{definition}

\newtheorem{definition}{Definition}[section]

\newaliascnt{question}{definition}
\newtheorem{question}[question]{Question}
\aliascntresetthe{question}

\crefname{question}{Question}{Questions}

\newaliascnt{construction}{definition}
\newtheorem{construction}[construction]{Construction}
\aliascntresetthe{construction}
\crefname{construction}{Construction}{Constructions}

\newaliascnt{observation}{definition}
\newtheorem{observation}[observation]{Observation}
\aliascntresetthe{observation}
\crefname{observation}{Observation}{Observations}

\newaliascnt{conjecture}{definition}

\aliascntresetthe{conjecture}
\crefname{conjecture}{Conjecture}{Conjectures}

\newaliascnt{lemma}{definition}
\newtheorem{lemma}[lemma]{Lemma}
\aliascntresetthe{lemma}
\crefname{lemma}{Lemma}{Lemmas}

\newaliascnt{fact}{definition}

\aliascntresetthe{fact}
\crefname{fact}{Fact}{Facts}

\newaliascnt{setting}{definition}

\aliascntresetthe{setting}
\crefname{setting}{Setting}{Settings}

\newaliascnt{notation}{definition}
\newtheorem{notation}[notation]{Notation}
\aliascntresetthe{notation}
\crefname{notation}{Notation}{Notations}

\newaliascnt{remark}{definition}
\newtheorem{remark}[remark]{Remark}
\aliascntresetthe{remark}
\crefname{remark}{Remark}{Remarks}

\newaliascnt{corollary}{definition}
\newtheorem{corollary}[corollary]{Corollary}
\aliascntresetthe{corollary}
\crefname{corollary}{Corollary}{Corollaries}

\newaliascnt{theorem}{definition}
\newtheorem{theorem}[theorem]{Theorem}
\aliascntresetthe{theorem}
\crefname{theorem}{Theorem}{Theorems}

\newaliascnt{proposition}{definition}
\newtheorem{proposition}[proposition]{Proposition}
\aliascntresetthe{proposition}
\crefname{proposition}{Proposition}{Propositions}

\newaliascnt{example}{definition}
\newtheorem{example}[example]{Example}
\aliascntresetthe{example}
\crefname{example}{Example}{Examples}

\newaliascnt{recollection}{definition}

\aliascntresetthe{recollection}
\crefname{recollection}{Recollection}{Recollections}

\newaliascnt{folklore}{definition}
 
\aliascntresetthe{folklore}
\crefname{folklore}{assumption}{assumptions} 
\Crefname{folklore}{Assumption}{Assumptions}

\newaliascnt{assumption}{definition}

\aliascntresetthe{assumption}
\crefname{assumption}{Assumption}{Assumptions}

\newcommand{\Addresses}{{
  \bigskip
  \footnotesize

  R.~Quinn, \textsc{Utrecht Geometry Center, Universiteit Utrecht, The Netherlands}\\ \nopagebreak
  \textit{E-mail address}: \texttt{r.quinn@uu.nl}

  \medskip

  Q.~Zhu, \textsc{Max Planck Institute for Mathematics, Bonn, Germany}\\ \nopagebreak
  \textit{E-mail address}: \texttt{qzhu@mpim-bonn.mpg.de}

}}

\newtheorem{mainthm}{Theorem}

\newtheorem*{theorem*}{Theorem}
\newtheorem*{example*}{Example}
\newtheorem*{question*}{Question}
\newtheorem*{problem*}{Problem}
\newtheorem*{thmG*}{Theorem G}

\newcommand{\tb}{\textcolor{chillired}}

\newcommand{\B}{\mathrm B}
\newcommand{\C}{\mathscr C}

\newcommand{\Sc}{\mathcal S}

\let\OriginalO\O
\DeclareRobustCommand{\Oslash}{{\OriginalO}}
\renewcommand{\O}{\mathscr{O}}

\DeclareMathAlphabet{\mathbbold}{U}{bbold}{m}{n}

\newcommand{\E}{\mathbb E}  
\newcommand{\F}{\mathbb F}
\newcommand{\R}{\mathbb R}
\renewcommand{\S}{\mathbb S}
\newcommand{\N}{\mathbb N}
\newcommand{\Z}{\mathbb{Z}}

\usepackage{bbm}
\newcommand{\one}{\mathbbm 1}

\newcommand{\tmf}{\operatorname{tmf}}
\newcommand{\ku}{\operatorname{ku}}
\newcommand{\ko}{\operatorname{ko}}

\newcommand{\KU}{\operatorname{KU}}

\newcommand{\MU}{\operatorname{MU}}
\newcommand{\MUP}{\operatorname{MUP}}
\newcommand{\MW}{\operatorname{MW}}
\newcommand{\MUR}{{\MU_{\mathbb{R}}}}
\newcommand{\MUPR}{{\MUP_{\mathbb{R}}}}
\newcommand{\MUG}{{\MU^{(\!(G)\!)}}}

\newcommand{\BPG}{{\BP^{(\!(G)\!)}}}
\newcommand{\BPGm}{{\BP^{(\!(G)\!)}}\langle m\rangle}

\newcommand{\MWR}{{\MW_{\mathbb{R}}}}

\newcommand{\CP}{\mathbb{CP}}
\newcommand{\HP}{\mathbb{HP}}
\newcommand{\CPR}{\mathbb{CP}_{\mathbb{R}}}
\newcommand{\RP}{\mathbb{RP}}
\newcommand{\BO}{\mathrm{BO}}
\newcommand{\THH}{\operatorname{THH}}

\newcommand{\Sp}{\mathrm{Sp}}
\newcommand{\Alg}{\mathrm{Alg}}

\newcommand{\LMod}{\mathrm{LMod}}

\newcommand{\Op}{\mathrm{Op}}

\newcommand{\BP}{\operatorname{BP}}

\newcommand{\BPR}{{\BP_{\mathbb{R}}}}

\newcommand{\BU}{\operatorname{BU}}
\newcommand{\BSU}{\operatorname{BSU}}
\newcommand{\BUR}{{\BU_{\mathbb{R}}}}

\newcommand{\KO}{\operatorname{KO}}
\newcommand{\RO}{\operatorname{RO}}

\newcommand{\uHZ}{\mathrm{H}\underline{\mathbb{Z}}}

\newcommand{\Map}{\operatorname{Map}}

\newcommand{\Pic}{\operatorname{Pic}}
\newcommand{\Th}{\operatorname{Th}}

\newcommand{\Res}{\operatorname{Res}}

\renewcommand{\phi}{\varphi}

\newcommand{\map}{\operatorname{map}}

\newcommand{\gl}{\mathrm{gl}}

\newcommand{\id}{\mathrm{id}}

\newcommand{\Coind}{\operatorname{Coind}}

\newcommand{\res}{\operatorname{res}}
\newcommand{\Nm}{\operatorname{Nm}}

\newcommand{\BUP}{\operatorname{BUP}}

\newcommand{\MOP}{\operatorname{MOP}}
\newcommand{\cofib}{\operatorname{cofib}}
\newcommand{\fib}{\operatorname{fib}}
\newcommand{\MWP}{\mathrm{MWP}}
\newcommand{\MWPR}{\mathrm{MWP}_{\R}}

\newcommand{\kuR}{\ku_{\R}}

\newcommand{\SU}{\operatorname{SU}}

\newcommand{\aut}{\operatorname{aut}}
\newcommand{\BS}{\mathrm{BM}}

\DeclareMathOperator*{\colim}{colim}

\makeatletter
\DeclareRobustCommand{\myuline}[2][0pt]{%
  \ifmmode
    \uline{\hphantom{#2}\kern-#1}%
    \kern#1%
    \mathllap{\mathpalette\my@cont@{#2}}%
  \else
    \uline{\phantom{#2}\kern-#1}%
    \kern#1%
    \llap{\contour{white}{#2}}%
  \fi
}

\newcommand{\ul}{\myuline}
\newcommand{\ol}{\overline}

\usepackage{stmaryrd}

\newcommand{\piccn}{\operatorname{pic}}

\DeclareFontFamily{U}{min}{}
\DeclareFontShape{U}{min}{m}{n}{<-> udmj30}{}

\usepackage[normalem]{ulem}
\usepackage{contour}
\usepackage{mathtools} 

\contourlength{0.8pt}

\newcommand{\my@cont@}[2]{\contour{white}{\mbox{$\m@th#1#2$}}}
\makeatother
\newcommand{\THR}{\mathrm{THR}}

\newcommand{\Snaith}{\mathrm{Snaith}}

\usepackage{fontawesome5}

\newcommand{\notehelper}[3]{\textcolor{#3}{$\blacksquare$}\marginpar{\ifodd\thepage\raggedright\else\raggedleft\fi\color{#3}\tiny \textbf{#2:} #1}}

\begin{document}

	\title{\vspace{-1cm}\textbf{Structured Real Snaith Equivalences}}
    \author{\textsc{Ryan Quinn \& Qi Zhu}}
    \date{} 
	\maketitle

            \hypersetup{linkcolor=chilligreen}

		\begin{abstract}
			 \noindent We give a short proof of the Real Snaith equivalences and multiplicative refinements thereof. The key ingredient is control over structured Real orientations, which we manage through Wilson space theory. In particular, this machinery can be applied to recover an $\E_{2\rho}$-algebra structure on Real Brown--Peterson theory. We apply the Real Snaith theorems to compute $\THR(\KU_{\R})$ and $\THR(\MUPR)$. This requires a norm inverted variant of the Real Snaith theorems, which we prove via the nilpotence theorem.
		\end{abstract}
\hypersetup{linkcolor=chilligreen}

\begin{figure}[ht!]
\centering
\includegraphics[width=95mm]{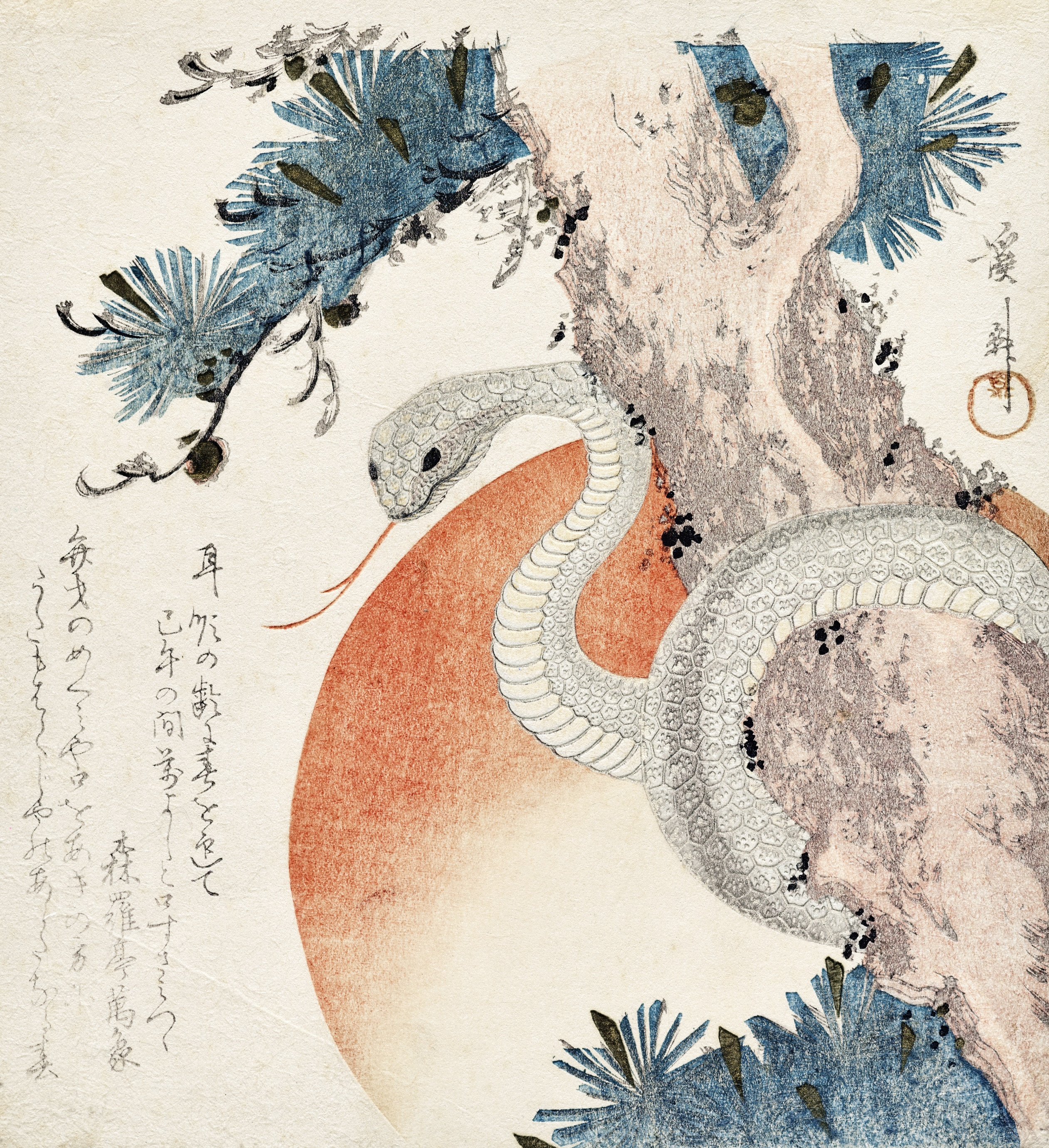}
\end{figure}

\vspace{-0.5cm}


\begingroup
\renewcommand\thefootnote{}
\footnote{\emph{Snake Coiled around a Pine Tree before the Rising Sun} -- Keisai Eisen, 1821.}%
\footnote{\emph{Date}: \today}
\addtocounter{footnote}{-2}
\endgroup
\thispagestyle{empty} 
\setcounter{tocdepth}{2}    
\setcounter{secnumdepth}{4} 
\tableofcontents

\thispagestyle{empty}
\newpage

\section{Introduction}
\label{section: intro}
The periodic complex $K$-theory spectrum $\KU$ and the periodic complex bordism spectrum $\MUP$ play central roles in stable homotopy theory due to their intimate connection to geometry. Indeed, Atiyah and Hirzebruch introduced $\KU$ through complex vector bundles with their direct sums \cite{atiyahHirzebruch1959riemannroch, atiyahHirzebruch1961vectorbundles}, while the homotopy groups of $\MUP$ are essentially represented by stably complex manifolds up to complex bordism due to Thom \cite{thom1954thomspace}. There is another link to geometry provided by theorems of Snaith \cite{snaith1979algebraiccobordism}:
\[ \KU \simeq \Sigma_+^{\infty} \CP^{\infty}[\beta^{-1}] \quad \text{and} \quad \MUP \simeq \Sigma_+^{\infty} \BU[\beta^{-1}], \]
where $\beta$ is the Bott class, a generator in $\pi_2$. So $\KU$ and $\MUP$ are connected to the geometry of line bundles and vector bundles.

Structured refinements of these equivalences suggest a study of the multiplicative structures of $\KU$ and $\MUP$ through the tensor product of vector bundles. However, while $\KU \simeq \Sigma_+^{\infty} \CP^{\infty}[\beta^{-1}]$ is an equivalence of $\E_{\infty}$-algebras, Hahn and Yuan surprisingly proved that $\MUP \simeq \Sigma_+^{\infty} \BU[\beta^{-1}]$ is not an equivalence of $\E_5$-algebras \cite{hahnYuan2020exotic}, so that Snaith's equivalence is not feasible to studying the $\E_{\infty}$-algebra structure of $\MUP$. Nonetheless, Hahn--Yuan exhibit an equivalence of $\E_2$-algebras, allowing us to study the $\E_2$-algebra structure through this viewpoint.

The complex numbers take a distinguished role in this story and it is possible to incorporate the geometry of complex conjugation. By framing the equivalences in terms of $C_2$-equivariant homotopy theory, we arrive at Real candidates of the Snaith theorems studying Atiyah's $K$-theory with Reality $\KU_{\R}$ and the Araki--Landweber periodic Real bordism spectrum $\MUPR$. By $\CP^{\infty}_{\R}$ and $\BU_{\R}$ we denote $C_2$-spaces refining $\CP^{\infty}$ and $\BU$ via the complex conjugation action. The main theorem of this article consists of the Real Snaith equivalences.

Let $\rho$ be the regular representation for the group $C_2$. We denote by $\E_{\rho}$ the corresponding little disk $C_2$-operad and by $\E_{\infty}^{C_2}$ the terminal $C_2$-operad. An $\E_{\infty}^{C_2}$-ring spectrum is also called $C_2$-$\E_{\infty}$-ring spectrum and is informally an $\E_{\infty}$-algebra together with all norm multiplications and suitable coherences.

\begin{mainthm}[\cref{theorem: Erho Real Snaith}] \label{mainthm: Real Snaith}
    There are equivalences
    \[ \KU_{\R} \simeq \Sigma_+^{\infty} \CPR^{\infty}[\beta_{\R}^{-1}] \quad \text{and} \quad \MUP_{\R} \simeq \Sigma_+^{\infty} \BU_{\R}[\beta_{\R}^{-1}] \]
	of $\E_{\infty}^{C_2}$-algebras resp.~of $\E_{\rho}$-algebras, where $\beta_{\R}$ is a generator in $\pi_{\rho}$.
\end{mainthm}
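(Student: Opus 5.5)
We construct the comparison maps from structured Real orientations and then check that they are equivalences after passing to underlying spectra and to geometric fixed points.

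\textbf{Construction of the maps.} For $\KU_\R$, the determinant of the tautological Real line bundle gives a map of $C_2$-spaces $\CPR^\infty \to \Omega^\infty$-units. Concretely, the Real Bott class gives a Real orientation $\Sigma^{-\rho}\Sigma^\infty\CPR^\infty \to \KU_\R$, and its multiplicative structure yields a map of $C_2$-$\E_\infty$-spaces $\CPR^\infty \simeq K(\Z(1),\rho) \to \mathrm{GL}_1\KU_\R$. Since $\CPR^\infty$ is a Real Eilenberg--MacLane space, it suffices here to prove $\E_\infty^{C_2}$-commutativity of a map that is determined up to homotopy by a class in $\pi_\rho$. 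This is where the Wilson space input enters: we would use the identification of $\CPR^\infty$ as a $C_2$-$\E_\infty$ group with free homotopy, so that maps out of it into strict units are controlled.

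For $\MUPR$ we only aim for $\E_\rho$. We would use the universal Real vector bundle: $\BU_\R \times \Z \to \mathrm{Pic}(\MUPR)$ classifies Thom spectra. After passing to $\E_\rho$-structures via Wilson space theory (the $\rho$-fold loop structure on $\BU_\R$ viewed as $\Omega^{\rho}$ of a Real Wilson space), it gives an $\E_\rho$-map $\Sigma^\infty_+\BU_\R \to \MUPR$ through the structured Real orientation. This is the Real analogue of Hahn--Yuan.

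In both cases the Bott class $\beta_\R\in\pi_\rho$ maps to a unit. The map therefore factors through the localization $\Sigma^\infty_+(-)[\beta_\R^{-1}]$, which inherits the relevant algebra structure.

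\textbf{Checking the equivalences.} A map of genuine $C_2$-spectra is an equivalence iff it is so on underlying spectra and on geometric fixed points $\Phi^{C_2}$.

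On underlying spectra, the maps are the classical Snaith maps $\Sigma^\infty_+\CP^\infty[\beta^{-1}]\to\KU$ and $\Sigma^\infty_+\BU[\beta^{-1}]\to\MUP$. These are equivalences by Snaith's theorem, or by the Hahn--Yuan identification with the correct structure.

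On geometric fixed points, $\Phi^{C_2}$ is symmetric monoidal and commutes with the localization, and $\Phi^{C_2}\beta_\R$ becomes a class in degree $1$ of $\Sigma^\infty_+\RP^\infty$, resp. $\Sigma^\infty_+\BO$. Since $\Phi^{C_2}\KU_\R \simeq 0$ and $\Phi^{C_2}\MUPR\simeq \MOP$ is an $\mathbb F_2$-algebra, we must show:
\begin{itemize}
\item $\Sigma^\infty_+\RP^\infty[b^{-1}]\simeq 0$;
\item $\Sigma^\infty_+\BO[b^{-1}]\simeq \MOP$.
\end{itemize}

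\textbf{Main obstacle: the geometric fixed point computation.} We expect the key issue to be identifying $\Phi^{C_2}\beta_\R$ concretely. It should be the image of the generator of $\pi_1\RP^\infty=\Z/2$, compatible with the real analogue of Snaith. The step that fails naively is that $\pi_1\Sigma^\infty_+\RP^\infty$ also contains $\eta$.

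Our first attempt would be to compute mod-$2$ homology: inverting $b$ on $H_*(\BO;\F_2)$ gives $H_*\MOP$ via the real Snaith theorem. For $\RP^\infty$, $b$ acts nilpotently in a suitable sense, using $2b=0$ and $b^2$ relations. If homology alone is insufficient because the spectra are not bounded below, we would instead show that $\Sigma^\infty_+\RP^\infty[b^{-1}]$ is a ring in which $2$ is nilpotent and whose $H\F_2$-homology vanishes. We would then conclude via the nilpotence theorem, the same tool the abstract invokes for the norm-inverted variant.
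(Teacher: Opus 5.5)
There is a genuine gap in the step that is supposed to supply the multiplicative structure: the construction of the comparison maps. For $\MUPR$ you want an $\E_\rho$-map $\Sigma^\infty_+\BU_\R\to\MUPR$ ``via Wilson space theory''. That machinery only produces structured maps \emph{out of} $\MUR$ and $\MUPR$:
\cref{theorem: commutative maps from MWR and MWPR} gives $\E_\infty^{C_2}$-maps $\MWPR\to E$ into strongly even periodic targets, \cref{corollary: idempotents MWR and MWPR} splits $\MUPR$ off $\MWPR$, and \cref{theorem: MUR MUPR lifting result} lifts $\E_{2n}$-maps $\MUP\to\Res_e^{C_2}E$. Nothing there yields a map $\BU_\R\to\mathrm{GL}_1\MUPR$: the classifying map $\BU_\R\times\Z\to\Pic$ produces Thom spectra, not units, and $\BU_\R$ is not $\Omega^\rho$ of a Real Wilson space. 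Even Hahn--Yuan's $\E_2$-equivalence goes in the orientation direction $\MUP\to\Sigma^\infty_+\BU[\beta^{-1}]$.

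The missing idea is to turn the problem around. Prove that $\Sigma^\infty_+\BUR[\beta_\R^{-1}]$ is strongly even periodic (\cref{prop: strongly even}): Real orientability exhibits it as a retract of $(\MUR\otimes\Sigma^\infty_+\BUR)[\beta_\R^{-1}]$, where $\MUR\otimes\Sigma^\infty_+\BUR$ is strongly even by \cite{hill2022freeness}, and the gap condition of \cite{greenleesfour} survives inverting a class in degree $\rho$. Prove also that it is $\E_\infty^{C_2}$ (\cref{prop: localizations are commutative}). Then \cref{theorem: MUR MUPR lifting result} lifts Hahn--Yuan's $\E_2$-equivalence uniquely to an $\E_\rho$-map $\MUPR\to\Sigma^\infty_+\BUR[\beta_\R^{-1}]$. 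For $\KU_\R$ your direction is fine, but the $\E_\infty^{C_2}$-map $\Sigma^\infty_+\CPR^\infty\to\KU_\R$ must actually be supplied; the paper takes it from the model-level construction in \cite{schwede2026realglobalequivariantsegalbeckersplitting}. Wilson spaces play no role there, and the fact that a map is determined up to homotopy by a class in $\pi_\rho$ says nothing about $\E_\infty^{C_2}$-coherence.

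Two further steps do not go through as written.

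\emph{Localizations.} They do not automatically ``inherit the relevant algebra structure''. Both $\E_\infty^{C_2}$- and $\E_\rho$-algebras carry a norm $N_e^{C_2}\Res_e^{C_2}R\to R$, so by Hill--Hopkins you must also show that $\Nm_e^{C_2}\beta$ becomes invertible. The paper gets this from strong evenness, via $\Nm_e^{C_2}\beta=\beta_\R^2$ (\cref{corollary: Nm Res in strongly even}).

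\emph{The $\Phi^{C_2}$-check for $\BO$.} It appeals to the real Snaith theorem $\Sigma^\infty_+\BO[\eta^{-1}]\simeq\MOP$, which in the paper is deduced from the very statement you are proving (\cref{corollary: real Snaith}). In any case it requires knowing $\Phi^{C_2}$ of a map you have not constructed. Once strong evenness is established, this whole isotropy-separation analysis is unnecessary, since equivalences between strongly even $C_2$-spectra are detected on underlying spectra \cite{hillmeier2017}.

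Your $\RP^\infty$ vanishing can be repaired independently. The class $b^2$ has zero $\MU$-Hurewicz image because $\widetilde{\MU}_*(\RP^\infty)$ is concentrated in odd degrees, so $b$ is nilpotent by Devinatz--Hopkins--Smith. Note that ``$2$ nilpotent and $H\F_2$-homology zero'' alone does not force a ring spectrum to vanish, as $K(1)$ at $p=2$ shows; you would need the $\E_2$-structure.
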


Disregarding the multiplicative structures, this theorem was already established in the motivic literature by Gepner--Snaith \cite{gepnerSnaith2009motivicspectrarepresentingalgebraic} and Annala--Hoyois--Iwasa \cite{annalaIwasa2025motivicspectrauniversalityktheory, annalaHoyoisIwasa2025algebraiccobordismconnerfloydisomorphism}. Certain multiplicative matters of the motivic Snaith theorem have been studied by Röndigs--Spitzweck--\Oslash stvær \cite{roendigsOliverSpitzweck2010motivicstrict}. Furthermore, Schwede has a Real-global refinement of the $\KU$-Snaith in forthcoming work. Nonetheless, the $\E_{\rho}$-structured equivalence is novel and we demonstrate a new and versatile proof strategy. We will apply the multiplicativity in the computation of $\THR(\KU_{\R})$ and $\THR(\MUPR)$.

Our new short proof is inspired by technology that we developed in \cite{quinnZhu2026multiplicativeequivariantthomspectra}. We observe that $\KU_{\R}$ and $\MUPR$ are examples of strongly even $C_2$-spectra in the sense of Hill--Meier \cite{hillmeier2017}. These enjoy the property that equivalences are detected on underlying non-equivariant spectra. So after verifying that all of the involved $C_2$-spectra are strongly even (\cref{prop: strongly even}), it suffices to lift the non-equivariant Snaith equivalences to $C_2$-equivariant maps.

Such a program was precisely carried out in our previous work \cite{quinnZhu2026multiplicativeequivariantthomspectra}, but we must expand on it to be able to control the periodic object $\MUPR$. We realize this through a periodic version of Wilson space theory and simultaneously use this as an opportunity to improve the structured orientations that we produced in \cite{quinnZhu2026multiplicativeequivariantthomspectra}.

\subsubsection*{Structured orientations through Wilson spaces}
The key designer spectra in this story are Thom spectra on Wilson spaces. Consider the Conner--Floyd orientation $\gamma \colon \MU \to \ku$. Then, we may construct the Thom spectra (\cref{construction: MW MWP})
\begin{align*}
        \tb{\MW} &\coloneqq \Th\left(\Omega^\infty \Sigma^2\MU \xrightarrow{\Omega^\infty \Sigma^2 \gamma} 
    \Omega^\infty \Sigma^2\ku \simeq 
    \BU \xrightarrow{\ J \ } \Pic(\Sp)\right)
    \\ \tb{\MWP} &\coloneqq \Th\left(\Omega^\infty \MU \xrightarrow{\Omega^\infty \gamma} 
    \Omega^\infty \ku \simeq 
     \BUP \xrightarrow{\ J \ } \Pic(\Sp)\right),
\end{align*}
where the non-periodic version $\MW$ was first studied by Hahn--Raksit--Wilson in their work on the even filtration \cite{hahn2024motivicfiltrationtopologicalcyclic}. By showing that every even resp.~even periodic $\E_{\infty}$-ring spectrum admits an $\E_{\infty}$-algebra map from $\MW$ resp.~$\MWP$ via Thom spectrum theory (\cref{theorem: commutative maps from MW and MWP}) and splitting off $\MU$ resp.~$\MUP$ in a structured way (\cref{corollary: idempotents MW and MWP}), we are able to produce structured orientations:

\begin{mainthm}[\cref{theorem: E4 or E6 orientations}]
    Let $E$ be an $\E_{\infty}$-ring spectrum.
    \begin{enumerate}[(i)]
        \item If $E$ is even, then there exists an $\E_4$-map $\MU \to E$.
        \item If $E$ is even periodic, then there exists an $\E_6$-map $\MUP \to E$.
    \end{enumerate}
    In particular, any even periodic $\E_{\infty}$-ring admits an $\E_6$-$\MU$-orientation.
\end{mainthm}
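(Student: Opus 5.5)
The argument has three steps. First, produce an $\E_\infty$-map out of $\MW$ (resp.~$\MWP$) into $E$ (\cref{theorem: commutative maps from MW and MWP}). Second, split $\MU$ (resp.~$\MUP$) off $\MW$ (resp.~$\MWP$) as a structured retract (\cref{corollary: idempotents MW and MWP}). Third, compose the inclusion of the retract with the map to $E$.

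\textbf{Step 1: the map $\MW \to E$.} By the Thom spectrum universal property, an $\E_\infty$-map $\MW \to E$ is the same as a null-homotopy of the composite of infinite loop maps
\[
\Omega^\infty\Sigma^2\MU \to \BU \xrightarrow{J} \Pic(\Sp) \to \Pic(E).
\]
Equivalently, it is a spectrum map $\Sigma^2 \MU \to \Sigma^{-1}\mathrm{pic}(E)$ restricted appropriately. For even $E$, the composite $\BU \to \Pic(E)$ corresponds to the complex orientability data of $E$. The obstructions to trivialising it on the Wilson space $\Omega^\infty\Sigma^2\MU$ lie in odd-degree groups. These vanish because $\Omega^\infty\Sigma^2\MU$ is an even space in Wilson's sense, since $\MU$ is even, and $E$ is even. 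The periodic case is analogous: $\Omega^\infty\MU$ is even, and periodicity of $E$ gives the extra $\Z$-component, corresponding to $\BUP$ instead of $\BU$. This step is taken from \cref{theorem: commutative maps from MW and MWP}.

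\textbf{Step 2: the structured retraction.} I would invoke \cref{corollary: idempotents MW and MWP}. As I expect it to be set up, it exhibits $\MU$ as a retract of $\MW$ via an idempotent that is structured enough for the inclusion $\MU \to \MW$ to be an $\E_4$-map, and likewise $\MUP \to \MWP$ to be an $\E_6$-map. The mechanism is as follows. There is a map of $\E_n$-spaces $\BU \to \Omega^\infty\Sigma^2\MU$ over $\BU$, namely a section of $\Omega^\infty\Sigma^2\gamma$. Such a section exists because Wilson spaces split as products of Eilenberg–MacLane-like even pieces, which gives a splitting at the level of $\Omega^2$-loop spaces or better. Taking Thom spectra of this section yields the structured map $\MU = \Th(\BU \to \Pic) \to \MW$.

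The number of loopings determines the operadic level. Splitting $\Omega^\infty\Sigma^2\MU \simeq \BU \times W'$ as $\Omega^k$-spaces requires a $k$-fold deloopable section. For $\BU$, Bott periodicity allows delooping twice more relative to the base; in the periodic case $\BUP = \Omega^\infty \ku$ has extra deloopings. I expect this to explain the gap between $\E_4$ and $\E_6$.

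\textbf{Step 3: composition.} Composing the $\E_4$-map $\MU \to \MW$ (resp.~the $\E_6$-map $\MUP \to \MWP$) with the $\E_\infty$-map from Step 1 gives the desired $\E_4$-map $\MU \to E$ (resp.~$\E_6$-map $\MUP \to E$). For the final assertion, precompose with the $\E_\infty$-map $\MU \to \MUP$, the inclusion of the degree-zero component. An $\E_\infty$-map restricts to an $\E_6$-map, so the composite $\MU \to \MUP \to E$ is an $\E_6$-$\MU$-orientation. Since even periodic rings are in particular even, this strictly improves part (i).

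\textbf{Main obstacle.} The main difficulty is Step 2: determining exactly how highly structured the section $\BU \to \Omega^\infty\Sigma^2\MU$ (resp.~$\BUP \to \Omega^\infty\MU$) can be chosen. I would try to build it as a $k$-fold loop map by analysing the fibre of $\Omega^\infty\Sigma^2\gamma$ via Wilson's splitting. The bounds $4$ and $6$ would come from the connectivity at which the odd-degree obstructions to delooping the section first appear.
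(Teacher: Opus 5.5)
Your three steps are exactly the paper's proof. The theorem is the composite $\MU\to\MW\to E$ (resp.\ $\MUP\to\MWP\to E$) of the retract inclusion from \cref{corollary: idempotents MW and MWP} with the $\E_\infty$-map from \cref{theorem: commutative maps from MW and MWP}, precomposed with the $\E_\infty$-map $\MU\to\MUP$ for the last sentence. Taking those two results as given, your argument is complete.

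What is off is your account of Step 2, which you single out as the main obstacle: the mechanism you sketch would not produce the structured section. Wilson spaces are not products of Eilenberg--MacLane pieces. More to the point, a product decomposition of $\Omega^\infty\Sigma^2\MU$ as a space carries no loop structure. The missing idea is to solve one unstructured lifting problem at a delooped level and then loop down. \cref{lemma: existence of lifts} gives a pointed section of $\Omega^\infty\Sigma^6\gamma\colon\Omega^\infty\Sigma^6\MU\to\BU\langle 6\rangle$, because $\cofib(\gamma)$ is odd (this is where surjectivity of $\pi_*\gamma$ is used) and $H^*(\BU\langle 6\rangle;A)$ is even for every finitely generated $A$ by Stong and Singer. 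Applying $\Omega^4$ yields an $\E_4$-section of $\Omega^\infty\Sigma^2\gamma$ over $\BU\simeq\Omega^4\BU\langle 6\rangle$. Applying $\Omega^6$ yields an $\E_6$-section of $\Omega^\infty\gamma$ over $\BUP\simeq\Omega^6\BU\langle 6\rangle$. Thomifying these sections gives the splittings of \cref{proposition: decomposition of MW and MWP}. So the gap between $4$ and $6$ is simply the two extra loops $\BUP\simeq\Omega^2\BU$ applied to one and the same section. The method stops at $6$ because further deloopings such as $\BU\langle 8\rangle$ no longer have even cohomology.

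A smaller point on Step 1: for $\E_\infty$-maps the relevant evenness is that of the spectrum $\MU$, not of the Wilson space. In the periodic case the obstruction lies in $\pi_0\map_{\Sp}(\MU,\Sigma\gl_1E)$, which vanishes since $\Sigma\gl_1 E$ is odd. The periodic case also needs one extra observation: the map $\pi_0\ku\to\pi_0\piccn(E)$, $1\mapsto\Sigma^2E$, is trivial by periodicity, so the composite factors through $\Sigma\gl_1E$ in the first place.
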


It is possible to provide such orientations directly with a similar argument without adhering to Wilson space theory. We are going this route since the proof ingredients are necessary to study a $C_2$-equivariant variant. 

Indeed, after defining Real variants $\MWR$ and $\MWPR$ we are able to use lifting results from  \cite{quinnZhu2026multiplicativeequivariantthomspectra}, to obtain an analogous result in the $C_2$-equivariant setting. This allows us to apply structured Thom isomorphisms and is the key ingredient to obtain full control on structured Real orientations \cite[Theorem F]{quinnZhu2026multiplicativeequivariantthomspectra}. 

The suitable Real generalization of evenness is Hill--Meier's notion of strongly even $C_2$-spectra \cite[Definition 3.1]{hillmeier2017}. If a strongly even $E \in \Alg_{\E_{\infty}}(\Sp^{C_2})$ moreover satisfies $\Sigma^{\rho} E \simeq E$ as $E$-modules, then we call it strongly even periodic (\cref{definition: strongly even periodic}). 

We are able to improve on results in \cite{quinnZhu2026multiplicativeequivariantthomspectra} and obtain:

\begin{mainthm}[\cref{theorem: MUR MUPR lifting result}] \label{mainthm: lifting}
    Let $E$ be an $\E_{\infty}^{C_2}$-ring spectrum.
    \begin{enumerate}[(i)]
        \item If $E$ is strongly even, then any $\E_{2n}$-map $\MU \to \Res_e^{C_2} E$ lifts uniquely to an $\E_{n\rho}$-map $\MUR \to E$ for $n \leq 2$.
        \item If $E$ is strongly even periodic, then any $\E_{2n}$-map $\MUP \to \Res_e^{C_2} E$ lifts uniquely to an $\E_{n\rho}$-map $\MUPR \to E$ for $n \leq 3$.
    \end{enumerate}
\end{mainthm}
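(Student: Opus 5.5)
The plan is to reduce the equivariant lifting problem to a non-equivariant one through Real Wilson space Thom spectra, and then to use strong evenness to get uniqueness and the correct restriction. I would first build the Real analogues $\MWR$ and $\MWPR$ as Thom spectra of the Real $J$-homomorphism $\BU_{\R} \to \Pic(\Sp^{C_2})$. For $\MWR$ it is precomposed with $\Omega^{\rho}\Sigma^{\rho}$ of the Real Conner--Floyd map $\MUR \to \kuR$, and for $\MWPR$ with the periodic version landing in $\BUP_{\R}$. Since these are $C_2$-infinite loop maps, the Thom spectra are $\E_\infty^{C_2}$-rings. Equivariant Thom spectrum theory then identifies $\E_\infty^{C_2}$-maps $\MWR \to E$ with nullhomotopies of the composite into $\Pic(E)$. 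I would construct such a nullhomotopy whenever $E$ is strongly even, respectively strongly even periodic for $\MWPR$. The Real Conner--Floyd orientation of $E$ is determined on underlying spectra by the even/Real-orientability discussion. The obstruction groups to extending it sit in $\pi_{\star}$ of $E$ in degrees $k\rho-1$, and these vanish by strong evenness (\cref{prop: strongly even}). This is the step that uses the lifting results of \cite{quinnZhu2026multiplicativeequivariantthomspectra}.

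Next I would establish the structured splitting. The equivariant version of \cref{corollary: idempotents MW and MWP} should exhibit $\MUR$ as a retract of $\MWR$ through an $\E_{2\rho}$-idempotent, and $\MUPR$ as a retract of $\MWPR$ through an $\E_{3\rho}$-idempotent. These equivariant idempotents lift the non-equivariant $\E_4$- and $\E_6$-idempotents. In each case the weight $n$ in $\E_{n\rho}$ is half the non-equivariant $\E_{2n}$, which is why the bounds are $n\le 2$ and $n\le 3$. The claim is that $\E_{n\rho}$-maps $\MUR \to E$ correspond to $\E_{n\rho}$-maps that factor through the retraction. Given an $\E_{2n}$-map $f\colon \MU \to \Res E$, the non-equivariant theory (\cref{theorem: E4 or E6 orientations}) writes $f$ as the $\E_\infty$-map $\MW \to \Res E$ restricted along the $\E_{2n}$-section $\MU \to \MW$. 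I would lift the $\E_\infty$ part using the previous paragraph and compose it with the Real section $\MUR \to \MWR$. This gives an $\E_{n\rho}$-map $\MUR \to E$ whose underlying map is $f$.

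The remaining issue is uniqueness, which also fixes the lift agreeing with $f$ up to the right homotopy. I would show that restriction
\[ \Map_{\E_{n\rho}}(\MUR, E) \to \Map_{\E_{2n}}(\MU, \Res_e^{C_2} E) \]
is an equivalence. The approach is to use the cell structure of $\MUR$ as an $\E_{n\rho}$-algebra, or alternatively of $\MUR \otimes \MUR$. Strong evenness gives that the relevant Bredon and Goerss--Hopkins obstruction groups are detected on underlying homotopy, because $\underline{\pi}_{k\rho-1}$ and $\underline{\pi}_{k\rho-2}$ of $E$-modules built from slice cells vanish and $\underline{\pi}_{k\rho}$ is a constant Mackey functor. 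I expect this comparison of mapping spaces to be the main obstacle, especially matching the $\E_{n\rho}$ and $\E_{2n}$ cotangent complexes through the slice filtration. My first attempt would be to reduce it to the case of free $\E_{n\rho}$-algebras on $S^{k\rho}$-cells, where it becomes the strong evenness statement for $E$.
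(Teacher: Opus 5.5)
\paragraph{A gap in existence.} As written, your existence argument does not go through. \cref{theorem: E4 or E6 orientations} only says that \emph{some} $\E_{2n}$-orientation arises as $\MU\xrightarrow{s}\MW\to E$ with an $\E_\infty$ second map. It does not say that a \emph{given} $\E_{2n}$-map $f\colon\MU\to\Res_e^{C_2}E$ has this form. Under the Thom isomorphism, the maps of the form $\phi\circ s$ correspond only to those $2n$-fold loop maps $\BU\to\mathrm{GL}_1E$ that extend along the section to infinite loop maps $\Omega^\infty\Sigma^2\MU\to\mathrm{GL}_1E$. That is a proper subset in general. What you can actually write is $f\simeq(f\circ r)\circ s$, where $f\circ r$ is only an $\E_{2n}$-map.

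Even for an $\E_\infty$-map $\MW\to\Res_e^{C_2}E$, your first paragraph only produces \emph{some} $\E_\infty^{C_2}$-map $\MWR\to E$ (a nullhomotopy exists), with no control over its underlying map. Lifting a prescribed $\E_\infty$-map is not something the available machinery does. So your construction gives an $\E_{n\rho}$-orientation of $E$, not a lift of $f$, and the whole theorem is pushed into your third paragraph.

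Two smaller points on that first step. For $\MWPR$, vanishing of $\pi^{C_2}_{k\rho-1}E$ is not enough: you first have to show that the composite into the Picard spectrum factors through $\Sigma\gl_1E$. On $C_2$-fixed points this means $1\in\pi_0\ko$ maps to $[\S^\rho\otimes E]$, which is trivial precisely because $\Sigma^\rho E\simeq E$. Also, \cref{prop: strongly even} concerns $\Sigma^\infty_+\BUR[\beta_\R^{-1}]$, not a general $E$.

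\paragraph{The missing idea: the structured Thom isomorphism.} Your third paragraph, a cell-by-cell obstruction theory for $\E_{n\rho}$-maps out of $\MUR$, is left open. The paper instead feeds the $\E_\infty^{C_2}$-orientations of \cref{theorem: commutative maps from MWR and MWPR} and the splittings of \cref{corollary: idempotents MWR and MWPR} into the Thom-spectrum lifting theorem \cref{theorem: chili thom lifting}, as in \cite[Theorem 6.2.1]{quinnZhu2026multiplicativeequivariantthomspectra}. The mechanism is as follows.
\begin{enumerate}[(a)]
\item Once one $\E_{n\rho}$-map out of a Thom spectrum exists, all of them form a torsor under $n\rho$-fold loop maps from the base into $\mathrm{GL}_1E$. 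The structured lifting problem then becomes an unstructured one for $\mathrm{B}^{n\rho}$ of the base, which strong evenness of $E$ solves once that space has $\rho$-free $\uHZ$-homology. So the $\E_\infty^{C_2}$-map $\MWR\to E$ is the base point for this torsor, not the source of the lift.
\item For the Wilson Thom spectra, the relevant deloopings are Real Wilson spaces $\Omega^\infty\Sigma^{k\rho}\MUR$, which is where the needed homology is available.
\item The $\E_{n\rho}$-maps $s_\R\colon\MUR\to\MWR$ and $r_\R\colon\MWR\to\MUR$ with $r_\R s_\R\simeq\id$ exhibit $\Map_{\E_{n\rho}}(\MUR,E)\to\Map_{\E_{2n}}(\MU,\Res_e^{C_2}E)$ as a retract of the corresponding map for $\MWR$. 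This gives existence and uniqueness at once, and likewise for $\MUPR$ and $\MWPR$.
\end{enumerate}

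If you instead run your obstruction theory on $\MUR$ itself as an $\E_{2\rho}$-algebra (or on $\MUPR$ as an $\E_{3\rho}$-algebra), you need a $\rho$-cell structure on $\mathrm{B}^{2\rho}\BUR\simeq\Omega^\infty\Sigma^{3\rho}\kuR$. That is exactly the Bredon homology computation this paper avoids (\cref{remark: chqs}). You also leave unexplained how the equivariant $\E_{2\rho}$- and $\E_{3\rho}$-idempotents are produced. In the paper they come from lifting the non-equivariant ones with this same machinery, with the $\E_\infty^{C_2}$-orientation as input.
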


The periodic version in (ii) is new and previously we had only established (i) for $n \leq 1$ in \cite{quinnZhu2026multiplicativeequivariantthomspectra}. This result produces structured lifts of many orientations of interest. By formal means we can furthermore produce versions for larger groups $G \geq C_2$. Necessary for such a procedure is the functor $\Coind_{C_2}^G \colon \Op_{C_2, \infty} \to \Op_{G, \infty}$ from the $\infty$-category of $C_2$-$\infty$-operads to the $\infty$-category of $G$-$\infty$-operads, which is the right adjoint of $\Res_{C_2}^G \colon \Op_{G, \infty} \to \Op_{C_2, \infty}$.

We write $\MUG \coloneqq N_{C_2}^G \MUR,  \MUP^{( \! ( G ) \! )} \coloneqq N_{C_2}^G \MUPR$ and $\BPG \coloneqq N_{C_2}^G \BPR$. Let us state a number of examples in the general version for larger groups. 

\begin{mainthm}[\cref{theorem: BPR E2rho}, \cref{corollary: higher group structure examples}]
    Let $G \geq C_2$ be a finite group. Let $n \geq 1$.
    \begin{enumerate}[(i)]
        \item Suppose that $G \leq \mathbb{G}_n$ is a finite subgroup of the extended Morava stabilizer group. There exist $\Coind_{C_2}^G \E_{3\rho}$-algebra maps $\MUP^{( \! ( G ) \! )} \to E_n$ to Lubin--Tate theory.
        \item Let $G = (\Z/n)^{\times}$. Then, the Hirzebruch level-$n$-genera $\MUR \to \tmf_1(n)$ induce $\Coind_{C_2}^G \E_{2\rho}$-algebra maps $\MU^{( \! ( G ) \! )} \to \tmf_1(n)$.
        \item The $G$-spectrum $\BP^{( \! ( G ) \! )}$ admits a $\Coind_{C_2}^G \E_{2\rho}$-algebra structure.
    \end{enumerate}
    In particular, the Real Brown--Peterson spectrum $\BPR$ admits an $\E_{2\rho}$-algebra structure.
\end{mainthm}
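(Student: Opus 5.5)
The plan is to deduce all three parts from \cref{mainthm: lifting}, together with the $\E_4$/$\E_6$ orientations of the preceding theorem, and then transport them to $G$ by the adjunction $\Res_{C_2}^G \dashv \Coind_{C_2}^G$ on operads combined with the norm.

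\textbf{The norm step.} I would first record a formal observation. For a $G$-$\E_\infty$-ring $R$ (an $\E_\infty^G$-algebra), giving a $\Coind_{C_2}^G\O$-algebra map $N_{C_2}^G A \to R$ is equivalent to giving an $\O$-algebra map $A \to \Res^G_{C_2} R$.

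\begin{itemize}
\item Since $N_{C_2}^G$ is left adjoint to restriction on $C_2$-$\E_\infty$-rings, one must check that for $\O$-algebras the norm carries a $\Coind_{C_2}^G\O$-algebra structure.
\item This should follow because $\Coind_{C_2}^G$ is right adjoint to restriction of operads. Hence algebras over $\Coind_{C_2}^G \O$ in $\Sp^G$ correspond to $\O$-algebras in $\Res_{C_2}^G\Sp^G$ under the multiplicative induction.
\item I would invoke the operadic norm results (Bonventre--Guillou--Stapleton style) presumably set up later in the paper.
\end{itemize}

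Granting this, each part reduces to producing the relevant $C_2$-map.

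\textbf{Part (i).} Lubin--Tate theory $E_n$ with its $\mathbb G_n$-action, restricted to $G$, is a $G$-$\E_\infty$-ring via Goerss--Hopkins--Miller. Its restriction to $C_2$, where $C_2\le G$ acts by formal inversion, is Real-oriented and strongly even periodic by Hahn--Shi. The underlying $E_n$ is even periodic $\E_\infty$, so the preceding theorem gives an $\E_6$-map $\MUP \to E_n$. Then \cref{mainthm: lifting}(ii) with $n=3$ lifts it to an $\E_{3\rho}$-map $\MUPR \to \Res^G_{C_2}E_n$, and the norm step finishes.

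\textbf{Part (ii).} The same pattern applies. I would use that $\tmf_1(n)$, with its $(\Z/n)^\times$-action, is a $G$-$\E_\infty$-ring whose $C_2$-restriction is strongly even (Hill--Meier). The Hirzebruch genus is a priori only a ring map. To upgrade it, I would note that $\tmf_1(n)$ is even $\E_\infty$, so the preceding theorem yields some $\E_4$-map $\MU\to\tmf_1(n)$. The key point is that $\E_4$ orientations are classified, via the Wilson space/Thom approach, so that one can realize any prescribed orientation with the right formal group law coordinate. This matching of the specific Hirzebruch genus is where I expect the main obstacle. I would try to show that the space of $\E_4$-orientations surjects onto complex orientations, using the splitting of $\MW$ from \cref{corollary: idempotents MW and MWP}. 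After that, \cref{mainthm: lifting}(i) with $n=2$ and the norm step finish.

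\textbf{Part (iii).} Here $\BPG$ is not the target of a map but an algebra itself. The plan is to construct $\BPR$ as an $\E_{2\rho}$-algebra:

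\begin{itemize}
\item Take an $\E_\infty$ model of a $\BP$-like even ring to which $\MU$ maps. For instance, use Hahn--Wilson's $\E_3$ $\BP$, or better a $C_2$-$\E_\infty$ strongly even target such as a suitable completion or a Lubin--Tate-type product.
\item Realize $\BPR$ as a retract/idempotent of $\MUR$ via a $C_2$-$\E_\infty$ strongly even ring $E$ with an $\E_{2\rho}$-map $\MUR \to E$ whose image on homotopy is exactly $\pi_{*\rho}\BPR$. Alternatively, use the $\E_{2\rho}$-Quillen idempotent obtained by lifting the $\E_4$ Quillen idempotent with \cref{mainthm: lifting}(i).
\item Then $\BPR$ is the localization at this idempotent, which inherits the $\E_{2\rho}$ structure. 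Finally, apply $N_{C_2}^G$; the norm of an $\O$-algebra is a $\Coind_{C_2}^G\O$-algebra by the observation above.
\end{itemize}

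Uniqueness in \cref{mainthm: lifting} ensures the idempotent lifts coherently.
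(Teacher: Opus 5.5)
Your architecture matches the paper's: produce a non-equivariant $\E_{2n}$-map, lift it with \cref{theorem: MUR MUPR lifting result}, then pass to $G$ by the formal $\Coind_{C_2}^G$/norm argument. Part (i) goes through as you describe. The first gap is the non-equivariant input for (ii). The statement concerns the \emph{specific} Hirzebruch level-$n$ genus, but the Wilson space machinery gives no control over which $\E_4$-orientation it produces. Obstruction theory only yields \emph{some} $\E_\infty$-map $\MW \to \tmf_1(n)$, and composing with the $\E_4$-section $\MU \to \MW$ gives \emph{some} $\E_4$-orientation. Your proposed remedy, that $\E_4$-orientations surject onto complex orientations, cannot be obtained this way. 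As the paper notes, Chadwick--Mandell's $\E_2$ lifting result fails for $\E_4$. Moreover, your argument uses nothing particular to $\tmf_1(n)$, so it would equally show that the Quillen idempotent of $\MU_{(p)}$ is $\E_4$, which is open. The missing ingredient is Senger's theorem that the Hirzebruch level-$n$ genus refines to an $\E_\infty$-map $\MU \to \tmf_1(n)$. Lifting that map gives the $\E_{2\rho}$-map, and its underlying $C_2$-map agrees with Meier's by uniqueness of lifts.

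Part (iii) has the same kind of problem plus one more. No ``$\E_4$ Quillen idempotent'' is available. The correct input is Basterra--Mandell's $\E_4$-idempotent $e_0$ on $\MU_{(2)}$ splitting off $\BP$, which is not known to be Quillen's. Your first route, an $\E_{2\rho}$-map $\MUR \to E$ whose image on homotopy is $\pi_{*\rho}\BPR$, does not put an algebra structure on any spectrum. Lifting $e_0$ gives an $\E_{2\rho}$-map $e \colon \MU_{\R(2)} \to \MU_{\R(2)}$. Its telescope, formed in $\E_{2\rho}$-algebras, is \emph{not} $\BPR$ by definition, because $e$ is not known to be the Real Quillen idempotent. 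So you still have to identify it. The paper observes that the telescope is strongly even, as a filtered colimit of strongly even $C_2$-spectra, and has underlying spectrum $\BP$. Hence it is Real Landweber exact by Hill--Meier. It then invokes Hill--Meier's uniqueness of strongly even, Real Landweber exact $C_2$-spectra with prescribed underlying spectrum. Uniqueness in the lifting theorem only tells you that $e$ is idempotent; it does not give this identification.
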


In \cite{quinnZhu2026multiplicativeequivariantthomspectra} we already obtained versions of these orientations with a $\Coind_{C_2}^G \E_{\rho}$-structure. There, we relied on Chadwick--Mandell's result saying that every complex orientation lifts uniquely to an $\E_2$-complex orientation \cite{chadwickmandell}. This allowed us to immediately apply the lifting result. On the other hand, not every complex orientation lifts to an $\E_4$-complex orientation, e.g.~it is still not known whether the Quillen idempotent $\MU \to \MU$ lifts to an $\E_4$-algebra map. As such, we need to input strong non-equivariant theorems first to obtain specific $\E_4$-lifts before lifting them to the equivariant setting via \cref{mainthm: lifting}.

For example, the $\E_{2\rho}$-algebra structure on $\BPR$ is obtained by lifting the $\E_4$-idempotent on $\MU_{(2)}$ provided by Basterra--Mandell \cite{basterraMandell2013BP}. Here, we want to clarify that the $\E_{2\rho}$-algebra structure on $\BPR$ is also established in concurrent work by Carrick--Hill--Stewart and the authors, which will appear in forthcoming work (\cref{remark: chqs}). Therefore, we provide another proof of this result. 

\subsubsection*{Real Snaith equivalences \& Real topological Hochschild homology}

By writing down a lift of the $\E_{\infty}$-Snaith equivalence $\KU \simeq \Sigma_+^{\infty} \CP^{\infty}[\beta^{-1}]$ and by lifting the $\E_2$-Snaith equivalence $\MUP \to \Sigma_+^{\infty}\BU[\beta^{-1}]$ from Hahn--Yuan, we thus prove the structured Real Snaith equivalences with the outline suggested after \cref{mainthm: Real Snaith}. So we prove
\[ \KU_{\R} \simeq \Sigma_+^{\infty} \CPR^{\infty}[\beta_{\R}^{-1}] \quad \text{and} \quad \MUP_{\R} \simeq \Sigma_+^{\infty} \BU_{\R}[\beta_{\R}^{-1}] \]
of $\E_{\infty}^{C_2}$-algebras resp.~of $\E_{\rho}$-algebras.

Our main application of $C_2$-equivariant multiplicative structures lies in Real trace theory \cite{hesselholtMadsen2015Realalgebra, dottoMoiPatchkoriaReeh2021THR}. A $C_2$-equivariant refinement of topological Hochschild homology is Real topological Hochschild homology defined by 
\[ \THR(R) \coloneqq R \otimes_{N_e^{C_2} \Res_e^{C_2} R} R \] 
for $C_2$-ring spectra $R$ with $N_e^{C_2}\Res_e^{C_2} R$-$N_e^{C_2}\Res_e^{C_2}R$-bimodule structures, which is e.g.~provided by an $\E_{\sigma}$-algebra structure. Here, $\sigma$ is the $C_2$-sign representation. This is the entry point in approaching Real algebraic $K$-theory, a $C_2$-equivariant refinement of algebraic $K$-theory that furthermore encodes the information of Hermitian $K$-theory \cite{hesselholtMadsen2015Realalgebra, 9authors2023hermitian1, 9authors2025hermitian2, gabe2025realsyntomiccohomology, 9authors2026hermitianktheorystableinftycategories}.

A central computation in the theory of $\THH$ is Bökstedt's theorem, i.e.~the computation of $\THH(\F_p)$, which can be stated as $\THH(\F_p) \simeq \F_p \otimes \Sigma_+^{\infty} \Omega S^3$. As such, a similar level of importance should be given to its Real analog $\THR(\ul{\F}_p) \simeq  \ul{\F}_p \otimes \Sigma_+^{\infty} \Omega S^{1+\rho}$, which was proven by Dotto--Moi--Patchkoria--Reeh \cite{dottoMoiPatchkoriaReeh2021THR}. Using the same strategy as for the Real Snaith theorems, we prove a more structured version of the Real Bökstedt theorem by lifting an $\E_2$-equivalence by Bayındır--Moulinos \cite{bayindirMoulinos2022KTHHFp}.

\begin{mainthm}[\cref{theorem: structured Real Boekstedt}]
There is an equivalence $\THR(\ul{\F}_p) \simeq \ul{\F}_p \otimes \Sigma_+^{\infty}\Omega S^{1+\rho}$ of $\E_{\rho}$-algebras.
\end{mainthm}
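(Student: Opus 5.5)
We follow the strategy used for the Real Snaith equivalences. First we construct an $\E_{\rho}$-algebra map between the two sides. Then we check that it is an equivalence on underlying spectra. Finally, strong evenness (or an analogous detection principle) upgrades this to a $C_2$-equivalence.

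\textbf{Step 1: the $\E_\rho$-algebra structures.} Since $\ul{\F}_p$ is an $\E_\infty^{C_2}$-ring, $N_e^{C_2}\Res_e^{C_2}\ul{\F}_p \to \ul{\F}_p$ is an $\E_\infty^{C_2}$-map. Hence $\THR(\ul{\F}_p) = \ul{\F}_p\otimes_{N_e^{C_2}\F_p}\ul{\F}_p$ is an $\E_\infty^{C_2}$-$\ul{\F}_p$-algebra, and in particular an $\E_\rho$-$\ul{\F}_p$-algebra. The space $\Omega S^{1+\rho} = \Omega^{\sigma}S^{2+\sigma}$ is a $\sigma$-fold loop space, so it is only an $\E_\sigma$-space. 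The target $\ul{\F}_p\otimes\Sigma^\infty_+\Omega S^{1+\rho}$ should therefore be read as the free $\E_\rho$-$\ul{\F}_p$-algebra on a suitable class, or as a Thom-type construction. Concretely, I would present it as $\ul{\F}_p\otimes \Sigma^\infty_+\Omega^\rho S^{\rho+1}$ modulo the identification used by Dotto--Moi--Patchkoria--Reeh. Alternatively, following Bayındır--Moulinos, I would realize it as the free $\E_\rho$-$\ul{\F}_p$-algebra on one generator in degree $\rho$, denoted $\mathrm{Free}_{\E_\rho}(\Sigma^{\rho}\ul{\F}_p)$, suitably truncated. Non-equivariantly, Bayındır--Moulinos show that $\THH(\F_p)$ is the free $\E_2$-$\F_p$-algebra on a degree-$2$ class subject to the relation $x^p$ or $Q$-operation behaviour. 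I would set up the equivariant object so that its underlying $\E_2$-algebra is exactly theirs.

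\textbf{Step 2: constructing the map.} A map of $\E_\rho$-$\ul{\F}_p$-algebras out of a free or Thom-type object is determined by a class in $\pi_\rho\THR(\ul{\F}_p)$, or by a nullhomotopy of an $\E_\rho$-map out of a Thom spectrum. I would choose the Real Bökstedt class $\bar\sigma\in\pi_\rho\THR(\ul{\F}_p)$ lifting the classical Bökstedt generator, which exists by DMPR. The key input is a lifting statement in the style of \cref{mainthm: lifting}: an $\E_2$-map between the underlying objects lifts uniquely to an $\E_\rho$-map as long as the source is built from cells in degrees $k\rho$ and the target is strongly even or has the appropriate purity. For this I would use the Thom spectrum description $\ul{\F}_p\simeq \Th(\Omega^\rho S^{\rho+1}\to\Pic)$ of Behrens--Wilson/Hahn--Wilson. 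That description exhibits $\ul{\F}_p$ as an $\E_\rho$-Thom spectrum. Computing $\THR$ of this Thom spectrum then gives $\ul{\F}_p\otimes\Sigma^\infty_+ B^\sigma\Omega^\rho S^{\rho+1}\simeq \ul{\F}_p\otimes\Sigma^\infty_+\Omega S^{1+\rho}$, with multiplicative structure inherited from the Thom construction. This is the $C_2$-analogue of Blumberg's THH of Thom spectra, applied equivariantly.

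\textbf{Step 3: the equivalence check.} Underlying, the map recovers the Bayındır--Moulinos $\E_2$-equivalence, so it is an underlying equivalence. To conclude it is a $C_2$-equivalence I would check geometric fixed points. Alternatively, I would use that both sides are $\ul{\F}_p$-modules whose homotopy Mackey functors are concentrated in degrees $k\rho$ (pure/strongly even type), and for such modules equivalences are detected underlying, in analogy with \cref{prop: strongly even}. The main obstacle is this detection step. $\ul{\F}_p$-modules of this form are not strongly even in the Hill--Meier sense for odd $p$. For $p=2$ one needs the slice or regular-cellularity argument instead. I would first try to show that both sides are free $\ul{\F}_p$-modules on cells $S^{k\rho}$ (known for the target, and for $\THR$ by DMPR). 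Given that, it suffices that the map sends a basis to a basis. That can be verified on the underlying level, because $\pi_{k\rho}$ of $\Sigma^{k\rho}\ul{\F}_p$ restricts isomorphically to the underlying group.
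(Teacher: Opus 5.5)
There is a genuine gap in Steps 1--2: you never produce an $\E_\rho$-algebra map between the two objects in the statement. Your premise that $\Omega S^{1+\rho}$ is ``only an $\E_\sigma$-space'' is wrong. $\Omega S^{1+\rho}$ is the loop space in the trivial direction, $\Omega S^{2+\sigma}$, not $\Omega^{\sigma}S^{2+\sigma}$. The error also hides the fact the argument rests on: $S^{1+\rho}\simeq\Omega^{\sigma}\HP^\infty_\R$, with $C_2$ acting on $\HP^\infty$ by conjugation by $i$. Hence $\Omega S^{1+\rho}\simeq\Omega^{\rho}\HP^\infty_\R$ is an $\E_\rho$-space, and $\ul{\F}_p\otimes\Sigma^\infty_+\Omega S^{1+\rho}$ is honestly an $\E_\rho$-$\ul{\F}_p$-algebra, so nothing needs reinterpreting.

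Your substitutes do not work:
\begin{itemize}
\item The free $\E_\rho$-$\ul{\F}_p$-algebra on $\Sigma^\rho\ul{\F}_p$ is a different object. Its underlying spectrum is the free $\E_2$-$\F_p$-algebra on a degree-$2$ class, with homology $H_*(\Omega^2S^4;\F_p)$, which is far larger than $H_*(\Omega S^3;\F_p)$. ``Suitably truncated'' does not define an $\E_\rho$-algebra.
\item The Thom-spectrum route, in the Behrens--Wilson/Hahn--Wilson form you cite, is a $p=2$ statement. It also needs an equivariant Blumberg-type formula for $\THR$ of Thom spectra, which (as the paper remarks) is conditional on an unrecorded $C_2$-symmetric monoidal straightening.
\item Even granting all that, the Thom route only sees the target as $B^{\sigma}\Omega^{\rho}S^{\rho+1}$. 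That carries only the residual $\E_1$-structure left over from $\rho=1+\sigma$. So it could at best give an $\E_1$-equivalence (compare the $\E_1$ conclusion for $\THR(\MUPR)$ in \cref{theorem: THR KUR}), not an $\E_\rho$-equivalence.
\end{itemize}

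The missing idea is to work with the delooping $\HP^\infty_\R$. By adjunction, $\E_\rho$-$\ul{\F}_p$-algebra maps $\ul{\F}_p\otimes\Sigma^\infty_+\Omega^\rho\HP^\infty_\R\to\THR(\ul{\F}_p)$ are classified by maps of $C_2$-spectra out of $\Sigma^\infty\HP^\infty_\R$ into (a shift of) $\gl_1\THR(\ul{\F}_p)$. The space $\HP^\infty_\R$ has a $C_2$-CW structure with one $2n\rho$-cell for each $n\ge 0$ (Hopkins, Hahn--Shi), hence finite-type $\rho$-free homology. That is exactly what is needed to lift the classifying map of the Bay\i nd\i r--Moulinos $\E_2$-equivalence uniquely to a $C_2$-equivariant one. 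It is the same mechanism as \cref{theorem: chili thom lifting} with $n=1$, $R=\ul{\F}_p$ and trivial twist. Your heuristic ``source built from $k\rho$-cells'' is aimed at the wrong space: the relevant cells are those of the $\rho$-fold delooping, not of $\Omega S^{1+\rho}$ itself.

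Your worry in Step 3 is unfounded. $\ul{\F}_p$ is strongly even for every $p$: $\ul{\pi}_0$ is constant, $\ul{\pi}_{k\rho}=0$ for $k\neq 0$, and all $\ul{\pi}_{k\rho-1}$ vanish. Strong evenness is preserved by $\Sigma^{k\rho}$ and by sums. So the Dotto--Moi--Patchkoria--Reeh equivalence of $C_2$-spectra makes both sides strongly even, and Hill--Meier's detection lemma applies directly. Your basis-to-basis argument is a hands-on version of this and is fine.
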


This demonstrates that our techniques in proving the Real Snaith theorems are versatile enough to be applicable in various frameworks.

Our main application of the Real Snaith equivalences lies in the computation of $\THR(\KU_{\R})$ and $\THR(\MUP_{\R})$.

\begin{mainthm}[\cref{theorem: THR KUR}] \label{mainthm: THR KUR and THR MUPR}
    There are equivalences 
    \[ \THR(\KU_{\R}) \simeq \KU_{\R} \otimes \Sigma_+^{\infty} \mathrm{B}^\rho \mathrm{U}_{\R}(1) \quad \text{resp.} \quad \THR \left(\MUP_{\R} \right) \simeq \MUP_{\R} \otimes \Sigma_+^{\infty} \B^{\rho}\mathrm{U}_{\R} \] of $\E_{\infty}^{C_2}$-algebras resp.~of $\E_1$-algebras.
\end{mainthm}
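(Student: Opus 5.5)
We compute $\THR$ of a group-ring-type object through the Real Snaith equivalences of \cref{mainthm: Real Snaith}, then invert the Bott class.

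\textbf{Step 1: THR of a $C_2$-group ring.} Let $X$ be $\CPR^{\infty}$ resp.\ $\BU_{\R}$. These are $\E_{\infty}$-groups in $C_2$-spaces (for $\BU_{\R}$ under $\oplus$), in fact $\rho$-fold deloopable: $\CPR^\infty \simeq \B^{\rho}\mathrm{U}_{\R}(1)\cdot$shift and $\BU_{\R} \simeq \Omega^{\rho}\B^{\rho}\BU_{\R}$. I will show that for an $\E_{\sigma}$-group (at least) $C_2$-space $M$ the Real trace of the spherical group ring is
\[ \THR(\Sigma^{\infty}_+ M) \simeq \Sigma^{\infty}_+ \B^{\sigma} M, \]
where $\B^\sigma M$ is the dihedral/Real bar construction, and that for $M$ an $\E_{\rho}$-group (e.g.\ coming from an infinite loop object) there is a splitting $\B^{\sigma}M \simeq M \times \B^{\rho}M$ (Real free loop space $L^\sigma \B^{\sigma}$ of a group-like $\E_\rho$-space splits), compatibly with multiplication. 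Concretely $\THR(\Sigma^\infty_+M) = \Sigma^\infty_+ M \otimes_{N_e^{C_2}\Sigma^\infty_+ M} \Sigma^\infty_+ M$, and $N_e^{C_2}\Sigma^\infty_+M\simeq\Sigma^\infty_+\Map(C_2,M)$, so this is $\Sigma^\infty_+$ of the homotopy pushout of $M\leftarrow \Map(C_2,M)\to M$, which for group-like commutative $M$ is $M\times \B^{\sigma}$-type delooping; identifying $\Sigma M$ in direction $\sigma$ combined with the existing $\mathbb{C}$-direction gives $\B^{\rho}\mathrm U_\R(1)$ resp.\ $\B^\rho \mathrm U_\R$ after using $\CPR^\infty = \B^{\sigma}\mathrm U_{\R}(1)$ and $\BU_\R=\B^\sigma \mathrm U_\R$. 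The structure: for $\CPR^\infty$ everything is $\E_\infty^{C_2}$ and the equivalence is of $\E_\infty^{C_2}$-algebras; for $\BU_\R$ we only keep $\E_1$ since Snaith is only $\E_\rho$ and $\THR$ of an $\E_\rho$-algebra is an $\E_1$-algebra.

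\textbf{Step 2: Base change to $R=\Sigma^\infty_+ X$.} From the splitting, $\THR(\Sigma^\infty_+X)\simeq \Sigma^\infty_+X\otimes \Sigma^\infty_+\B^{\rho}(\cdot)$ as algebras over $\Sigma^\infty_+X$.

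\textbf{Step 3: Inverting $\beta_\R$ (main obstacle).} We need $\THR(R[\beta_\R^{-1}])\simeq \THR(R)\otimes_R R[\beta_\R^{-1}]$. Non-equivariantly, $\THH$ commutes with localization at an element. Equivariantly, the bimodule structure is over $N_e^{C_2}R$, so we need that inverting $\beta_{\R}$ on $R$ also inverts the norm $N_e^{C_2}\beta$ (i.e.\ $R[\beta_\R^{-1}]$ is a localization of $N^{C_2}_e R$-modules). This is exactly the norm-inverted variant of Real Snaith announced in the abstract: show $\Sigma^\infty_+X[(N\beta)^{-1}]\simeq \Sigma^\infty_+X[\beta_\R^{-1}]$, which I would prove via the nilpotence theorem — the cofiber of $\beta_\R$-localization, after norm inversion, is detected to vanish on underlying and geometric fixed points, with the geometric fixed points handled by showing the relevant class is non-nilpotent/nilpotent via $\MU$-homology detection. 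Granting this, $\THR$ is a relative tensor product, commutes with this smashing localization, and Step 2 yields $\THR(\KU_\R)\simeq \KU_\R\otimes\Sigma^\infty_+\B^\rho\mathrm U_\R(1)$ and similarly for $\MUPR$, with the stated multiplicativity.
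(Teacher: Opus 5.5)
Your overall route is the paper's. You pass to the norm-inverted Real Snaith equivalence, which is checked on $\Phi^e$ and, via nilpotence, on $\Phi^{C_2}$. You then commute $\THR$ with inverting $\Nm_e^{C_2}$ of a class restricted from an equivariant one, and feed in the spherical group ring computation. The $\E_1$ structure comes from the $\E_\rho$-Snaith map, as in the paper. One small slip in Step 3: what needs proof is that inverting $\Nm_e^{C_2}\beta$ already inverts $\beta_{\R}$. The direction you state is automatic, since $\Nm_e^{C_2}\beta=\pm\beta_{\R}^2$ in the strongly even ring $\Sigma^\infty_+\BUR[\beta_{\R}^{-1}]$.

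What does not hold as written is the delooping bookkeeping in Step 1; the paper sidesteps this by citing Dotto--Moi--Patchkoria--Reeh. For an $\E_\infty^{C_2}$-group $M=\Omega^\infty m$ one has $\THR(\Sigma^\infty_+M)\simeq\Sigma^\infty_+(M\otimes S^\sigma)$. The tensor here, i.e.\ your pushout of $M\leftarrow\Map(C_2,M)\to M$, must be formed in $\E_\infty^{C_2}$-groups rather than in spaces, and it equals $\Omega^\infty(m\oplus\Sigma^\sigma m)\simeq M\times\B^\sigma M$. So the second factor is the $\sigma$-delooping of $M$, not $\B^\rho M$. For instance, $\B^\rho\BUR\simeq\Omega^\infty\Sigma^{2\rho}\kuR$, whereas the correct factor is $\B^\sigma\BUR\simeq\Omega^\infty\Sigma^{\rho+\sigma}\kuR$.

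Likewise $\CPR^\infty\not\simeq\B^\sigma\mathrm U_\R(1)$. With $\mathrm U_\R(1)\simeq\Omega^\infty\Sigma^\sigma\uHZ$ the circle with conjugation, $\B^\sigma\mathrm U_\R(1)\simeq\Omega^\infty\Sigma^{2\sigma}\uHZ$, whose involution acts trivially on $H_2$. By contrast $\CPR^\infty\simeq\Omega^\infty\Sigma^{\rho}\uHZ$, where conjugation acts by $-1$ on $H_2(\CP^\infty)$. The right identifications use the bar construction in the trivial direction: $\CPR^\infty\simeq\B\,\mathrm U_\R(1)$ and $\BUR\simeq\B\,\mathrm U_\R$. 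These give $\B^\sigma\CPR^\infty\simeq\B^{\rho}\mathrm U_\R(1)$ and $\B^\sigma\BUR\simeq\B^\rho\mathrm U_\R$, which is what the statement needs. With the identifications you wrote, the final formula does not follow from your Step 1, although the fix is purely a matter of indexing.
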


Non-equivariantly, such a computation using Snaith's theorem was first carried out by Stonek and Rasekh--Stonek--Valenzuela \cite{stonek2020thhku, rasekhStonekValenzuela2022thom}, who used that $\THH$ commutes with inverting elements and that the $\THH$ of spherical group rings was well-known. In fact, their methods combined with the real Snaith theorem (\cref{corollary: real Snaith}) allows us to give a new formula for $\THH(\MOP)$ in \cref{prop: THH MOP}. 

The $\THR$ of spherical group rings (with anti-involution) is also established -- however, more care is needed in commuting the localization past $\THR$. 

\begin{mainthm}[\cref{prop: THR inversion}] \label{mainthm: THR inverted}
    Let $R$ be an $\E_{\infty}^{C_2}$-ring spectrum and $V$ be a $C_2$-representation. Suppose that $\ol{x} \in \pi_V^{C_2}(R)$ and $x = \Res_e^{C_2}\ol{x} \in \pi_{|V|}^e(R)$. Then, there is an equivalence
    \[ \THR \left(R[(\Nm_e^{C_2}x)^{-1}] \right) \simeq \THR(R)[(\Nm_e^{C_2}x)^{-1}] \]
    of $\E_{\infty}^{C_2}$-algebras.
\end{mainthm}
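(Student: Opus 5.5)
We will use the model of $\THR$ as a relative tensor product. For an $\E_{\infty}^{C_2}$-ring $R$, $\THR(R)\simeq R\otimes_{N_e^{C_2}\Res_e^{C_2}R}R$ is a pushout in $\E_{\infty}^{C_2}$-algebras. The two maps $N_e^{C_2}\Res_e^{C_2}R\to R$ are both the counit of the norm--restriction adjunction for $C_2$-$\E_\infty$-rings. The twist by the involution is built into this adjunction, so both legs are $\E_{\infty}^{C_2}$-maps. Localization $R\mapsto R[y^{-1}]$ at a class $y$ in $\pi_\star^{C_2}R$ is a smashing localization in $\E_{\infty}^{C_2}$-algebras. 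It is characterised by a universal property: maps out of $R[y^{-1}]$ are maps out of $R$ sending $y$ to a unit. We will set $y=\Nm_e^{C_2}x\in\pi^{C_2}_{\Ind_e^{C_2}|V|}(R)$, where $\Nm$ means the norm of $x$ followed by the counit $N_e^{C_2}\Res_e^{C_2}R\to R$. Equivalently, $y$ is the multiplicative norm of $\Res_e^{C_2}\ol{x}$ computed via the $C_2$-$\E_\infty$ structure.

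The plan is to compare both sides as pushouts. First, $\Res_e^{C_2}(R[y^{-1}])=(\Res_e^{C_2}R)[\Res y^{-1}]$ and $\Res_e^{C_2} y = x\cdot\tau(x)$, where $\tau$ is the involution. Because $x\cdot\tau(x)$ is a unit precisely when $x$ and $\tau(x)$ are both units, this localization equals $\Res_e^{C_2}R[x^{-1},\tau(x)^{-1}]$. Second, $N_e^{C_2}$ is symmetric monoidal and preserves the relevant colimits. Hence $N_e^{C_2}(\Res R[x^{-1},\tau x^{-1}])$ is the localization of $N_e^{C_2}\Res R$ at the norm classes of $x$ and $\tau x$, which both equal $N(x)$ up to the swap. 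So it is $(N_e^{C_2}\Res_e^{C_2}R)[N(x)^{-1}]$, and $N(x)$ maps to $y$ under either counit. Third, we will use the pushout formula
\[ \THR(R[y^{-1}]) \simeq R[y^{-1}]\otimes_{(N\Res R)[N(x)^{-1}]}R[y^{-1}]. \]
Because each localization is smashing and the localizations are compatible along the legs, this collapses to $(R\otimes_{N\Res R}R)\otimes_R R[y^{-1}]=\THR(R)[y^{-1}]$. Here the $R$-algebra structure on $\THR(R)$ is the one coming from either unit, and both units send $y$ to the same class. Naturality of all the pushouts then makes this an equivalence of $\E_{\infty}^{C_2}$-algebras.

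The main obstacle is the identification in the second step: that $N_e^{C_2}$ of a localization is the localization at the norm class in $\E_\infty^{C_2}$-algebras. A related subtlety is checking that a class in $\pi_\star^{C_2}$ becomes invertible. Invertibility in $C_2$-spectra is detected on both geometric fixed points and underlying spectra, not on underlying spectra alone, so we cannot argue only non-equivariantly. The two-sided pushout argument handles this: we never need to invert a non-norm class, because the norm class is the only thing inverted on each corner. To verify the universal property, we will check invertibility of $y$ on $\Phi^{C_2}$ and on the underlying spectrum separately. On $\Phi^{C_2}$, $\Phi^{C_2}N_e^{C_2}M\simeq M$ and $\Phi^{C_2}(y)$ corresponds to $x$. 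On underlying spectra we have $x\tau(x)$. If detecting this via geometric fixed points proves delicate, the fallback is to invoke the nilpotence theorem. We would use it to show that a ring map inverting $N(x)$ already inverts it on all fixed-point levels, by arguing that a class whose restriction is invertible is invertible modulo nilpotents.
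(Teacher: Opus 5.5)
Your proposal follows the paper's proof: write $\THR$ as $R\otimes_{N_e^{C_2}\Res_e^{C_2}R}R$, identify $N_e^{C_2}\Res_e^{C_2}(R[y^{-1}])$ with $(N_e^{C_2}\Res_e^{C_2}R)[(N_e^{C_2}x)^{-1}]$, note that its base change along the counit is $R[y^{-1}]$, and collapse the relative tensor product. Your invertibility check on underlying spectra and on $\Phi^{C_2}$ is a good justification of a step the paper leaves implicit, and the nilpotence fallback is not needed.

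Two justifications need correcting, though the argument itself stands. First, $\Res_e^{C_2}R[x^{-1},(\tau_*x)^{-1}]\simeq\Res_e^{C_2}R[x^{-1}]$ holds because $x=\Res_e^{C_2}\ol{x}$ forces $\tau_*x=(\deg\tau)x$, as in \cref{lemma: res n res}; it does not hold ``up to the swap'', since for a general underlying class $N_e^{C_2}(\tau_*x)$ is not a unit multiple of $N_e^{C_2}x$. Second, localizing at an arbitrary class in $\pi_\star^{C_2}R$ need not give an $\E_{\infty}^{C_2}$-algebra; here it does by Hill--Hopkins, because $\Nm_e^{C_2}\Res_e^{C_2}y$ is a unit multiple of $y^2$.
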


In particular, the Real Snaith equivalences do not naively lend themselves to a $\THR$ computation since the Real Bott element $\beta_{\R}$ is not of the form required in \cref{mainthm: THR inverted}. Our solution is to improve on the Real Snaith theorems by providing the following norm inverted variants.

\begin{mainthm}[\cref{corollary: norm inverted snaith}]
    There are equivalences
    \[ \KU_{\R} \simeq \Sigma_+^{\infty} \CP^{\infty}_{\R}[\Nm_e^{C_2}(\beta)^{-1}]  \quad \text{and} \quad \MUP_{\R} \simeq \Sigma_+^{\infty} \BUR[\Nm_e^{C_2}(\beta)^{-1}] \]
    of $\E_{\infty}^{C_2}$- resp.~$\E_{\rho}$-algebras.
\end{mainthm}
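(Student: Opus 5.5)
The plan is to reduce to the Real Snaith equivalences of \cref{mainthm: Real Snaith} by comparing the two localizations. Write $X$ for either $\Sigma_+^{\infty}\CPR^{\infty}$ or $\Sigma_+^{\infty}\BUR$. In both cases we have an element $\beta_{\R} \in \pi_{\rho}^{C_2}(X)$ with underlying class $\beta = \Res_e^{C_2}\beta_{\R} \in \pi_2(X)$. The norm $\Nm_e^{C_2}(\beta) \in \pi^{C_2}_{2\rho}(X)$ is formed using the norm multiplication of the $\E_\infty^{C_2}$-structure on $X$, which comes from the $C_2$-$\E_\infty$-space structure of $\CPR^\infty$ resp.~$\BUR$. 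The key observation is the standard formula $\Nm_e^{C_2}(\Res_e^{C_2} \beta_{\R}) = \beta_{\R}\cdot \overline{\beta_{\R}}$, where $\overline{(-)}$ denotes the Weyl conjugate. For a $C_2$-fixed class this should simply be $\beta_{\R}^2$, up to a unit. I would verify this by restricting to the underlying level: there $\Res_e^{C_2}\Nm_e^{C_2}(\beta) = \beta\cdot c^*\beta$. Complex conjugation acts on $\beta$ by $-1$ in $\pi_2$, but the Real class $\beta_\R$ absorbs the sign, so the restriction is $\pm\beta^2$.

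\textbf{Step 1.} Show that in $X[\beta_{\R}^{-1}]$ the element $\Nm_e^{C_2}(\beta)$ is invertible, and that in $X[\Nm_e^{C_2}(\beta)^{-1}]$ the element $\beta_{\R}$ is invertible. Both localizations are then the same localization of $X$, since $\beta_\R$ divides $\Nm_e^{C_2}(\beta)$ and $\Nm_e^{C_2}(\beta)$ divides a power of $\beta_\R$. I will deduce invertibility from the strongly even property (\cref{prop: strongly even}). Both $X[\beta_\R^{-1}]$ and, after an analogous verification, $X[\Nm_e^{C_2}(\beta)^{-1}]$ are strongly even. Multiplication by a class is then an equivalence iff it is so on underlying spectra. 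Underlying, $\Res\Nm(\beta) = \pm\beta^2$ and $\Res\beta_\R = \beta$, so each map is an equivalence in the other localization by the non-equivariant statement, which is trivial. This invertibility detection for strongly even spectra is the tool that avoids computing the norm class explicitly.

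\textbf{Step 2.} Invertibility gives a canonical map of localizations $X[\Nm_e^{C_2}(\beta)^{-1}] \to X[\beta_\R^{-1}]$, obtained via the universal property of localization of $\E_\infty^{C_2}$- resp.~$\E_\rho$-algebras at an element. Multiplicative localizations at a homotopy class inherit the $\E_\infty^{C_2}$ resp.~$\E_\rho$ structure, so the map is structured, and by Step 1 it is an equivalence. Alternatively, it suffices to check it on underlying spectra after establishing that the source is strongly even. Composing with \cref{mainthm: Real Snaith} yields the claim.

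The main obstacle is showing that inverting the norm class interacts correctly with the $\E_\rho$-structure on $\Sigma_+^\infty\BUR$. Only the $\E_\infty^{C_2}$ structure of the suspension spectrum is used to define $\Nm$, so one must check that localization of $\E_\rho$-algebras at a class in $\pi_{2\rho}^{C_2}$ is well defined. I would first try to handle this by localizing in $\E_\infty^{C_2}$-algebras, where $X$ lives, and then restricting the structure. The comparison with \cref{mainthm: Real Snaith} then happens in $\E_\rho$-algebras only at the last step.
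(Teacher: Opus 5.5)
Your comparison map is fine and matches how the paper starts. $X[\beta_{\R}^{-1}]$ is strongly even, so \cref{corollary: Nm Res in strongly even} gives $\Nm_e^{C_2}(\beta)=\pm\beta_{\R}^2$ there, and this yields a map $X[\Nm_e^{C_2}(\beta)^{-1}]\to X[\beta_{\R}^{-1}]$.

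\textbf{The gap is the reverse direction.} You assert that $X[\Nm_e^{C_2}(\beta)^{-1}]$ is strongly even ``after an analogous verification''. But the verification in \cref{prop: strongly even} needs the Real orientation $\beta_{\R}^{-1}\circ\phi$, i.e.\ it needs $\beta_{\R}$ to already be a unit, which is what you want to prove. Nothing else makes this localization of $X$ strongly even; $X$ itself is far from strongly even, since it contains $\S$ as a retract. Given the comparison map, strong evenness of $X[\Nm_e^{C_2}(\beta)^{-1}]$ is equivalent to the theorem. So invertibility of $\beta_{\R}$ there cannot be tested on underlying spectra.

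Your ``standard formula'' has the same problem. $\Res_e^{C_2}\Nm_e^{C_2}(y)=y\cdot\tau_*y$ is the double coset formula and holds only after applying $\Res_e^{C_2}$. At the $C_2$-level, $\Nm_e^{C_2}\Res_e^{C_2}x$ need not be a unit multiple of $x^2$ unless restriction is injective. Already in the Burnside ring, $\Nm_e^{C_2}\Res_e^{C_2}(2)=2+[C_2]$, which is not a unit times $4$. So neither divisibility relation you invoke is available in $X$.

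\textbf{What is missing} is to test the comparison map on geometric fixed points. On $\Phi^e$ it is clear. On $\Phi^{C_2}$, the class $\Phi^{C_2}\Nm_e^{C_2}(\beta)$ is the image $r\beta$ of $\beta$ under the ring map $r\colon \Res_e^{C_2}X\simeq\Phi^{C_2}N_e^{C_2}\Res_e^{C_2}X\to\Phi^{C_2}X$ induced by the norm multiplication. It is not a priori related to $\eta^2=\Phi^{C_2}(\beta_{\R}^2)$. One must show that $\Sigma^{\infty}_+\RP^{\infty}[(r\beta)^{-1}]\simeq 0$, resp.\ that $\Sigma^{\infty}_+\BO[(r\beta)^{-1}]\simeq\Sigma^{\infty}_+\BO[\eta^{-2}]$. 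The paper does this with the nilpotence theorem:
\begin{itemize}
\item In the $\RP^{\infty}$ case, $r\beta\in\pi_2(\Sigma^{\infty}_+\RP^{\infty})$ is torsion. Hence it has zero image in the torsion-free group $\MU_2(\Sigma^{\infty}_+\RP^{\infty})\cong\Z$, so it is nilpotent by Devinatz--Hopkins--Smith. The target is $\Phi^{C_2}\KU_{\R}\simeq 0$ by Real Snaith.
\item In the $\BO$ case, both $r\beta$ and $\eta^2$ are non-nilpotent, using $\MOP\neq 0$ and the existence of the comparison map. May nilpotence forces both to hit the generator of $H_2(\BO;\Z)\cong\Z/2$. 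So $r\beta-\eta^2$ is nilpotent, and \cref{lemma: localization and nilpotent} applies.
\end{itemize}

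The $\E_{\rho}$-issue you single out as the main obstacle is not the real difficulty. The comparison takes place between $\E_\infty$-localizations of $\E_{\infty}^{C_2}$-rings and is then composed with \cref{theorem: Erho Real Snaith}.
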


In proving this theorem, we do not explicitly compute $\Nm_e^{C_2}(\beta)$, but rather proceed indirectly via an application of the nilpotence theorem on $C_2$-geometric fixed points. Another proof ingredient is the Real Snaith theorem itself.

Combining these equivalences with the $\THR$ computation of spherical group rings allows us to carry out the computation in \cref{mainthm: THR KUR and THR MUPR}.

\subsection*{Outline}
The remainder of the article is organized as follows:

In \cref{section: structure real orientations} we use Wilson space theory to produce $\E_4$- resp.~$\E_6$-orientations of even resp.~even periodic $\E_{\infty}$-ring spectra. We lift these to a $C_2$-equivariant setting and prove a lifting theorem for structured orientations. The rest of the article can be read independently of this section by accepting \cref{theorem: MUR MUPR lifting result}.

In \cref{section: Real Snaith Equivalences} we prove the structured Real Snaith theorems and their norm-inverted variants. We spell out the real Snaith theorems as consequences.

In \cref{section: computations in THR} we analyze when one can commute $\THR$ with inverting an element. We then use this to compute $\THR(\KU_{\R})$ and $\THR(\MUP_{\R})$. We also prove an $\E_{\rho}$-version of the Real Bökstedt theorem.

\subsection*{Notations \& Conventions}
We collect some notational conventions here that we will use throughout the whole article.
\begin{enumerate}[(1)]
    \item Throughout, $G$ will always be a finite group.
    \item We denote by $\rho$ and \(\sigma\) the regular and sign representations of $C_2$ respectively.
    \item Let $X,Y \in \Sp_G$. We  denote by $\ul{\map}_G(X, Y)$ the mapping $G$-spectrum refining the mapping $G$-space $\ul{\Map}_G(X, Y)$. We also write $\ul{\map}_{\ul{\Sp}_G}(X, Y)$ and $\ul{\Map}_{\ul{\Sp}_G}(X, Y)$.
    \item The $G$-$\infty$-operad $\E_{\infty}^G$ is the terminal $G$-$\infty$-operad. So $\E_{\infty}^G$-algebras admit all norms -- they are also known as normed algebras, ultracommutative algebras, or \(G\)-\(\E_\infty\)-algebras.
    \item There is a restriction functor of equivariant $\infty$-operads $\Res_H^G \colon \Op_{G, \infty} \to \Op_{H, \infty}$ which admits a right adjoint $\Coind_H^G \colon \Op_{H, \infty} \to \Op_{G, \infty}$, the coinduction functor. See \cite[Section 1.3.1]{stewart2025tensorproductsequivariantcommutative} and \cite[Remark 2.2.5]{quinnZhu2026multiplicativeequivariantthomspectra} for a discussion in the language of parametrized higher categories.
\end{enumerate}

\subsection*{Acknowledgements}
We thank Christian Carrick, Markus Hausmann,  Kaif Hilman, Ryomei Iwasa, Ishan Levy, Lennart Meier, Lucas Piessevaux and Stefan Schwede for helpful and encouraging discussions. 
RQ is funded by the NUI Travelling Studentship. QZ is supported by the Max Planck Institute for Mathematics (MPIM) in Bonn and is thankful
for its financial support and for providing  conducive working environments. QZ furthermore thanks Utrecht University for its hospitality, where a portion of this article was written.

\section{Structured Real orientations through Wilson spaces}
\label{section: structure real orientations}

\subsection{Recollection on lifting equivariant maps}

We recall our lifting theorems from \cite{quinnZhu2026multiplicativeequivariantthomspectra}, since they will be applied in the text and motivate our discussion of Wilson space theory. Let us denote by $\rho$ the regular representation of the group $C_2$. Our first general lifting result is the following.

\begin{theorem}[{\cite[Theorem D]{quinnZhu2026multiplicativeequivariantthomspectra}}]
    Let \(E \in \Sp^{C_2}\) be strongly even and $X \in \Sp^{C_2}$. Suppose the following:
    \begin{enumerate}[(i)]
        \item The $C_2$-spectrum \(X\) is slice bounded below.
        \item There is an indexing set $I$ such that $\uHZ\otimes X\simeq \uHZ\otimes \bigoplus_{i\in I}S^{n_i\rho}$ for certain $n_i \in \N$ where every degree appears finitely often.
    \end{enumerate}
    Then, every map $\Res_e^{C_2} X \to \Res_e^{C_2}E$ lifts uniquely to a $C_2$-equivariant map $X \to E$.
\end{theorem}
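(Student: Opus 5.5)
The plan is to show that the restriction map
\[ \Res_e^{C_2} \colon \map_{C_2}(X, E) \longrightarrow \map(\Res_e^{C_2}X, \Res_e^{C_2}E) \]
is an equivalence on $\pi_0$, and ideally on all of $\pi_*$. First I rewrite the target. By the induction/restriction adjunction, $\map(\Res_e^{C_2}X,\Res_e^{C_2}E)\simeq \map_{C_2}(X, \map(C_{2+},E))$. Under this identification, restriction corresponds to the unit $E \to \map(C_{2+}, E)$. Its fiber is $\map(S^{\sigma}, E)$ up to a shift, via the cofiber sequence $C_{2+}\to S^0\to S^{\sigma}$ dualized, so the fiber is $\Sigma^{-\sigma}E$. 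The claim therefore reduces to the vanishing
\[ [X, \Sigma^{k-\sigma} E]^{C_2}=0 \quad\text{for } k=0,1, \]
or equivalently that $\map_{C_2}(X,\Sigma^{-\sigma}E)$ has vanishing $\pi_0$ and $\pi_{-1}$. This settles existence and uniqueness of lifts.

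Next I use the strong evenness of $E$ (Hill--Meier): $\ul{\pi}_{n\rho-1}E=0$ and $\ul{\pi}_{n\rho}E$ is constant. Equivalently, the relevant $RO(C_2)$-graded groups $\pi^{C_2}_{n\rho - \sigma+j}E$ vanish for $j\in\{0,1\}$, since $n\rho-\sigma$ and $n\rho-\sigma+1$ sit in the range controlled by the vanishing $\pi_{n\rho-1}$, $\pi_{n\rho-2}$ and the restriction isomorphism on $\pi_{n\rho}$. Concretely, I would use the cofiber sequence $C_{2+}\wedge S^{n\rho-1}\to S^{n\rho-1}\to S^{n\rho-1+\sigma}$ together with the constancy of $\ul\pi_{n\rho}$ to show that $[S^{n\rho}, \Sigma^{k-\sigma}E]^{C_2}=0$ for $k=0,1$. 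This handles the case $X = S^{n\rho}$ and hence finite wedges of such spheres.

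To pass to general $X$, I filter $X$ using its slice tower. Slice bounded below together with hypothesis (ii) should imply that $X$ admits a cell-like filtration whose associated graded pieces are $\uHZ$-modules, or rather built from slice cells $S^{n\rho}$. Concretely, hypothesis (ii) says the slices $P^{2n}_{2n}X$ are $\Sigma^{n\rho}\uHZ$ wedged over the indices with $n_i=n$, finitely many for each $n$, and the odd slices vanish. It then suffices to show $[\Sigma^{n\rho}\uHZ, \Sigma^{k-\sigma}E]^{C_2}$ vanishes for the relevant $k$. This is where I would switch instead to an even cell structure: by Hill--Meier style arguments, using Hurewicz in the slice-connective range, the homology splitting in (ii) lets me build $X$ as a colimit of cofibers attaching cells $S^{n\rho}$ in increasing $n$, with finitely many cells in each dimension. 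The vanishing for spheres then propagates through the cofiber sequences by the five lemma/long exact sequences. For the limit I use a Milnor $\lim^1$ sequence; the $\lim^1$ term vanishes because the $\pi_{\pm1}$ groups at finite stages vanish as well, since I can prove the sphere vanishing for a range of $k$ covering $-1,0,1$.

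The main obstacle is producing the $S^{n\rho}$-cell structure on $X$ from the homological hypothesis (ii) and slice boundedness. This is an equivariant Hurewicz argument: I inductively map a wedge of $S^{n\rho}$ realizing the bottom $\uHZ$-homology classes, then use the slice connectivity of the cofiber to continue. It requires that $\pi^{C_2}_{n\rho}$ of an $n\rho$-slice-connective spectrum surjects onto $\uHZ$-homology in that degree. I would try to cite the known Hurewicz theorem for slice connectivity, as in Hill--Hopkins--Ravenel, at this step.
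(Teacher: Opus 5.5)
Your first step is correct: the fiber of $\map_{C_2}(X,E)\to\map(\Res_e^{C_2}X,\Res_e^{C_2}E)$ is $\map_{C_2}(X,\Sigma^{-\sigma}E)$, so you need $[X,\Sigma^{k-\sigma}E]^{C_2}=0$ for $k=0,1$. For $X=S^{n\rho}$ these groups are $\pi^{C_2}_{(n+1)\rho-1}E$ and $\pi^{C_2}_{(n+1)\rho-2}E$, which vanish by strong evenness. (Your indexing $n\rho-\sigma+j$ has the wrong sign: e.g.\ $\pi^{C_2}_{-\sigma}\MUR$ contains $a_\sigma\neq 0$. Your explicit cofiber-sequence argument computes the right groups.)

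The genuine gap is the passage from spheres to $X$. Hypotheses (i) and (ii) do \emph{not} give $X$ a cell structure with cells $S^{n\rho}$, and they do not determine the slices of $X$, because $\uHZ$-homology does not detect slice connectivity. Let $C=\widetilde{EC_2}\otimes\S[1/2]$. Its underlying spectrum vanishes and $\Phi^{C_2}C\simeq\S[1/2]$, so $C$ is connective and nonzero. On the other hand, $\Phi^{C_2}(\uHZ\otimes C)\simeq\Phi^{C_2}\uHZ\otimes\S[1/2]\simeq 0$ since $\pi_*\Phi^{C_2}\uHZ$ is $2$-torsion, so $\uHZ\otimes C\simeq 0$. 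Thus $C$ satisfies (i) and (ii) with $I=\emptyset$, yet it is not built from cells.

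Your Hurewicz induction even fails at its first step. Take $X=\fib(S^0\to C)$: it is slice bounded below and $\uHZ\otimes X\simeq\uHZ$. Since $\pi_1\S[1/2]=0$, we get $\pi_0^{C_2}X\cong\ker\bigl(A(C_2)\to\Z[1/2]\bigr)=\Z\{[C_2]\}$, whose Hurewicz image in $\pi_0^{C_2}(\uHZ\otimes X)\cong\Z$ is $2\Z$. The surjectivity you want to cite holds for slice $\geq 2n$ spectra, but nothing forces this $X$ to be slice $\geq 0$; indeed $\pi_{-1}^{C_2}X\cong\Z[1/2]/\Z$. The theorem is nevertheless true for such $X$. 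What is missing is a reason why $\uHZ$-acyclic, slice bounded below pieces map trivially into strongly even spectra.

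That reason comes from filtering the target instead of the source. The paper only quotes this result, but its other uses of the earlier work go through strong evenness of mapping spectra, which is what this gives.
\begin{itemize}
\item A strongly even $E$ has slices $P^{2n}_{2n}E\simeq\Sigma^{n\rho}H\underline{\pi_{2n}E}$ with constant coefficients and no odd slices (Hill--Meier). These are $\uHZ$-modules, so (ii) computes $\map_{C_2}(X,P^{2n}_{2n}E)\simeq\map_{\uHZ}(\uHZ\otimes X,P^{2n}_{2n}E)$.
\item Combined with your reduction, each layer contributes products of $\pi^{C_2}_{m\rho-1}H\underline{M}$ and $\pi^{C_2}_{m\rho-2}H\underline{M}$, and these vanish.
\item Slice boundedness of $X$ is exactly what lets you discard $P^{2k-1}E$ for $k\ll 0$ in each fixed degree.
\item Convergence $E\simeq\lim_m P^{2m}E$ and a Milnor sequence finish the argument.
\end{itemize}

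Your $\lim^1$ justification is also wrong, even where a cell structure exists. The group $[S^{n\rho},\Sigma^{-1-\sigma}E]^{C_2}\cong\pi^{C_2}_{(n+1)\rho}E\cong\pi_{2n+2}\Res_e^{C_2}E$ need not vanish, so the claimed vanishing for $k=-1$ is false. The $\lim^1$ term dies for a different reason: the transition maps are surjective, because their cokernels inject into the $k=0$ groups of the layers, which vanish. The same Mittag-Leffler argument handles the tower $P^{2m}E$.
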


Using this general lifting result, we can deduce a structured lifting result. Concerning equivariant multiplications, let us recall that every $C_2$-representation $V$ gives rise to an equivariant little disk operad $\E_V$. We denote by $\E_{\infty}^{C_2}$ the terminal $C_2$-operad, its algebras are also called $C_2$-$\E_{\infty}$-algebras in the literature. Informally, those are $\E_{\infty}$-algebras together with all norm multiplications and possible coherences. Furthermore, we will write $\Th_{C_2}$ for the $C_2$-equivariant Thom spectrum functor, for example constructed through parametrized colimits \cite{hahn2024equivariantnonabelianpoincareduality, quinnZhu2026multiplicativeequivariantthomspectra}.

\begin{theorem}[{\cite[Theorem E]{quinnZhu2026multiplicativeequivariantthomspectra}}] \label{theorem: chili thom lifting}
    Let $R$ be an $\E_{\infty}^{C_2}$-ring spectrum and $E$ be a strongly even $\E_{\infty}^{C_2}$-algebra in $\ul{\LMod}_R$. Suppose that $X$ is an $n\rho$-loop space with a map $f \colon X \to \ul{\Pic}_{C_2}(R)$ of $n\rho$-loop spaces. Suppose the following:
    \begin{enumerate}[(i)]
        \item There exists an \(\E_{n\rho}\)-\(R\)-algebra map \(\Th_{C_2}(f)\to E\).
        \item There is an indexing set $I$ such that $\uHZ\otimes \mathrm{B}^{n\rho}X\simeq \uHZ\otimes \bigoplus_{i\in I}S^{n_i\rho}$ for certain $n_i \in \N$ where every degree appears finitely often.
    \end{enumerate} 
    Then, every $\E_{2n}$-map $\Res_e^{C_2} \Th_{C_2}(f) \to \Res_e^{C_2}E$ lifts uniquely to an $\E_{n\rho}$-map $\Th_{C_2}f \to E$.
\end{theorem}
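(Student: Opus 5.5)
Since the statement is recalled as \cite[Theorem E]{quinnZhu2026multiplicativeequivariantthomspectra}, the plan is to reprove it by reducing the structured lifting problem to the unstructured lifting theorem above (Theorem D). The reduction goes through the universal property of equivariant Thom spectra. Because $f$ is a map of $n\rho$-loop spaces, it deloops to $\B^{n\rho}f \colon \B^{n\rho}X \to \B^{n\rho}\ul{\Pic}_{C_2}(R)$. The equivariant analogue of the Antolín-Camarena--Barthel universal property then identifies $\E_{n\rho}$-$R$-algebra maps $\Th_{C_2}(f) \to E$ with null-homotopies of the composite
\[ X \xrightarrow{f} \ul{\Pic}_{C_2}(R) \to \ul{\Pic}_{C_2}(E) \]
as $n\rho$-loop maps. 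Equivalently, they are lifts through $\B^{n\rho}$ into the fibre of units. Non-equivariantly, $\E_{2n}$-maps $\Res_e^{C_2}\Th_{C_2}(f) \to \Res_e^{C_2}E$ correspond to the analogous data for $\Res_e^{C_2}\B^{n\rho}X = \B^{2n}\Res X$.

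\textbf{Torsor step.} Hypothesis (i) supplies one $\E_{n\rho}$-map $\phi_0\colon \Th_{C_2}(f)\to E$. Given $\phi_0$, the space of all such maps becomes a torsor under $n\rho$-loop maps $X \to \ul{\mathrm{GL}}_1(E)$. By delooping, these are pointed $C_2$-maps $\B^{n\rho}X \to \B^{n\rho}\ul{\mathrm{GL}}_1(E)$. The same holds underlying, using $\Res\phi_0$, and the two torsor structures are compatible with restriction. So it suffices to show that restriction
\[ \Map_{C_2,*}\bigl(\B^{n\rho}X, \B^{n\rho}\mathrm{GL}_1E\bigr) \to \Map_*\bigl(\B^{2n}X, \B^{2n}\mathrm{GL}_1E\bigr) \]
is an equivalence, or at least bijective on $\pi_0$ with the needed uniqueness.

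\textbf{Linearisation step.} I would pass to connective spectra, replacing $\B^{n\rho}\mathrm{GL}_1E$ by the appropriate spectrum of units. The key point is that maps from a space $Y=\B^{n\rho}X$ into an infinite loop object $\Omega^\infty \Sigma^{n\rho} gl_1E$ are maps of spectra $\Sigma^\infty Y \to \Sigma^{n\rho}gl_1E$. One then compares $gl_1E$ with $E$ itself. The difficulty is that $gl_1E$ is not strongly even, so Theorem D does not apply directly. The plan is to avoid $gl_1$ by working one Postnikov or slice layer at a time. Obstructions and choices for lifting a map from $Y$ live in cohomology of $Y$ with coefficients in the slices of $E$. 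Strong evenness says these slices are $S^{k\rho}\otimes\uHZ$, and the slices of $\Sigma^{n\rho}\mathrm{GL}_1 E$ agree with those of $\Sigma^{n\rho}E$ above the bottom, by the usual truncated-logarithm comparison. Hypothesis (ii) makes $\uHZ\otimes Y$ a sum of $S^{n_i\rho}$. Then each equivariant cohomology group $[Y, \Sigma^{k\rho}\uHZ]$ maps isomorphically to the underlying group $H^{2k}(Y;\Z)$, and all odd and higher-obstruction groups vanish, because $\pi_{*}^{C_2}$ of $S^{m\rho}\otimes\uHZ$ in the relevant degrees matches the underlying groups. An inductive tower argument, with Milnor $\lim^1$ terms vanishing by the finiteness in (ii), then gives the equivalence of mapping spaces on the components needed.

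\textbf{Main obstacle.} The hardest step is the comparison between units and the additive spectrum at the level of slices, i.e. justifying that $\B^{n\rho}\ul{\mathrm{GL}}_1E$ behaves, for obstruction-theoretic purposes, like $\Omega^\infty\Sigma^{n\rho}E$ on the positive part. I would first try to handle it via the equivalence of truncated units $\tau_{[k,2k-1]}gl_1 \simeq \tau_{[k,2k-1]}$ in the slice setting, combined with the fact that $\pi_0$-level units enter only the bottom layer. There the condition $n_i\in\N$ and the $\rho$-grading make the comparison trivially an isomorphism.
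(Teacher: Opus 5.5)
Your reduction is the route the paper relies on. The paper only quotes this theorem, but its later uses show the mechanism. Hypothesis (i) trivializes the problem: your torsor step is the structured Thom isomorphism $E\otimes_R\Th_{C_2}(f)\simeq E\otimes\Sigma^{\infty}_+X$ in disguise. After delooping, what remains is unique lifting of maps of $C_2$-spectra $\Sigma^{\infty}\B^{n\rho}X\to\Sigma^{n\rho}\gl_1E$. The paper imports this as strong evenness of mapping spectra into $\gl_1E$ (compare the proof of \cref{theorem: commutative maps from MWR and MWPR}).

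The gap is in how you supply this last input. Your premise that $\gl_1E$ is not strongly even is false. Your justification for identifying its slices, an equivariant ``truncated logarithm'', is also not available. Non-equivariantly that comparison rests on products of classes of degree $\geq k$ landing in degree $\geq 2k$. Equivariantly, the transfer on $\ul{\pi}_i\gl_1E$ is the multiplicative norm. The Tambara formula in $E^0_{C_2}(S^i)$ (trivial action) gives $N(1+x)=1+\operatorname{tr}(x)+N(x)$, with $N(x)=a_\sigma^i\Nm_e^{C_2}(x)$ for reduced $x$. So the correction term stays in degree $i$.

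Concretely, take $E=\KU_{\R}$ and $x=\beta\in\pi_2^e$. Then $\operatorname{tr}(\beta)=r(\beta)=\eta^2$ (realification), and $a_\sigma^2\Nm_e^{C_2}(\beta)=-(a_\sigma\beta_{\R})^2=\eta^2$ by \cref{corollary: Nm Res in strongly even}. Hence the transfer vanishes on $\ul{\pi}_2\gl_1\KU_{\R}$ but not on $\ul{\pi}_2\KU_{\R}$. The integer-graded version of your comparison is therefore false, and you give no proof of a slice-graded one. So the step you flag as the main obstacle is unsupported as written.

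Your layer-by-layer claims are also too strong. Equivariant indeterminacy groups such as $\pi^{C_2}_{-\sigma}\uHZ\cong\Z/2$ do not vanish, so a tower argument gives only a five-lemma comparison, not groupwise isomorphisms.

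The missing idea is unstable, and it removes the need for any tower. Let $V$ be a representation with $V^{C_2}\neq 0$. Then $S^V$ and $(S^V)^{C_2}$ are connected, so based $C_2$-maps $S^V\to\mathrm{GL}_1E$ are the same as based maps $(S^V,\ast)\to(\Omega^\infty E,1)$. Translation by the fixed point $1$ identifies these with $\pi^{C_2}_VE$. This is a group isomorphism since $S^V$ is a suspension. It is compatible with restriction, which is precomposition with the unstable map ${C_2}_+\wedge S^V\to S^V$.

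Apply this with $V=k\rho$ for $k\geq 1$ and $V=k\rho-1$ for $k\geq 2$. You get that $\ul{\pi}_{k\rho}\gl_1E$ has bijective restriction, hence is constant, and that $\ul{\pi}_{k\rho-1}\gl_1E=0$. The remaining cases are direct:
\begin{itemize}
\item $\ul{\pi}_0\gl_1E$ is the units of the constant $\ul{\pi}_0E$, so it is constant.
\item $\pi^{C_2}_{\sigma}\gl_1E=0$ by the cofibre sequence ${C_2}_+\to S^0\to S^{\sigma}$, since $\pi_1^eE=0$ and restriction is injective on units.
\item Negative degrees vanish by connectivity.
\end{itemize}
Hence $\gl_1E$, and so $\Sigma^{n\rho}\gl_1E$, is strongly even. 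Theorem D, recalled before the statement, applies to the slice bounded below spectrum $\Sigma^{\infty}\B^{n\rho}X$ using hypothesis (ii). It gives the required bijection on $\pi_0$, and together with your torsor step this finishes the argument.
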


The main crux to lifting from Thom spectra consists of the existence of a structured orientation (\cref{theorem: chili thom lifting}\textcolor{chilligreen}{(i)}), since it allows us to perform structured Thom isomorphisms. So structured Real orientations $\MU_{\R} \to E$ depend crucially on the existence of a structured map $\MU_{\R} \to E$. The theory of Wilson spaces is what allows us to access such orientations. 

In \cite{quinnZhu2026multiplicativeequivariantthomspectra} we laid out such a theory for $\E_{\rho}$-orientations. The goal of the current section is to expand on this theory to $\E_{2\rho}$-orientations and to give a periodic variant of it. This will be a theory of $\E_{3\rho}$-$\MUPR$-orientations and it will be relevant to access the Real Snaith theorem.

\subsection{Structured orientations through Wilson spaces}
Inspired by \cite[Theorem 3.2.10]{hahn2024motivicfiltrationtopologicalcyclic} we will provide new structured complex orientations of even (periodic) $\E_{\infty}$-ring spectra. The key will be to manipulate the remarkable obstruction-theoretic features of the Wilson spaces $W_{2i} \coloneqq \Omega^{\infty}\Sigma^{2i}\MU$ that were first studied by Steve Wilson in his PhD thesis \cite{Wilson1973}.

\begin{construction} \label{construction: MW MWP}
    Let \(\gamma\colon \MU\to \ku\) be a map that is surjective on homotopy.\footnote{Such a map exists, and is e.g.~realized by the Conner--Floyd orientation.}
    We consider the Thom spectra
    \begin{align*}
        \tb{\MW} &\coloneqq \Th\left(\Omega^\infty \Sigma^2\MU \xrightarrow{\Omega^\infty \Sigma^2 \gamma} 
    \Omega^\infty \Sigma^2\ku \simeq 
    \BU \xrightarrow{ \ J \ } \Pic(\Sp)\right),
    \\ \tb{\MWP} &\coloneqq \Th\left(\Omega^\infty \MU \xrightarrow{\Omega^\infty \gamma} 
    \Omega^\infty \ku \simeq 
    \BUP \xrightarrow{\ J \ } \Pic(\Sp)\right).
    \end{align*}
    The spectrum $\MW$ was introduced by Hahn--Raksit--Wilson in \cite[Definition 3.2.9]{hahn2024motivicfiltrationtopologicalcyclic}, while $\MWP$ is a natural periodic version of it.
\end{construction}

Recall that an even $\E_{\infty}$-ring spectrum $E$ is called \emph{even periodic} if $\Sigma^2 E \simeq E$ of $E$-modules.

\begin{theorem} \label{theorem: commutative maps from MW and MWP}
    Let $E$ be an $\E_{\infty}$-ring spectrum.
    \begin{enumerate}[(i)]
        \item If $E$ is even, then there exists an $\E_{\infty}$-map $\MW \to E$.
        \item If $E$ is even periodic, then there exists an $\E_{\infty}$-map $\MWP \to E$.
    \end{enumerate}
\end{theorem}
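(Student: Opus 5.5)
The approach is the universal property of Thom spectra. For an $\E_\infty$-map $f\colon X \to \Pic(\Sp)$ out of a grouplike $\E_\infty$-space (equivalently a connective spectrum map $X \to \mathrm{pic}(\S)$), $\E_\infty$-maps $\Th(f) \to E$ correspond to null-homotopies of the composite
\[ X \to \Pic(\Sp) \to \Pic(E), \]
viewed as a map of connective spectra. Concretely, these are null-homotopies of a map $\Sigma^2\MU \to \tau_{\geq 0}\mathrm{pic}(E)$ in case (i), resp.\ $\MU \to \tau_{\geq 0}\mathrm{pic}(E)$ in case (ii). Note that $\Omega^\infty\Sigma^2\gamma$ and $\Omega^\infty\gamma$ are infinite loop maps, and $J$ is $\E_\infty$. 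The plan is to show that the relevant spectrum maps are null by an obstruction-theoretic argument that uses only the evenness of $\MU$ against the evenness of $E$.

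First, I would reduce to a map into $\mathrm{gl}_1$ or a connected cover. In case (i), $\Sigma^2\MU$ is $1$-connective; indeed it is $2$-connective. So the map factors through $\tau_{\geq 1}\mathrm{pic}(E) \simeq \Sigma\,\mathrm{gl}_1(E)$, and then, by connectivity, through $\tau_{\geq 2}$. In case (ii), $\MU$ is connective. The composite $\MU \to \ku \to \mathrm{pic}(\S) \to \mathrm{pic}(E)$ on $\pi_0$ sends $1$ to $[\Sigma^2 E]$, and periodicity gives $[\Sigma^2 E] = [E]$ in $\pi_0\Pic(E)$. For this step I would rather argue at the level of spaces: $\Pic(E) = \pi_0 \times \mathrm{B}\mathrm{GL}_1(E)$ splits additively only on components, so the cleaner route is to show that the map of spectra $\MU \to \mathrm{pic}(E)$ is null using the cofibre sequence $\tau_{\geq 1}\mathrm{pic}(E) \to \mathrm{pic}(E) \to \pi_0\mathrm{pic}(E)$. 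The $\pi_0$ component vanishes by periodicity, and $\MU \to H\pi_0$ maps are determined on $\pi_0$ since $[\MU, \Sigma^k H A]$ for $k=1$ involves $H^1(\MU;A)=0$. So the map lifts to $\tau_{\geq1}\mathrm{pic}(E)=\Sigma\mathrm{gl}_1 E$, and the lift is unique up to $[\MU,\pi_0]$ changes, which is harmless since we only need existence.

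The main step is then to show that any map $Y \to \Sigma\,\mathrm{gl}_1(E)$ is null-homotopic, where $Y = \Sigma^2\MU$ or $\MU$. The key facts are:
\begin{itemize}
\item $\pi_*\Sigma\mathrm{gl}_1(E)$ is concentrated in odd degrees $\geq 1$: we have $\pi_{k}\Sigma\mathrm{gl}_1 E = \pi_{k-1}\mathrm{gl}_1E$, which for $k \geq 2$ equals $\pi_{k-1}E$ and vanishes for $k-1$ odd. In low degrees, $\pi_1 = \pi_0(E)^\times$, which sits in odd degree $1$.
\item $\MU$ has a cell structure, indeed it is a wedge after $H\Z$-smashing, with cells only in even degrees, and $H_*(\MU;\Z)$ is free and even.
\end{itemize}
I would run the Atiyah--Hirzebruch or skeletal obstruction theory. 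Writing $Y$ as a colimit of even-cell skeleta, the obstructions and the successive choices live in $H^{k}(Y;\pi_{k}\Sigma\mathrm{gl}_1E)$ and $H^{k}(Y;\pi_{k+1})$. These groups vanish because $H^{\mathrm{odd}}(Y;A)=0$ and $\pi_{\mathrm{even}}\Sigma\mathrm{gl}_1E=0$. In addition, $\lim^1$ terms vanish by the same evenness. This is precisely the argument of Hahn--Raksit--Wilson \cite[Theorem 3.2.10]{hahn2024motivicfiltrationtopologicalcyclic}, and I would follow it for (i) verbatim.

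The main obstacle I expect is the care needed in case (ii). In that case the degree-$0$ part of $\Pic$ and the nontrivial component $\pi_0\MU=\Z$ mapping to $\pi_0\Pic$ must be handled, and periodicity must be used to kill it. After that step, the argument is the same evenness obstruction theory. One subtlety here is that $\gamma$ realizes $\Omega^\infty\ku\simeq\BUP=\Z\times\BU$ with the $\Z$-component going to $[\Sigma^{2n}\S]$, so the periodicity isomorphism $\Sigma^2E\simeq E$ of $E$-modules is exactly what trivializes this component compatibly as $E_\infty$-data.
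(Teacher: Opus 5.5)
Your proof is correct.

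\textbf{Part (ii).} This is the paper's argument. The Thom-spectrum universal property reduces the claim to null-homotoping $\MU\to\ku\to\piccn(\S)\to\piccn(E)$. Even periodicity kills the composite to $\pi_0\piccn(E)$. The paper does this by factoring through $\pi_0\ku$; you use that maps $\MU\to HA$ are detected on $\pi_0$. That holds because $[\MU,HA]=H^0(\MU;A)\cong\mathrm{Hom}(\pi_0\MU,A)$, not because of $H^1(\MU;A)$ as you wrote. The resulting lift to $\Sigma\gl_1E$ is then null, since $\Sigma\gl_1E$ has only odd homotopy and $\MU$ has only even cells.

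\textbf{Part (i).} Here you take a different route from the one the paper writes out. The paper recites Hahn--Raksit--Wilson: Thomifying an even cell structure of $\MU$ gives an even $\E_\infty$-cell structure on $\MW$ with a single $0$-cell. One then extends $\S\to E$ cell by cell, with obstructions in $\pi_{\mathrm{odd}}E=0$. You instead rerun the Picard argument from (ii), where the $\pi_0$-step is vacuous because $\Sigma^2\MU$ is $2$-connective. The paper itself remarks that the technique of (ii) also proves (i), so this is legitimate.

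The trade-off is as follows. The cell argument works directly against $\pi_*E$ and needs no analysis of $\piccn(E)$ or $\gl_1E$. Your argument is uniform, and unlike cell attachment it adapts to the unbounded spectrum $\MWP$. For the same reason, calling your (i) ``precisely'' the Hahn--Raksit--Wilson argument is inaccurate.

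\textbf{Correction to your obstruction theory.} The groups $H^k(Y;\pi_{k+1}\Sigma\gl_1E)$ do not vanish for $k$ even; they detect the nonzero odd homotopy of $\map(\MU,\Sigma\gl_1E)$. What is true, and what kills the $\lim^1$-term, is that restriction $[\Sigma Y^{(2k)},\Sigma\gl_1E]\to[\Sigma Y^{(2k-2)},\Sigma\gl_1E]$ along even skeleta is surjective. Its cokernel injects into $\prod\pi_{2k}\Sigma\gl_1E=0$. So the Mittag-Leffler condition holds, and only the groups $H^k(Y;\pi_k\Sigma\gl_1E)=0$ matter.
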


\begin{proof}
    For expository purposes, we will recite \cite[Proof of Theorem 3.2.10]{hahn2024motivicfiltrationtopologicalcyclic} and \cite[Theorem 5.10]{gabe2025realsyntomiccohomology} in (i), even though the proof technique of (ii) also applies for (i). The same technique will not be feasible to conclude (ii).
    \begin{enumerate}[(i)]
        \item The spectrum $\MU$ is connective and has an even cell decomposition,\footnote{The even cell decomposition comes from the computation $H_*\MU \cong \Z[b_1, b_2, \cdots]$. This also reveals why the same argument does not work for $\ku$ and $\Omega^{\infty} \Sigma^2 \ku \simeq \BU$, as $H_* \ku$ is considerably more complicated.} so Thomifying $\Omega^{\infty} \Sigma^2(-)$ of that cell decomposition yields an even $\E_{\infty}$-cell decomposition of $\MW$ with a single $0$-cell. The obstructions to the extensions
    \begin{center}
        \begin{tikzcd}
            \S \arrow[r] \arrow[d] & E
            \\ \MW \arrow[ur, dashed]
        \end{tikzcd}
    \end{center}
    as $\E_{\infty}$-maps lie in odd homotopy groups of $E$, which vanish.
    \item Since $\MWP$ is unbounded, it seems hard to mimic cell attaching arguments here. We will employ a Thom spectrum argument instead.
    
    An $\E_{\infty}$-ring map $\MWP \to E$ is the same datum as a nullhomotopy of
    \begin{center}
        \begin{tikzcd}
            \MU \arrow[r, "\gamma"] & \ku \arrow[r, "J"] & \operatorname{pic}{\S} \arrow[r] & \operatorname{pic}{E},
        \end{tikzcd}
    \end{center}
    see e.g.~\cite[Lemma 3.15]{antolinbarthel2019thom}. So we need to show that this composite is nullhomotopic. Consider the fiber sequence
    \begin{center}
        \begin{tikzcd}
            \Sigma \gl_1 E \simeq \tau_{\geq 1 }(\operatorname{pic}{E}) \arrow[r] & \operatorname{pic}{E} \arrow[r] & \pi_0(\operatorname{pic}{E})
        \end{tikzcd}
    \end{center}
    coming from the standard $t$-structure of $\Sp$. We wish to show that our aforementioned composite factors through $\Sigma \gl_1 E \to \operatorname{pic}{E}$. The composite 
    \begin{center}
        \begin{tikzcd}
            \ku \arrow[r, "J"] & \operatorname{pic}{\S} \arrow[r] & \operatorname{pic}{E} \arrow[r] &\pi_0(\operatorname{pic}{E}) 
        \end{tikzcd}
    \end{center}
    factors through $\ku \to \tau_{\leq 0} \ku \simeq \pi_0 \ku$ since $\pi_0(\operatorname{pic}{E})$ is $0$-truncated. The map $\pi_0 \ku \to \pi_0(\operatorname{pic}{E})$ is described by sending a complex vector space $V$ to $\Sigma^{|V|}E$. On the other hand, $\Sigma^{|V|}E \simeq E$ by even periodicity of $E$. So the map $\pi_0 \ku \to \pi_0(\operatorname{pic}{E})$ is the null map.
    
    Thus, there exists a factorization
    \begin{center}
        \begin{tikzcd}
            & & & \Sigma \gl_1 E \arrow[d]
            \\\MU \arrow[r, "\gamma", swap] & \ku \arrow[r, "J", swap] \arrow[urr, bend left, dashed, "\exists"] & \operatorname{pic}{\S} \arrow[r] & \operatorname{pic}{E}
        \end{tikzcd}
    \end{center}
    An Atiyah--Hirzebruch spectral sequence computation shows that $\map_{\Sp}(\MU, \Sigma\gl_1 E)$ is odd, see \cite[Proposition B.2.3]{quinnZhu2026multiplicativeequivariantthomspectra}. So every map $\MU \to \Sigma \gl_1 E$ is nullhomotopic. \qedhere
    \end{enumerate}
\end{proof}

\noindent As we have indicated, \cref{theorem: commutative maps from MW and MWP}(i) was known due to Hahn--Raksit--Wilson \cite{hahn2024motivicfiltrationtopologicalcyclic}. 
However, the $\E_{\infty}$-$\MWP$-orientation required a new argument.

Our next goal is to provide structured maps $\MU \to \MW$ and $\MUP \to \MWP$. To get started on this, we will need a quick lemma.

\begin{lemma} \label{lemma: existence of lifts}
    Consider two maps $f \colon x \to z$ and $g \colon y \to z$ of even connective spectra. Suppose that $H^*(x;A)$ is even for every finitely generated abelian group $A$, that $\pi_*y$ and $\pi_* z$ are of finite type and that $\pi_* g$ is surjective. Then, there exists a lift
    \begin{center}
        \begin{tikzcd}
            & y \arrow[d, "g"]
            \\ x \arrow[r, "f", swap] \arrow[ur, dashed] & z
        \end{tikzcd}
    \end{center}
    of connective spectra.
\end{lemma}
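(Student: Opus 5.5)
The plan is to reduce the lifting problem to the vanishing of a single obstruction. Let $F \coloneqq \fib(g)$, so that there is a fiber sequence $F \to y \xrightarrow{g} z$. A lift of $f$ through $g$ exists as soon as the composite $x \xrightarrow{f} z \to \Sigma F$ is nullhomotopic. It therefore suffices to show that $[x, \Sigma F] = 0$, which would follow from proving that $\map_{\Sp}(x, \Sigma F)$ has vanishing $\pi_0$.

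First, I will identify $F$. Since $y$ and $z$ are even and $\pi_* g$ is surjective, the long exact sequence in homotopy breaks into short exact sequences
\[ 0 \to \pi_{2k} F \to \pi_{2k} y \to \pi_{2k} z \to 0. \]
Hence $\pi_* F$ is concentrated in even degrees. Each $\pi_{2k} F$ is finitely generated, being a subgroup of the finitely generated group $\pi_{2k} y$; this is where the finite type hypothesis is used. Moreover, $F$ is connective, since $\pi_{-1}F$ would be a cokernel of $\pi_0 g$, which vanishes by surjectivity. So $\Sigma F$ is a connective spectrum with homotopy concentrated in odd degrees, and each homotopy group is finitely generated.

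Second, I will compute $[x, \Sigma F]$ via the Postnikov tower of $\Sigma F$, using the layers $\Sigma^{2k+1} H\pi_{2k}F$. The relevant groups are
\[ [x, \Sigma^{2k+1} H A_k] = H^{2k+1}(x; A_k) \quad \text{and} \quad [x, \Sigma^{2k} H A_k] = H^{2k}(x; A_k), \]
where $A_k \coloneqq \pi_{2k}F$. By hypothesis, the first vanishes because $A_k$ is finitely generated. An induction up the tower then shows that $[x, \tau_{\le n}\Sigma F] = 0$ for all $n$: each fiber contributes only odd cohomology in the degree that matters. To be precise, one takes the exact sequence $[x, \Sigma^{2k+1}HA_k] \to [x, \tau_{\le 2k+1}\Sigma F] \to [x, \tau_{\le 2k-1}\Sigma F]$, in which both outer terms vanish.

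Finally, I will pass to the limit using the Milnor sequence
\[ 0 \to \lim\nolimits^1 [x, \Sigma^{-1}\tau_{\le n}\Sigma F] \to [x, \Sigma F] \to \lim [x, \tau_{\le n}\Sigma F] \to 0. \]
The $\lim$ term vanishes by the previous step. The main obstacle I expect is the $\lim^1$ term. The groups $[x, \tau_{\le n} F]$ are filtered by the even cohomology groups $H^{2k}(x; A_k)$, which need not vanish.

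To handle this, I would first try to show that these groups are eventually constant in $n$. The maps $[x, \tau_{\le 2k}F] \to [x, \tau_{\le 2k-2}F]$ are surjective, since the obstruction lies in $H^{2k+1}(x; A_k) = 0$. A tower of surjections has vanishing $\lim^1$, so the Mittag-Leffler condition is automatic. This surjectivity, which again uses oddness, is the key point. With it, $[x, \Sigma F] = 0$ and the desired lift exists.
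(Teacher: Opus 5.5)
Your argument is correct and is essentially the paper's proof. The paper also reduces to the vanishing of $\pi_0\map_{\Sp}(x,\cofib g)$ for the odd spectrum $\cofib g\simeq\Sigma F$, and then settles this by citing an Atiyah--Hirzebruch-type argument; your Postnikov induction, together with the Mittag-Leffler treatment of the $\lim^1$ term via the surjectivity coming from $H^{2k+1}(x;A_k)=0$, makes that argument explicit. The only blemish is the aside about the groups being ``eventually constant'': this is not true in general and not needed, since surjectivity of the tower already kills $\lim^1$.
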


\begin{proof}
    The cofiber $\cofib{g}$ in $\Sp_{\geq 0}$ is formed in $\Sp$. There is an exact sequence
    \begin{center}
        \begin{tikzcd}
            \pi_0 \map_{\Sp}(x, y) \arrow[r] & \pi_0 \map_{\Sp}(x, z) \arrow[r] & \pi_0 \map_{\Sp}(x, \cofib g),
        \end{tikzcd}
    \end{center}
    so it suffices to show $\pi_0 \map_{\Sp}(x, \cofib g) = 0$. On the other hand, $\cofib{g}$ is odd by evenness of $y,z$ and surjectivity of $\pi_* g$. An Atiyah--Hirzebruch spectral sequence style argument e.g.~as in \cite[Proposition B.2.3]{quinnZhu2026multiplicativeequivariantthomspectra} allows us to conclude.
\end{proof}

\begin{proposition} \label{proposition: decomposition of MW and MWP}
    There are equivalences
    \[ \MW \simeq \MU \otimes\, {\Sigma_+^{\infty}\fib(\Omega^{\infty}\Sigma^2 \gamma)} \quad \text{resp.} \quad \MWP \simeq \MUP \otimes \,\Sigma_+^{\infty}\fib(\Omega^{\infty}\gamma) \]
    of $\E_4$- resp.~$\E_6$-ring spectra.
\end{proposition}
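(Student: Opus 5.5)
The idea is to split the Thom spectrum $\MW$ along a section of $\Omega^\infty\Sigma^2\gamma$. I will treat $\MW$; the periodic case $\MWP$ is parallel, with $\Omega^\infty$ in place of $\Omega^\infty\Sigma^2$ and $\ku$, $\BUP$ in place of $\Sigma^2\ku$, $\BU$.

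\textbf{Step 1: a structured section.} I first produce a map of spectra $s\colon \Sigma^{2}\ku \to \Sigma^{2}\MU$, or better a suitably connective piece of it, with $\Sigma^2\gamma\circ s\simeq \id$. \cref{lemma: existence of lifts} does not apply directly, because $H^*(\ku;A)$ is not even. I only need a section after applying a finite number of deloopings of $\Omega^\infty$, though. An $\E_4$-structure on the splitting corresponds to $4$-fold loop maps $\BU\to \Omega^\infty\Sigma^2\MU$, that is, maps of connective spectra $\tau_{\geq 4}\Sigma^{6}\ku \simeq \Sigma^4(\tau_{\geq 0}\Sigma^2\ku)\to \tau_{\geq 4}\Sigma^{6}\MU$.

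By Bott periodicity, $\Omega^{\infty}\Sigma^{6}\ku\simeq \BU\langle 6\rangle$, and more generally $\Omega^{\infty-k}\Sigma^2\ku$ for even $k$ is $\BU\langle k+2\rangle$. For $k\le 4$ these spaces (namely $\BU$, $\BSU$, $\BU\langle 6\rangle$) have even, torsion-free integral cohomology, by Singer and Stong, so their cohomology with any finitely generated coefficients is even. In the periodic case, the $k$-fold deloopings of $\BUP=\Omega^\infty\ku$ for $k\le 6$ give $\Z\times\BU$, $\BU$, $\BSU$, $\BU\langle 6\rangle$ and the odd intermediate stages. The even ones are exactly what allows $\E_6$.

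So I apply \cref{lemma: existence of lifts} with $x$ the connective spectrum whose infinite loop space is the $4$-fold (resp.\ $6$-fold) delooping, $g=\tau_{\ge\ast}\Sigma^{\ast}\gamma$ (which is surjective on homotopy since $\gamma$ is), and $f=\id$. This yields an $\E_4$- (resp.\ $\E_6$-)map $\sigma\colon\BU\to\Omega^\infty\Sigma^2\MU$ splitting $\Omega^\infty\Sigma^2\gamma$. The step I expect to be the main obstacle is checking the evenness hypothesis on cohomology with all finitely generated coefficients in exactly this range. The Stong computation of $H^*(\BU\langle 2k\rangle)$ for $2k\le 6$ is where the numbers $4$ and $6$ should come from.

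\textbf{Step 2: splitting the loop space.} Using $\sigma$, the fiber sequence $F\to\Omega^\infty\Sigma^2\MU\to\BU$, with $F=\fib(\Omega^\infty\Sigma^2\gamma)$, splits as an equivalence of $\E_4$-spaces
\[ F\times \BU\xrightarrow{\ \simeq\ }\Omega^\infty\Sigma^2\MU,\qquad (a,b)\mapsto a\cdot\sigma(b), \]
which uses the $\E_\infty$ multiplication on the target. This is an $\E_4$-map because $\sigma$ and the inclusion of $F$ are, and the source and target are $\E_\infty$-spaces. Moreover, the composite with $J\circ\Omega^\infty\Sigma^2\gamma$ is the product of the trivial map on $F$ and $J$ on $\BU$.

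\textbf{Step 3: Thomify.} The Thom spectrum functor is symmetric monoidal from spaces over $\Pic(\Sp)$ to spectra, so it sends $\E_4$-maps over $\Pic(\Sp)$ to $\E_4$-ring maps. This gives
\[ \MW\simeq \Th(J|_{\BU})\otimes\Th(F\to\ast\to\Pic(\Sp))\simeq \MU\otimes\Sigma^\infty_+F \]
as $\E_4$-rings, using $\Th(\BU\xrightarrow{J}\Pic(\Sp))\simeq\MU$. The periodic case gives $\MUP\otimes\Sigma^\infty_+\fib(\Omega^\infty\gamma)$ as $\E_6$-rings, via $\Th(\BUP\to\Pic(\Sp))\simeq\MUP$.
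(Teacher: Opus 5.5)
Your overall architecture matches the paper: an $\E_4$- resp.\ $\E_6$-section of $\Omega^\infty\Sigma^2\gamma$ resp.\ $\Omega^\infty\gamma$, then split and Thomify. Steps 2 and 3 are fine, and they spell out what the paper compresses into ``it suffices to construct an $\E_4$-section''. Step 1, however, has a genuine error.

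A $4$-fold loop map $\BU\to\Omega^\infty\Sigma^2\MU$ is the same as a \emph{pointed map of spaces} $\B^4\BU\simeq\BU\langle 6\rangle\to\Omega^\infty\Sigma^6\MU$. It is not a map of connective spectra $\Sigma^6\ku\to\Sigma^6\MU$. Maps of connective spectra are infinite loop maps and would give an $\E_\infty$-splitting. In fact no spectrum-level section of $\Sigma^6\gamma$ exists: it would exhibit $\ku$ as a retract of $\MU$, but $H_*(\MU;\F_2)$ is concentrated in even degrees while $H_7(\ku;\F_2)\neq 0$.

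The same confusion affects your application of \cref{lemma: existence of lifts}. You take $x$ to be ``the connective spectrum whose infinite loop space is $\BU\langle 6\rangle$'', i.e.\ $x=\Sigma^6\ku$. The lemma's hypothesis then concerns the spectrum cohomology $H^*(\Sigma^6\ku;A)$, which is not even, as you yourself noted a paragraph earlier. The Stong--Singer evenness is a statement about the \emph{space} $\BU\langle 6\rangle$, i.e.\ about the suspension spectrum $\Sigma^\infty\BU\langle 6\rangle$, not about $\Sigma^6\ku$.

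The fix, which is the paper's argument, is to apply \cref{lemma: existence of lifts} with
\begin{itemize}
\item[] $x=\Sigma^\infty\BU\langle 6\rangle$, $y=\Sigma^6\MU$, $z=\Sigma^6\ku$, $g=\Sigma^6\gamma$,
\item[] $f$ the counit $\Sigma^\infty\Omega^\infty\Sigma^6\ku\to\Sigma^6\ku$.
\end{itemize}
The cofiber of $g$ is odd and $\tilde H^*(\BU\langle 6\rangle;A)$ is even, so a lift exists. Its adjoint is a pointed map $s\colon\BU\langle 6\rangle\to\Omega^\infty\Sigma^6\MU$ with $\Omega^\infty\Sigma^6\gamma\circ s\simeq\id$. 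Then $\Omega^4 s$ is an $\E_4$-section of $\Omega^\infty\Sigma^2\gamma$. Applying $\Omega^6$ to the \emph{same} $s$ gives an $\E_6$-section of $\Omega^\infty\gamma\colon\Omega^\infty\MU\to\BUP$.

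So only the single space $\BU\langle 6\rangle$ needs even cohomology, and the intermediate deloopings you list play no role. The numbers $4$ and $6$ just count how many loops separate $\BU$ resp.\ $\BUP$ from $\BU\langle 6\rangle$. With this corrected Step 1, your Steps 2 and 3 go through as written.
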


\begin{proof}
    Let's consider $\MW$. It suffices to construct an $\E_4$-section of $\Omega^{\infty} \Sigma^2 \gamma \colon \Omega^{\infty} \Sigma^2 \MU \to \BU$. To construct this, we will provide a section of the map
    \[ \Omega^{\infty} \Sigma^6 \gamma \colon \Omega^{\infty} \Sigma^6 \MU \longrightarrow \B^4 \BU \simeq \BU \langle 6 \rangle \]
    of pointed spaces and take $\Omega^4$ afterwards. Consider the diagram
    \begin{center}
        \begin{tikzcd}
            & \Omega^{\infty} \Sigma^6 \MU \arrow[d, "\Omega^{\infty}\Sigma^6 \gamma"]
            \\ \BU \langle 6 \rangle \arrow[r, equal] \arrow[ur, dashed] & \BU \langle 6 \rangle
        \end{tikzcd}
    \end{center}
    We obtain such a lift by \cref{lemma: existence of lifts}. Here, the choice of $\gamma$ such that $\pi_* \gamma$ is surjective crucially enters, and moreover, the even cohomology of $\BU\langle 6 \rangle$ was computed by Stong \cite{stong1963BU} and Singer \cite{singer1968connective}, see also \cite[Section 4.5]{andoHopkinsStrickland2001elliptic}. 

    For $\MWP$ we need to provide an $\E_6$-section of $\Omega^{\infty}\gamma \colon \Omega^{\infty} \MU \to \BUP$. But we have provided a section of
    \[ \Omega^{\infty} \Sigma^6 \gamma \colon \Omega^{\infty} \Sigma^6 \MU \longrightarrow \B^4 \BU \simeq \BU \langle 6 \rangle \]
    above and can take $\Omega^6$ of this.
\end{proof}

\begin{remark} \label{remark: same as in HRW}
    A splitting $\MW \simeq \MU \otimes \Sigma_+^{\infty} \fib(\Omega^{\infty} \Sigma^2 \gamma)$ of $\E_2$-ring spectra was already provided by \cite[Theorem 3.2.10]{hahn2024motivicfiltrationtopologicalcyclic} using the exact same argument. Our only observation is that it can be delooped twice more. Nonetheless, we decided to carefully repeat the proof to make sure that the refinement is valid. In particular, one has to be more careful since $\B^4 \BU \simeq \BU\langle 6 \rangle$ is more delicate than $\B^2 \BU \simeq \BSU$.
\end{remark}

\begin{corollary} \label{corollary: idempotents MW and MWP}
    There is an $\E_4$- resp.~$\E_6$-map 
    \[ \MW \longrightarrow \MW \quad \text{resp.} \quad \MWP \longrightarrow \MWP \] 
    which are idempotents underlying and split off $\MU$ resp.~$\MUP$ as an $\E_4$- resp.~$\E_6$-retract.
\end{corollary}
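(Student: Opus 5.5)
The plan is to read the corollary off from \cref{proposition: decomposition of MW and MWP}. I treat $\MW$; the case of $\MWP$ is identical, with $\E_6$ in place of $\E_4$ and $\MUP$, $\BUP$ in place of $\MU$, $\BU$. Write $F \coloneqq \fib(\Omega^\infty\Sigma^2\gamma)$. The proof of \cref{proposition: decomposition of MW and MWP} produces an $\E_4$-section $s\colon \BU \to \Omega^\infty\Sigma^2\MU$ of $\Omega^\infty\Sigma^2\gamma$, obtained as the $4$-fold loops of a pointed section of $\Omega^\infty\Sigma^6\gamma$. Since $\Omega^\infty \Sigma^2 \MU$ is an $\E_\infty$-group, $s$ induces an equivalence of $\E_4$-spaces $\Omega^\infty\Sigma^2\MU \simeq \BU \times F$. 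Under this equivalence the Thom datum restricts to $J$ on $\BU$ and to the trivial map on $F$. Thomifying then gives the $\E_4$-equivalence $\MW \simeq \MU \otimes \Sigma^\infty_+ F$.

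First I construct the two structured maps. The inclusion $\BU \to \Omega^\infty\Sigma^2\MU$, namely $s$, is a map of $\E_4$-spaces over $\Pic(\Sp)$, since $J \circ \Omega^\infty\Sigma^2\gamma \circ s = J$. Functoriality of Thom spectra for $\E_4$-maps over $\Pic(\Sp)$ therefore gives an $\E_4$-map $i\colon \MU = \Th(J) \to \MW$.

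Next, the projection $\Omega^\infty\Sigma^2\MU \simeq \BU \times F \to \BU$ is $\E_4$ and lies over $\Pic(\Sp)$, because the Thom datum on the $F$-factor is trivial. Equivalently, I can use the augmentation $\Sigma^\infty_+ F \to \S$, which is an $\E_4$-map because $F$ is an $\E_4$-space. Tensoring this augmentation with $\MU$ gives an $\E_4$-map $r\colon \MW \simeq \MU\otimes\Sigma^\infty_+F \to \MU$. The composite $r\circ i$ is the Thomification of $\mathrm{pr}\circ s = \id_{\BU}$, so it is equivalent to the identity as an $\E_4$-map.

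I then set $e \coloneqq i\circ r\colon \MW \to \MW$. This is an $\E_4$-map, and $e\circ e \simeq i\circ(r\circ i)\circ r \simeq e$, so $e$ is idempotent, at least up to homotopy on underlying spectra, which is all the statement "idempotents underlying" asks for. Its image is the $\E_4$-retract $\MU$ via $i$ and $r$. For $\MWP$ I run the same argument with the $\E_6$-section of $\Omega^\infty\gamma$ from the proposition.

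The only point needing care is that the trivialization of the Thom datum on the $F$-factor is compatible with the $\E_4$-structure, so that $r$ is genuinely $\E_4$ and $r\circ i\simeq\id$ holds $\E_4$-coherently. I expect to handle this by noting that $F$ is the fiber of the map to $\BU$. The datum restricted to $F$ therefore factors canonically through the point as a map of $\E_4$-spaces, and the equivalence in \cref{proposition: decomposition of MW and MWP} is built precisely from this trivialization.
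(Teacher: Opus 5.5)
Your proposal is correct and is essentially the paper's argument. The paper likewise takes the $\E_4$- (resp.\ $\E_6$-) equivalence $\MW \simeq \MU \otimes \Sigma^\infty_+\fib(\Omega^\infty\Sigma^2\gamma)$ from \cref{proposition: decomposition of MW and MWP}, and builds the idempotent as the augmentation $\Sigma^\infty_+ F \to \S$ followed by the unit $\S \to \Sigma^\infty_+ F$, both tensored with $\MU$; your map $i$, obtained by Thomifying the section, is exactly that unit map under the decomposition.
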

\begin{proof}
    We apply the previous proposition (\cref{proposition: decomposition of MW and MWP}) and the fact that algebras in $\Sc$ are (multiplicatively) augmented, so we construct the idempotents as
    \begin{align*}
        \MW &\simeq \MU \otimes \Sigma_+^{\infty}\fib(\Omega^{\infty}\Sigma^2 \gamma) \to \MU \otimes \Sigma_+^{\infty}* \to \MU \otimes \Sigma_+^{\infty}\fib(\Omega^{\infty}\Sigma^2 \gamma) \simeq \MW
        \\ \MWP &\simeq \MUP \otimes \Sigma_+^{\infty}\fib(\Omega^{\infty}\gamma) \to \MUP \otimes \Sigma_+^{\infty}* \to \MUP \otimes \Sigma_+^{\infty}\fib(\Omega^{\infty}\gamma) \simeq \MWP
    \end{align*}
    by augmentation followed by the unit.
\end{proof}

Combining the two main observations of this section (\cref{theorem: commutative maps from MW and MWP}, \cref{corollary: idempotents MW and MWP}) we are able to produce structured orientations.

\begin{theorem} \label{theorem: E4 or E6 orientations}
    Let $E$ be an $\E_{\infty}$-ring spectrum.
    \begin{enumerate}[(i)]
        \item If $E$ is even, then there exists an $\E_4$-map $\MU \to E$.
        \item If $E$ is even periodic, then there exists an $\E_6$-map $\MUP \to E$.
    \end{enumerate}
    In particular, any even periodic $\E_{\infty}$-ring admits an $\E_6$-$\MU$-orientation.
\end{theorem}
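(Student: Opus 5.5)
The plan is to compose the structured maps produced so far. For (i), let $E$ be even. By \cref{theorem: commutative maps from MW and MWP}(i) there is an $\E_{\infty}$-map $\MW \to E$, and restricting the structure gives in particular an $\E_4$-map. By \cref{corollary: idempotents MW and MWP}, $\MU$ is an $\E_4$-retract of $\MW$, so there is an $\E_4$-map $\MU \to \MW$. The most transparent description uses \cref{proposition: decomposition of MW and MWP}:
\[ \MU \simeq \MU \otimes \Sigma_+^{\infty} * \longrightarrow \MU \otimes \Sigma_+^{\infty}\fib(\Omega^{\infty}\Sigma^2\gamma) \simeq \MW, \]
which is the unit of the $\E_4$-space $\fib(\Omega^\infty\Sigma^2\gamma)$ tensored with $\MU$, and hence an $\E_4$-map. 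The composite $\MU \to \MW \to E$ is then the required $\E_4$-map.

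Part (ii) is identical with the periodic objects. For $E$ even periodic, \cref{theorem: commutative maps from MW and MWP}(ii) supplies an $\E_{\infty}$-map $\MWP \to E$. The $\E_6$-splitting from \cref{corollary: idempotents MW and MWP} gives an $\E_6$-map $\MUP \to \MWP$, and the composite is an $\E_6$-map $\MUP \to E$. For the final clause, precompose with the standard $\E_{\infty}$-map $\MU \to \MUP$, the inclusion of the degree-$0$ summand coming from $\BU \times \{0\} \to \BUP$ on Thom spectra. This gives an $\E_6$-map $\MU \to E$, i.e.\ an $\E_6$-$\MU$-orientation.

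The only point needing care is that the inclusion $\MU \to \MW$ (resp.\ $\MUP \to \MWP$) is genuinely an $\E_4$- (resp.\ $\E_6$-) map under the identification of \cref{proposition: decomposition of MW and MWP}. That identification comes from an $\E_4$-section $s$ of $\Omega^\infty\Sigma^2\gamma$, which splits $\Omega^\infty\Sigma^2\MU \simeq \BU \times \fib$ as $\E_4$-spaces over $\BU$. Thomifying this splitting makes the restriction to $\BU \times *$ exactly the Thom spectrum of $J$ on $\BU$, namely $\MU$, mapping as an $\E_4$-algebra into $\MW$. So in practice I would bypass the tensor decomposition and take the Thomification of $s \colon \BU \to \Omega^\infty\Sigma^2\MU$ directly as the $\E_4$-map $\MU \to \MW$. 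For (ii), the corresponding $\E_6$-section of $\Omega^\infty\gamma$ gives $\MUP \to \MWP$ in the same way, since $\Th(J|_{\BUP}) = \MUP$. No further obstruction theory is needed.
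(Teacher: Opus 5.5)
Your proposal is correct and is essentially the paper's proof. You compose the $\E_4$- (resp.\ $\E_6$-) retract inclusion $\MU \to \MW$ (resp.\ $\MUP \to \MWP$) from \cref{corollary: idempotents MW and MWP} with the $\E_{\infty}$-map of \cref{theorem: commutative maps from MW and MWP}, and for the last clause you precompose with the $\E_{\infty}$-map $\MU \to \MUP$. Your remark that the inclusion can be taken directly as the Thomification of the $\E_4$- (resp.\ $\E_6$-) section of $\Omega^{\infty}\Sigma^2\gamma$ (resp.\ $\Omega^{\infty}\gamma$) is not a different argument; it just unpacks how that splitting is built in \cref{proposition: decomposition of MW and MWP}.
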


\begin{proof}
    We construct them as compositions
    \begin{center}
        \begin{tikzcd}
            \MU \arrow[r] & \MW \arrow[r] & E & \text{resp.} & \MUP \arrow[r] & \MWP \arrow[r] & E
        \end{tikzcd}
    \end{center}
    of maps from \cref{theorem: commutative maps from MW and MWP} and \cref{corollary: idempotents MW and MWP}. Since the preferred map $\MU \to \MUP$ is an $\E_{\infty}$-ring map, we also deduce the last sentence of the assertion.
\end{proof}

One could instead also have run an argument as in \cref{theorem: commutative maps from MW and MWP} by considering $6$-fold deloopings to construct these orientations directly. Nonetheless, we will mimic \cref{theorem: commutative maps from MW and MWP} to prove a $C_2$-equivariant analog and \cref{corollary: idempotents MW and MWP} will be necessary in the proof of the $C_2$-equivariant analog (\cref{theorem: MUR MUPR lifting result}). That's why we are taking the route through Wilson spaces.

As an immediate example we obtain a structured orientation on the Lubin--Tate theories constructed by the Goerss--Hopkins--Miller theorem \cite{rezk1998hopkinsmiller, GoerssHopkins2004Moduli, lurie2018sag}.

\begin{corollary}
    Let \(k\) be a perfect field and \(\Gamma_h\) be a height \(h\) formal group law over \(k\). Let \(E(k,\Gamma_h)\) be the associated Lubin--Tate theory. There exists an \(\E_6\)-ring map \(\MUP\to E(k,\Gamma_h)\). 
\end{corollary}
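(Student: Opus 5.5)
The plan is to apply \cref{theorem: E4 or E6 orientations}(ii) directly to $E = E(k,\Gamma_h)$. That requires two inputs: an $\E_{\infty}$-ring structure on $E(k,\Gamma_h)$, and the fact that it is even periodic in the sense used above, namely even with $\Sigma^2 E \simeq E$ as $E$-modules.

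For the first input, I would invoke the Goerss--Hopkins--Miller theorem (in the form of \cite{GoerssHopkins2004Moduli} or Lurie's version in \cite{lurie2018sag}). It provides $E(k,\Gamma_h)$ as an $\E_{\infty}$-ring, functorially in the pair $(k,\Gamma_h)$ with $k$ perfect.

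For the second input, I would use the known homotopy
\[ \pi_* E(k,\Gamma_h) \cong W(k)[\![u_1,\dots,u_{h-1}]\!][u^{\pm 1}], \qquad |u| = 2, \]
where the $u_i$ sit in degree $0$. This ring is concentrated in even degrees, so $E$ is even. The unit $u \in \pi_2 E$ is invertible, so multiplication by $u$ gives an $E$-module map $\Sigma^2 E \to E$. This map is an isomorphism on homotopy, hence an equivalence of $E$-modules, and so $E$ is even periodic.

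Once both inputs are in place, \cref{theorem: E4 or E6 orientations}(ii) produces the desired $\E_6$-ring map $\MUP \to E(k,\Gamma_h)$, which is the composite $\MUP \to \MWP \to E(k,\Gamma_h)$. I do not expect any real obstacle: the only content is quoting the correct model of Lubin--Tate theory. The one point to watch is which convention of $E(k,\Gamma_h)$ the $\E_{\infty}$-structure refers to. The Goerss--Hopkins--Miller and Lurie versions are both even periodic with the homotopy ring above, so either one suffices.
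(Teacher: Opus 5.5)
Your proposal is correct and matches the paper's argument. The paper states this corollary as an immediate consequence of \cref{theorem: E4 or E6 orientations}(ii), applied to the Goerss--Hopkins--Miller $\E_\infty$-ring $E(k,\Gamma_h)$, which is even periodic because $\pi_*E(k,\Gamma_h) \cong W(k)[\![u_1,\dots,u_{h-1}]\!][u^{\pm 1}]$ with $u$ a unit in degree $2$.
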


\begin{remark}
    In the case that \(k\) is algebraically closed, Burklund--Schlank--Yuan have already shown that there exists an \(\E_\infty\)-ring map \(\MUP\to E(k,\Gamma_h)\) through the chromatic Nullstellensatz \cite[Corollary 8.13]{burklund2022chromaticnullstellensatz}.
\end{remark}

\noindent 

\begin{remark} \label{remark: orientation of MUP Snaith}
    This article is about Snaith's equivalence $\MUP \simeq \Sigma_+^{\infty}\BU[\beta^{-1}] \eqqcolon \MUP_{\Snaith}$ for a certain Bott class $\beta \in \pi_2(\Sigma_+^{\infty}\BU)$, see \cite{snaith1979algebraiccobordism}. As mentioned in \cref{section: intro}, Hahn--Yuan discovered that this equivalence surprisingly does not refine to an equivalence of $\E_{\infty}$-ring spectra. In fact, they show that there does not exist an $\E_5$-equivalence $\MUP \simeq \MUP_{\Snaith}$, and they provide an $\E_2$-equivalence \cite{hahnYuan2020exotic}.

    Hahn--Yuan then ask whether there exists an $\E_{\infty}$-algebra map $\MU \to \MUP_{\Snaith}$. Our results are thus a first step towards answering this question, namely we provide an $\E_6$-algebra map, in fact there even exists an $\E_6$-algebra map $\MUP \to \MUP_{\Snaith}$ by \cref{theorem: E4 or E6 orientations}.
\end{remark}

Our structured orientations came from structured decompositions of $\MW$ and $\MWP$, so it is natural to ask how much these can be refined.

\begin{question}
    We have constructed $\E_4$- and $\E_6$-decompositions
    \[ \MW \simeq \MU \otimes \Sigma_+^{\infty} \fib(\Omega^{\infty} \Sigma^2 \gamma) \quad \& \quad \MWP \simeq \MUP \otimes \Sigma_+^{\infty} \fib(\Omega^{\infty}\gamma). \]
    Do they admit even more structure?
\end{question}

\noindent Since further deloopings of $\BU\langle 6 \rangle$ do not have cohomology concentrated in even degrees \cite{stong1963BU}, the obstruction-theoretic arguments as written in \cref{proposition: decomposition of MW and MWP} do not work anymore.

\subsection{Structured Real orientations}
In \cite{quinnZhu2026multiplicativeequivariantthomspectra} we developed an obstruction theory for lifting non-equivariant structured orientations to structured equivariant orientations. One of our main ingredients to handle the Thom spectrum $\MU_{\R}$ was a $C_2$-equivariant analog of the Wilson space result \cref{theorem: E4 or E6 orientations}. At the time, we were following Hahn--Raksit--Wilson and were under the impression that $\MW$ only splits off $\MU$ as an $\E_2$-retract (\cref{remark: same as in HRW}). With the realization that an $\E_4$-retract is possible as well as the discussion of a periodic version (\cref{theorem: E4 or E6 orientations}), we are thus able to improve upon our results. As an application we will for example be able to produce an $\E_{2\rho}$-algebra structure on $\BPR$, see \cref{theorem: BPR E2rho}.

The Real analogs of $\MW$ and $\MWP$ from \cref{construction: MW MWP} are the following $C_2$-Thom spectra.

\begin{construction}
    Let \( \gamma_{\R} \colon \MUR\to \kuR\) be a map such that $\pi_{* \rho}\gamma_{\R}$ surjective.\footnote{Such a map exists, e.g.~by lifting the Conner--Floyd orientation, see e.g.~\cite[Corollary 6.2.2]{quinnZhu2026multiplicativeequivariantthomspectra}.}
    We consider the $C_2$-Thom spectra
    \begin{align*}
        \tb{\MWR} &\coloneqq \Th_{C_2}\left(\Omega^\infty \Sigma^{\rho}\MU_{\R} \xrightarrow{\Omega^\infty \Sigma^{\rho} \gamma_{\R}} 
    \Omega^\infty \Sigma^{\rho}\ku_{\R} \simeq 
    \BU_{\R} \xrightarrow{\ J_{\R} \ } \ul{\Pic}(\ul{\Sp}^{C_2})\right),
    \\ \tb{\MWPR} &\coloneqq \Th_{C_2}\left(\Omega^\infty \MU_{\R} \xrightarrow{\Omega^\infty \gamma_{\R}} 
    \Omega^\infty \ku_{\R} \simeq 
    \BUP_{\R} \xrightarrow{\ J_{\R} \ } \ul{\Pic}(\ul{\Sp}^{C_2})\right).
    \end{align*}
    The $C_2$-spectrum $\MWR$ was considered by Angelini-Knoll--Kong--Quigley \cite[Construction 5.8]{gabe2025realsyntomiccohomology} and $\MWPR$ is a natural periodic analog.
\end{construction}

\begin{definition} \label{definition: strongly even periodic}
    Let $R \in \Alg_{\E_{\infty}}(\Sp^{C_2})$. It is said to be \tb{strongly even periodic} if it is strongly even and $\Sigma^{\rho}R \simeq R$ of $R$-modules.
\end{definition}

In particular, the underlying spectrum of a strongly even periodic $C_2$-spectrum is an even periodic spectrum.

\begin{theorem} \label{theorem: commutative maps from MWR and MWPR}
    Let $E$ be an $\E_{\infty}^{C_2}$-ring spectrum.
    \begin{enumerate}[(i)]
        \item If $E$ is strongly even, then there exists an $\E_{\infty}^{C_2}$-map $\MWR \to E$.
        \item If $E$ is strongly even periodic, then there exists an $\E_{\infty}^{C_2}$-map $\MWP_{\R} \to E$.
    \end{enumerate}
\end{theorem}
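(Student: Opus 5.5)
We mimic the Thom spectrum argument of \cref{theorem: commutative maps from MW and MWP}(ii) in both cases. By the universal property of $C_2$-Thom spectra (the equivariant analog of \cite[Lemma 3.15]{antolinbarthel2019thom}, available through the parametrized-colimit description of $\Th_{C_2}$), an $\E_{\infty}^{C_2}$-map $\MWR \to E$ resp.~$\MWPR \to E$ is the same as a nullhomotopy, as maps of connective $C_2$-spectra, of
\[ \Sigma^{\rho}\MUR \xrightarrow{\Sigma^\rho\gamma_{\R}} \Sigma^{\rho}\kuR \xrightarrow{J_{\R}} \piccn_{C_2}(\S) \to \piccn_{C_2}(E) \quad\text{resp.}\quad \MUR \xrightarrow{\gamma_{\R}} \kuR \xrightarrow{J_{\R}} \piccn_{C_2}(\S) \to \piccn_{C_2}(E). \]
Here $\piccn_{C_2}$ is the connective genuine $C_2$-spectrum delooping $\ul{\Pic}$. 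The $\E_\infty^{C_2}$-structure, as opposed to a naive $\E_\infty$-structure, is why we must work with genuine $C_2$-spectra and norms.

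\textbf{Step 1: reduce to units.} We use the fiber sequence $\Sigma \gl_1 E \to \piccn_{C_2}(E) \to \ul{\pi}_0 \piccn_{C_2}(E)$, where the last term is a Mackey functor in degree $0$. In case (i) the source $\Sigma^\rho\MUR$ is $1$-connected underlying, and the fixed points of $\Sigma^\rho \MUR$ are connective with $\ul\pi_0=0$. So the composite to the discrete Mackey functor vanishes automatically. In case (ii) the map to $\ul\pi_0$ factors through $\ul\pi_0\kuR$, which is the representation-ring Mackey functor of Real vector spaces. It sends a Real vector bundle of dimension $n$ to the class of $\Sigma^{n\rho}E$, and this class is trivial because $\Sigma^\rho E\simeq E$ as $E$-modules. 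Hence in both cases the composite lifts to a map $X \to \Sigma\gl_1E$, with $X=\Sigma^\rho \MUR$ resp.~$X=\MUR$.

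\textbf{Step 2: kill the lift.} It remains to show that $\pi_0\map_{C_2}(X,\Sigma \gl_1 E)=0$. We use that $\MUR$ admits a Real cell structure built from cells $S^{n\rho}$ (Hu--Kriz/Hill--Meier), which gives a filtration of $\map_{C_2}(X, \Sigma\gl_1 E)$ whose graded pieces are products of $\Sigma^{-n\rho}$-shifts of $\Sigma\gl_1E$. By strong evenness, $\ul\pi_{n\rho-1}E=0$. For $\gl_1E$ the same holds in the relevant degrees $n\rho - 1$ with $n\ge1$, since $\tau_{\geq 1}\gl_1 E\simeq\tau_{\ge1}E$ in the slice/$\rho$-graded range. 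The remaining degree is in case (ii), $n=0$, where $\ul\pi_{-1}\Sigma\gl_1E$ has the form $\ul\pi_0$ of a connective object shifted, and it vanishes. This is exactly the equivariant Atiyah--Hirzebruch argument of \cite[Proposition B.2.3]{quinnZhu2026multiplicativeequivariantthomspectra}, which we would quote directly, adapted to unbounded cells for $\MUR$ in case (ii). We also need the $\lim^1$ terms to vanish, which follows from the finite-type cell structure and the vanishing of the odd groups.

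\textbf{Main obstacle.} The hard step is the genuine-equivariant comparison of $\Sigma\gl_1 E$ with $E$ in the range needed for strong evenness: $\gl_1$ only agrees with $E$ above degree $0$, and its $\rho$-graded homotopy involves norms. We would handle this by checking the vanishing on underlying spectra and on geometric fixed points separately via the isotropy separation cofiber sequence, using that $\Res$ of $\gl_1$ is $\gl_1$ of $\Res$.
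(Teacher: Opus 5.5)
Your proposal follows the paper's proof: the universal property of $C_2$-Thom spectra reduces the problem to nullhomotopies into $\ul{\piccn}(E)$; the $\ul{\pi}_0$-obstruction vanishes (for $\MWR$ by connectivity of $\Sigma^{\rho}\MUR$, for $\MWPR$ by strong even periodicity, where the paper additionally proves that $J_{\R}$ sends $1\in\pi_0\ko$ to $\S^{\rho}$ via compatibility with norms, which you only assert); and the lift $X\to\Sigma\gl_1E$ dies by oddness, which the paper gets by citing \cite[Theorem 5.3.8]{quinnZhu2026multiplicativeequivariantthomspectra} and Greenlees' gap criterion rather than running the cell filtration by hand. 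One repair is needed in your Step 2: $\tau_{\geq 1}\gl_1E\simeq\tau_{\geq1}E$ is false as spectra, and $\Phi^{C_2}$ does not commute with $\gl_1$, so your fallback plan would not work either. Instead, deduce $\pi^{C_2}_{n\rho-1}\gl_1E=0$ as follows. For $n\geq 2$, $S^{n\rho-1}$ is $C_2$-connected, so use the $C_2$-equivariant translation equivalence between the unit components and the zero components of $\Omega^{\infty}E$. For $n=1$, use the cofiber sequence $C_{2+}\to S^0\to S^{\sigma}$ together with $\pi_1^eE=0$ and injectivity of $(\pi_0^{C_2}E)^{\times}\to(\pi_0^eE)^{\times}$.
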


\begin{proof}
    Part (i) already appeared in \cite[Theorem 5.10]{gabe2025realsyntomiccohomology}. Let us only discuss (ii), whose proof strategy also applies to (i). We essentially mimic our proof in \cref{theorem: commutative maps from MW and MWP}.

    An $\E_{\infty}^{C_2}$-ring map $\MWPR \to E$ is the same datum as a nullhomotopy of
    \begin{center}
        \begin{tikzcd}
            \MUR \arrow[r, "\gamma_{\R}"] & \kuR \arrow[r, "J_{\R}"] & \ul{\operatorname{pic}}(\S) \arrow[r] & \ul{\operatorname{pic}}(E)
        \end{tikzcd}
    \end{center}
    by \cite[Lemma 4.2.7]{quinnZhu2026multiplicativeequivariantthomspectra}, where $\ul{\piccn}$ denotes the corresponding connective $C_2$-spectrum. So we need to show that this composite is nullhomotopic. There is a fiber sequence
    \begin{center}
        \begin{tikzcd}
            \Sigma \gl_1 E \simeq \tau_{\geq 1}(\ul{\operatorname{pic}}(E)) \arrow[r] & \ul{\operatorname{pic}}(E) \arrow[r] & \ul{\pi}_0(\ul{\operatorname{pic}}(E))
        \end{tikzcd}
    \end{center}
    coming from the standard $t$-structure of $\Sp^{C_2}$. The first equivalence is by $\tau_{\geq 1} \ul{\piccn}(E) \simeq \Sigma \gl_1 E$ using the natural equivalence $\Omega \piccn(\C) \simeq \aut_{\C}(\one_{\C})$ for any symmetric monoidal $\infty$-category $\C$, see e.g.~\cite[Section 2.2]{mathewstojanoska2016tmf}. 
    We now wish to show that our aforementioned composite factors through $\Sigma \gl_1 E \to \ul{\operatorname{pic}}(E)$. The composite
    \begin{center}
        \begin{tikzcd}
            \ku_{\R} \arrow[r, "J_{\R}"] & \ul{\piccn}(\S) \arrow[r] & \ul{\piccn}(E) \arrow[r] & \ul{\pi}_0(\ul{\piccn}(E))
        \end{tikzcd}
    \end{center}
    factors through $\kuR \to \tau_{\leq 0} \kuR \simeq \ul{\pi}_0 \kuR$ since $\ul{\pi}_0(\ul{\piccn}(E))$ is $0$-truncated. The map $\ul{\pi}_0 \kuR \to \ul{\pi}_0(\ul{\piccn}(E))$ is now nullhomotopic, as we may check levelwise:
    \begin{itemize}
        \item On $\pi_0^e$ we obtain the $J$-homomorphism $\pi_0 \ku \to \pi_0(\piccn E^e), \ 1 \mapsto \S^2 \otimes E$, which is null by even periodicity of $E^e$.
        \item On $\pi_0^{C_2}$ we obtain the map $\pi_0 \ko \to \pi_0(\piccn E), \ 1 \mapsto \S^{\rho} \otimes E$ which is null by strong even periodicity of $E$. 
        
        Let us justify that this map sends $1$ to $\S^{\rho} \otimes E$. It factors through $\pi_0(\piccn \S_{C_2}) \to \pi_0(\piccn E)$, so it suffices to show that $\pi_0 \ko \to \pi_0(\piccn \S_{C_2})$ sends $1$ to $\S^{\rho}$. Since the Real $J$-homomorphism is an $\E_{\infty}^{C_2}$-algebra map \cite[Appendix A.2]{quinnZhu2026multiplicativeequivariantthomspectra}, the diagram
        \begin{center}
            \begin{tikzcd}
                \pi_0 \ko \arrow[r] & \pi_0(\piccn \S_{C_2}) 
                \\ \pi_0 \ku \arrow[u, "\Nm_{e}^{C_2}"] \arrow[r] & \pi_0(\piccn \S) \arrow[u, "\Nm_{e}^{C_2}", swap] 
            \end{tikzcd}
        \end{center}
        commutes. The norm map $\Nm_{e}^{C_2} \colon \pi_0 \ku \to \pi_0 \ko$ sends $1$ to $2$. Going the other way, the composite is given by $1 \mapsto \S^2 \mapsto \S^{2\rho}$. We deduce $2 \mapsto \S^{2\rho}$. On the other hand, $\pi_0(\piccn \S_{C_2}) \cong \Z \times \Z$ is known to only consist of the $\RO(C_2)$-graded spheres by \cite[Theorem 5]{tomDieck1978homotopy} and \cite[Theorem 0.1]{fauskLewisMay2001picardequivariant}.  Thus, we must have $1 \mapsto \S^{\rho}$.
    \end{itemize}
    Thus, there exists a factorization
    \begin{center}
        \begin{tikzcd}
            & & & \Sigma \gl_1 E \arrow[d]
            \\ \MUR \arrow[r, "\gamma_{\R}", swap] & \kuR \arrow[r, "J_{\R}", swap] \arrow[urr, bend left, "\exists", dashed] & \ul{\piccn}(\S) \arrow[r] & \ul{\piccn}(E)
        \end{tikzcd}
    \end{center}
    By \cite[Theorem 5.3.8]{quinnZhu2026multiplicativeequivariantthomspectra} the shift of the $C_2$-mapping spectrum $\Omega \ul{\map}_{\ul{\Sp}^{C_2}}(\MUR, \Sigma \gl_1 E)$ is strongly even. By Greenlees' gap characterization \cite[Lemma 1.2]{greenleesfour} we conclude
    \[ \pi_0^{C_2} \ul{\map}_{\ul{\Sp}^{C_2}}(\MUR, \Sigma \gl_1 E) \cong \pi_{-1}^{C_2} \Omega \ul{\map}_{\ul{\Sp}^{C_2}}(\MUR, \Sigma \gl_1 E) \cong 0. \]
    Hence, any map $\MUR \to \Sigma \gl_1 E$ is nullhomotopic.
\end{proof}

Similar to the non-equivariant case (\cref{theorem: commutative maps from MW and MWP}), let us stress again that the $\E_{\infty}^{C_2}$-$\MWR$-orientation was already known due to Angelini-Knoll--Kong--Quigley \cite[Theorem 5.10]{gabe2025realsyntomiccohomology}, while the $\E_{\infty}^{C_2}$-$\MWPR$-orientation is really a new result that required a number of equivariant theorems, such as those developed in \cite{quinnZhu2026multiplicativeequivariantthomspectra}.

\begin{proposition} \label{proposition: decomposition of MWR and MWPR}
    There are equivalences
    \[ \MWR \simeq \MUR \otimes \Sigma_+^{\infty}\fib(\Omega^{\infty}\Sigma^{\rho} \gamma_{\R}) \quad \text{resp.} \quad \MWPR \simeq \MUPR \otimes \Sigma_+^{\infty}\fib(\Omega^{\infty}\gamma_{\R}) \]
    of $\E_{2\rho}$- resp.~$\E_{3\rho}$-ring spectra.
\end{proposition}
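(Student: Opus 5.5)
We follow the non-equivariant argument of \cref{proposition: decomposition of MW and MWP} and construct structured sections of $\Omega^{\infty}\Sigma^{\rho}\gamma_{\R}$ and $\Omega^{\infty}\gamma_{\R}$. Suppose $s$ is an $\E_{2\rho}$-map of $2\rho$-loop spaces splitting $\Omega^\infty\Sigma^\rho\gamma_\R \colon \Omega^\infty\Sigma^\rho\MUR \to \BUR$. Then the Thom spectrum functor gives the decomposition. Write $F \coloneqq \fib(\Omega^{\infty}\Sigma^{\rho}\gamma_{\R})$. Using $s$, the $2\rho$-loop map splits $\Omega^\infty\Sigma^\rho\MUR \simeq \BUR \times F$ as $\E_{2\rho}$-spaces over $\BUR$, where the map to $\ul{\Pic}$ factors through the projection to $\BUR$. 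Since $\Th_{C_2}$ is symmetric monoidal, this yields
\[ \MWR \simeq \Th_{C_2}(J_\R)\otimes \Sigma^\infty_+F \simeq \MUR\otimes\Sigma^\infty_+F \]
as $\E_{2\rho}$-rings. Here $\Th_{C_2}(\BUR\to\ul{\Pic})\simeq\MUR$, and the same holds with $\BUPR$ and $\MUPR$ in the periodic case.

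To produce the sections, we deloop. We have $\B^{2\rho}\BUR \simeq \Omega^\infty\Sigma^{3\rho}\kuR$, and $\Omega^\infty\Sigma^{3\rho}\gamma_\R\colon \Omega^\infty\Sigma^{3\rho}\MUR \to \Omega^\infty\Sigma^{3\rho}\kuR$ deloops $\Omega^\infty\Sigma^\rho\gamma_\R$ $2\rho$-fold. It therefore suffices to find a pointed $C_2$-equivariant section of this map and apply $\Omega^{2\rho}$. For $\MWPR$ we take $\Omega^{3\rho}$ of the same section, because $\Omega^{3\rho}\Omega^\infty\Sigma^{3\rho}\kuR\simeq\Omega^\infty\kuR\simeq\BUPR$. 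This mirrors the non-equivariant step of $\Omega^4$ versus $\Omega^6$ applied to $\BU\langle 6\rangle$.

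Producing this section is the main step. We would prove an equivariant analogue of \cref{lemma: existence of lifts} at the level of connective $C_2$-spectra, after first passing to connective covers so that maps of infinite loop $C_2$-spaces correspond to maps of connective $C_2$-spectra. Let $x \coloneqq \Sigma^{\infty}$-free model, namely the connective $C_2$-spectrum $\tau_{\geq 0}\Sigma^{3\rho}\kuR$. Let $g \coloneqq \tau_{\geq0}\Sigma^{3\rho}\gamma_\R$. Since $\pi_{*\rho}\gamma_\R$ is surjective and both sides are strongly even, $\cofib(g)$ is strongly odd, i.e.\ $\Sigma^{-\rho}$ of a strongly even spectrum shifted by one. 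We then need $\pi_0^{C_2}\map(x,\cofib g)=0$. We would obtain this by the argument of \cite[Proposition B.2.3]{quinnZhu2026multiplicativeequivariantthomspectra}: an equivariant Atiyah--Hirzebruch (slice) argument, combined with Greenlees' gap characterization as in \cref{theorem: commutative maps from MWR and MWPR}, reduces the vanishing to $\uHZ\otimes x$ (with coefficients) being a sum of $\rho$-graded spheres $S^{n\rho}$.

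The expected obstacle is this last input: the Real version of Stong's and Singer's computation, namely that $\Sigma^{3\rho}\kuR$ has homology concentrated in regular-representation degrees (equivalently, that the Real $\BU\langle6\rangle$ is "strongly even"). We would try to deduce this from the non-equivariant evenness of $H^*(\BU\langle 6\rangle)$ together with a Hill--Meier-type detection principle: a slice-bounded-below $C_2$-spectrum whose underlying homology is even and whose geometric fixed points have the matching $\F_2$-homology is a $\uHZ$-sum of $S^{n\rho}$. Concretely, the second condition amounts to comparing $H_*$ of $\BO\langle 3\rangle$-type fixed points with the halved degrees of $H_*\BU\langle 6\rangle$. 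If this direct comparison is too delicate, the fallback is to run the obstruction argument directly on the Thom spectrum side, via \cref{theorem: chili thom lifting} applied to the already-constructed non-equivariant $\E_4$- and $\E_6$-sections from \cref{proposition: decomposition of MW and MWP}.
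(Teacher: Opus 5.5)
Your reduction is fine: a $2\rho$-loop section of $\Omega^\infty\Sigma^\rho\gamma_{\R}$ (resp.\ a $3\rho$-loop section of $\Omega^\infty\gamma_{\R}$) Thomifies to the claimed splitting, exactly as in the non-equivariant case. The gap is in producing that section.

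First, you pose the lifting problem for connective $C_2$-spectra with $x=\tau_{\geq 0}\Sigma^{3\rho}\kuR$. That asks for a spectrum-level section of $\Sigma^{3\rho}\gamma_{\R}$, and no such section exists even underlying: $\ku$ is not a retract of $\MU$, since $Q_2$ acts trivially on $H^*(\MU;\F_2)$ but not on $H^*(\ku;\F_2)$. Likewise, the homology of the spectrum $\Sigma^{3\rho}\kuR$ is not concentrated in $\rho$-degrees; this is the paper's footnote on why the argument fails for $\ku$. So your ``equivalently'' conflates the spectrum with the space.

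The correct setup is a pointed $C_2$-map out of $\BUR\langle 3\rho\rangle=\Omega^\infty\Sigma^{3\rho}\kuR$, i.e.\ an equivariant \cref{lemma: existence of lifts} with $x=\Sigma^\infty\BUR\langle 3\rho\rangle$. The obstruction argument then needs $\uHZ\otimes\Sigma^\infty\BUR\langle 3\rho\rangle$ to be a finite-type sum of $\uHZ\otimes S^{n\rho}$, i.e.\ a Real Stong--Singer theorem. The paper does not prove this. \cref{remark: chqs} says this Bredon computation is separate forthcoming work, and that the present argument is an independent route around it. Your detection principle is not an established result. As sketched, an $\F_2$-comparison of fixed points with a $\mathrm{BSpin}$-type space would not control the integral Bredon homology. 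So your main route is conditional on exactly its hardest input.

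The missing idea is to lift maps \emph{out of} the Wilson Thom spectra, instead of constructing sections of spaces. The paper lifts the non-equivariant $\E_4$/$\E_6$ idempotents of \cref{corollary: idempotents MW and MWP} by the argument of \cite[Proposition 6.1.4]{quinnZhu2026multiplicativeequivariantthomspectra}, i.e.\ through \cref{theorem: chili thom lifting} with source $\MWR$ resp.\ $\MWPR$.
\begin{itemize}
\item Hypothesis (i) there is supplied by the $\E_\infty^{C_2}$-maps out of $\MWR$, $\MWPR$ into strongly even (periodic) targets from \cref{theorem: commutative maps from MWR and MWPR}. This is the new input of the section, and your proposal never uses it.
\item Hypothesis (ii) then concerns $\B^{2\rho}\Omega^\infty\Sigma^\rho\MUR$ resp.\ $\B^{3\rho}\Omega^\infty\MUR$. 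Both are essentially the Real Wilson space $\Omega^\infty\Sigma^{3\rho}\MUR$, not $\BUR\langle 3\rho\rangle$.
\end{itemize}
Your fallback does not achieve this. Applying \cref{theorem: chili thom lifting} to the sections $\MU\to\MW$, $\MUP\to\MWP$ puts the Thom spectrum over $\BUR$ resp.\ $\BUP_{\R}$ in the source. Hypothesis (ii) is then again about $\B^{2\rho}\BUR\simeq\B^{3\rho}\BUP_{\R}\simeq\BUR\langle 3\rho\rangle$, which is the same obstacle.
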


\begin{proof}
    Provided the $\E_{\infty}^{C_2}$-orientations from \cref{theorem: commutative maps from MWR and MWPR} we can now lift the idempotents from \cref{corollary: idempotents MW and MWP} with the exact same argument as in \cite[Proposition 6.1.4]{quinnZhu2026multiplicativeequivariantthomspectra}.
\end{proof}

\begin{corollary} \label{corollary: idempotents MWR and MWPR}
    There is an $\E_{2\rho}$- resp.~$\E_{3\rho}$-map 
    \[ \MWR \longrightarrow \MWR \quad \text{resp.} \quad \MWPR \longrightarrow \MWPR \] 
    which are idempotents underlying and split off $\MUR$ resp.~$\MUPR$ as an $\E_{2\rho}$- resp.~$\E_{3\rho}$-retract.
\end{corollary}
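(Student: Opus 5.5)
This follows the non-equivariant \cref{corollary: idempotents MW and MWP} verbatim, with \cref{proposition: decomposition of MWR and MWPR} as input. I would first fix the $\E_{2\rho}$-equivalence
\[ \MWR \simeq \MUR \otimes \Sigma_+^{\infty}\fib(\Omega^{\infty}\Sigma^{\rho}\gamma_{\R}) \]
and the $\E_{3\rho}$-equivalence
\[ \MWPR \simeq \MUPR \otimes \Sigma_+^{\infty}\fib(\Omega^{\infty}\gamma_{\R}). \]
Next, observe that $F \coloneqq \fib(\Omega^{\infty}\Sigma^{\rho}\gamma_{\R})$ is a pointed $C_2$-space with an $\E_{2\rho}$-structure (resp.~$\E_{3\rho}$ in the periodic case). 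Consequently $\Sigma_+^{\infty} F$ is an $\E_{2\rho}$-algebra in $\Sp^{C_2}$ that is canonically augmented: the map $F \to *$ is terminal in $C_2$-spaces, hence a map of $\E_{V}$-algebras for every $V$, and the unit $* \to F$ is likewise structured. This is the Real analog of the remark that algebras in $\Sc$ are augmented. Applying $\Sigma^{\infty}_+$ and tensoring with $\MUR$, which is symmetric monoidal and so preserves $\E_V$-algebra maps, gives $\E_{2\rho}$-maps
\[ \MWR \simeq \MUR \otimes \Sigma_+^{\infty}F \longrightarrow \MUR \otimes \Sigma^{\infty}_+ * \simeq \MUR \longrightarrow \MUR\otimes \Sigma_+^{\infty}F \simeq \MWR. \]
The periodic case is identical with $\E_{3\rho}$ and $\MUPR$.

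The composite $\MUR \to \MWR \to \MUR$ is the identity because the unit followed by the augmentation $* \to F \to *$ is the identity. Hence $\MUR$ is an $\E_{2\rho}$-retract of $\MWR$, and the composite $e\colon \MWR \to \MWR$ is an idempotent: $e\circ e = \iota \circ (r\circ\iota)\circ r = \iota\circ r = e$, already at the level of $\E_{2\rho}$-maps up to homotopy. The same argument gives the periodic statement.

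It remains to address "idempotents underlying", meaning that on underlying spectra $e$ recovers the $\E_4$- resp.~$\E_6$-idempotent of \cref{corollary: idempotents MW and MWP}. This is the main obstacle, and it depends on how \cref{proposition: decomposition of MWR and MWPR} was constructed. Since that decomposition was obtained by lifting the non-equivariant idempotents as in \cite[Proposition 6.1.4]{quinnZhu2026multiplicativeequivariantthomspectra}, I would check that $\Res_e^{C_2}$ carries the Real splitting to the non-equivariant splitting of \cref{proposition: decomposition of MW and MWP}. Restriction sends $F$ to $\fib(\Omega^{\infty}\Sigma^2\gamma)$, and it is compatible with Thom spectra and with augmentations, so the restricted idempotent agrees with the non-equivariant one. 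If more care is needed, I would use the uniqueness part of the lifting result \cref{theorem: chili thom lifting}, with target $E = \MWR$, which is strongly even. That result pins down the $\E_{2\rho}$-lift of the underlying $\E_4$-idempotent, so the map constructed above must be that lift.
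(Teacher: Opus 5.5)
Your construction is the paper's: take the $\E_{2\rho}$- resp.~$\E_{3\rho}$-splitting of \cref{proposition: decomposition of MWR and MWPR}, then compose the augmentation with the unit, using that algebras in $C_2$-spaces are augmented. Your final paragraph is unnecessary: ``idempotents underlying'' means the same as in the non-equivariant \cref{corollary: idempotents MW and MWP}, where there is nothing to restrict to, namely that the maps are idempotent on underlying ($C_2$-)spectra, and your computation $e\circ e = \iota r\iota r = \iota r = e$ already gives this, so no comparison with $\Res_e^{C_2}$ or appeal to uniqueness of lifts is needed.
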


\begin{proof}
    We apply the previous proposition (\cref{proposition: decomposition of MWR and MWPR}) and the fact that algebras in $\Sc_{C_2}$ are (multiplicatively) augmented, so we construct the idempotents as
    \begin{align*}
        \MWR &\simeq \MUR \otimes \Sigma_+^{\infty}\fib(\Omega^{\infty}\Sigma^{\rho} \gamma_{\R}) \to \MUR \otimes \Sigma_+^{\infty}* \to \MUR \otimes \Sigma_+^{\infty}\fib(\Omega^{\infty}\Sigma^{\rho} \gamma_{\R}) \simeq \MWR
        \\ \MWPR &\simeq \MUPR \otimes \Sigma_+^{\infty}\fib(\Omega^{\infty}\gamma_{\R}) \to \MUPR \otimes \Sigma_+^{\infty}* \to \MUPR \otimes \Sigma_+^{\infty}\fib(\Omega^{\infty}\gamma_{\R}) \simeq \MWPR
    \end{align*}
    by augmentation followed by the unit.
\end{proof}

\begin{theorem} \label{theorem: MUR MUPR lifting result}
    Let $E$ be an $\E_{\infty}^{C_2}$-ring spectrum.
    \begin{enumerate}[(i)]
        \item If $E$ is strongly even, then any $\E_{2n}$-map $\MU \to \Res_e^{C_2} E$ lifts uniquely to an $\E_{n\rho}$-map $\MUR \to E$ for $n \leq 2$.
        \item If $E$ is strongly even periodic, then any $\E_{2n}$-map $\MUP \to \Res_e^{C_2} E$ lifts uniquely to an $\E_{n\rho}$-map $\MUPR \to E$ for $n \leq 3$.
    \end{enumerate}
\end{theorem}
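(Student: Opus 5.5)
The plan is to apply \cref{theorem: chili thom lifting} with $R = \S_{C_2}$, using $\MUR$ and $\MUPR$ as equivariant Thom spectra.

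\textbf{Thom spectrum descriptions.} $\MUR \simeq \Th_{C_2}(\BU_{\R} \to \ul{\Pic}(\ul{\Sp}^{C_2}))$ via the Real $J$-homomorphism. Since $\BU_{\R} \simeq \Omega^{\infty}\Sigma^{\rho}\kuR$ is an infinite loop $C_2$-space, this is in particular an $n\rho$-loop map for every $n$. Likewise $\MUPR \simeq \Th_{C_2}(\BUP_{\R} \to \ul{\Pic}(\ul{\Sp}^{C_2}))$, with $\BUP_{\R} \simeq \Omega^{\infty}\kuR$.

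\textbf{Hypothesis (i): structured orientations.} We need $\E_{n\rho}$-maps $\MUR \to E$ for $n\le 2$ and $\MUPR \to E$ for $n\le 3$. Compose the $\E_{2\rho}$- resp.~$\E_{3\rho}$-inclusion of the retract
\[ \MUR \to \MWR \quad \text{resp.} \quad \MUPR \to \MWPR \]
from \cref{corollary: idempotents MWR and MWPR} with the $\E_{\infty}^{C_2}$-maps $\MWR \to E$ resp.~$\MWPR \to E$ of \cref{theorem: commutative maps from MWR and MWPR}. The composite is an $\E_{2\rho}$- resp.~$\E_{3\rho}$-map, and restricting the operad gives the required $\E_{n\rho}$-orientation for all smaller $n$. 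This is exactly where the improvement over our earlier paper enters: previously only an $\E_{\rho}$-retract was available.

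\textbf{Hypothesis (ii): homology of the deloopings.} We must check that $\uHZ \otimes \B^{n\rho}X$ is a sum of $\rho$-graded spheres, finitely many in each degree. For $X = \BU_{\R}$ we have
\[ \B^{n\rho}\BU_{\R} \simeq \Omega^{\infty}\Sigma^{(n+1)\rho}\kuR, \]
so we need $n \le 2$. For $X = \BUP_{\R}$ we have
\[ \B^{n\rho}\BUP_{\R} \simeq \Omega^{\infty}\Sigma^{n\rho}\kuR, \]
so we need $n \le 3$. In both cases the spaces in question are $\BU_{\R}$, $\BSU_{\R}$ and $\BU_{\R}\langle 6\rangle$ (the connective covers in $\rho$-multiples).

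\textbf{Main obstacle.} I expect this homology computation to be the crux. The plan is to lift the non-equivariant facts that $H_*(\BU)$, $H_*(\BSU)$ and $H_*(\BU\langle 6\rangle)$ are even and torsion-free. These are due to Stong, Singer and Ando--Hopkins--Strickland, and they are precisely the input used in \cref{proposition: decomposition of MW and MWP}. The lift to the $\uHZ$-statement should go through the Hill--Meier type criterion: a $\uHZ$-module that is strongly even with free, even underlying homology splits into $\rho$-graded spheres. Strong evenness would come either from a Real cell structure or from the corresponding statement in \cite{quinnZhu2026multiplicativeequivariantthomspectra}. The unboundedness of $\BUP_{\R}$ only adds $\Z$-many components, each equivalent to $\BU_{\R}$. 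The case $n=3$ for $\MUPR$ needs $\B^{3\rho}\BUP_{\R} \simeq \BU_{\R}\langle 6\rangle$, which is exactly the last case where evenness holds. This matches the bound $n \le 3$, just as $n \le 2$ matches for $\MUR$.

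\textbf{Conclusion.} With both hypotheses verified, \cref{theorem: chili thom lifting} gives that every $\E_{2n}$-map lifts uniquely to an $\E_{n\rho}$-map, proving (i) and (ii).
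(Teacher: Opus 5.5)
\textbf{There is a genuine gap, in hypothesis (ii) of \cref{theorem: chili thom lifting}.} Your treatment of hypothesis (i) is fine. The problem is that hypothesis (ii) is not established in exactly the new cases. For $n=2$ in (i) and $n=3$ in (ii), applying \cref{theorem: chili thom lifting} to $\MUR$ and $\MUPR$ requires $\uHZ\otimes\Sigma^\infty_+\BUR\langle 3\rho\rangle$, with $\BUR\langle 3\rho\rangle=\Omega^\infty\Sigma^{3\rho}\kuR$, to be a finite-type sum of $\rho$-graded spheres. The lower cases only involve $\BUR$ and $\BSU_{\R}$.

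Lifting the non-equivariant evenness of $H_*(\BU\langle 6\rangle)$ does not give this. A splitting criterion for $\uHZ$-modules needs strong evenness as input. That means genuinely $C_2$-fixed-point information, such as the vanishing of $\pi^{C_2}_{k\rho-1}$, $\pi^{C_2}_{k\rho-2}$ and $\pi^{C_2}_{k\rho-3}$, which even, free underlying homology does not control. No $\rho$-cell structure on $\BUR\langle 3\rho\rangle$ is available to supply it either. The paper says explicitly in \cref{remark: chqs} that this Bredon computation is the subject of separate forthcoming work, and that the present argument is independent of it. As written, your proof rests on an unproven input at its crux.

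\textbf{How the paper avoids it.} The paper applies the lifting theorem to $\MWR$ and $\MWPR$ instead of $\MUR$ and $\MUPR$.
\begin{itemize}
\item Hypothesis (i) holds for every $n$, by the $\E_\infty^{C_2}$-maps of \cref{theorem: commutative maps from MWR and MWPR}.
\item The $n\rho$-fold deloopings of the bases $\Omega^\infty\Sigma^\rho\MUR$ and $\Omega^\infty\MUR$ are Real Wilson spaces $\Omega^\infty\Sigma^{k\rho}\MUR$. Their $\uHZ$-homology is $\rho$-free for every $k$, which is the point of the Wilson-space construction.
\item Hence restriction $\Map_{\E_{n\rho}}(\MWR,E)\to\Map_{\E_{2n}}(\MW,\Res_e^{C_2}E)$ is an equivalence for all $n$.
\end{itemize}
The splitting of \cref{corollary: idempotents MWR and MWPR} is then used to transfer this lifting property, not merely to produce an orientation. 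Take the $\E_{2\rho}$-maps $s_{\R}\colon\MUR\to\MWR$ and $r_{\R}\colon\MWR\to\MUR$ with $r_{\R}s_{\R}\simeq\id$, restricting to the $\E_4$-splitting $(s,r)$. For $n\le 2$, the restriction map for $\MUR$ is then a retract of the one for $\MWR$, hence an equivalence.

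Concretely, for existence, lift $f\circ r$ to $\tilde g$ and take $\tilde g\circ s_{\R}$. For uniqueness, suppose $g_1$ and $g_2$ both lift $f$. Then $g_1r_{\R}$ and $g_2r_{\R}$ both lift $fr$, so they agree, and $g_1\simeq g_1r_{\R}s_{\R}\simeq g_2r_{\R}s_{\R}\simeq g_2$. The same argument works with the $\E_{3\rho}$-splitting for $\MUPR$.

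In this route, the bounds $n\le 2$ and $n\le 3$ come only from how structured the splitting is. Ultimately that traces back to the non-equivariant evenness of $\BU\langle 6\rangle$, not to any equivariant computation for $\BUR\langle 3\rho\rangle$.
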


\begin{proof}
    Given the $\E_{\infty}^{C_2}$-orientations from \cref{theorem: commutative maps from MWR and MWPR} and the $\E_{2\rho}$- and $\E_{3\rho}$-splittings from \cref{corollary: idempotents MWR and MWPR} we can run the same argument as in \cite[Theorem 6.2.1]{quinnZhu2026multiplicativeequivariantthomspectra}
\end{proof}

\begin{remark} \label{remark: lower Erho okay}
    In particular, any strongly even $\E_{\infty}^{C_2}$-ring spectrum admits an $\E_{2\rho}$-$\MU_{\R}$-orientation and any strongly even periodic $\E_{\infty}^{C_2}$-ring spectrum admits an $\E_{3\rho}$-$\MU_{\R}$-orientation using \cref{theorem: E4 or E6 orientations}.
\end{remark}

This allows us to improve all the consequences that we obtained in \cite{quinnZhu2026multiplicativeequivariantthomspectra}. However, we need to exercise care: The lifting in \cite{quinnZhu2026multiplicativeequivariantthomspectra} was immediate to do: Let $E$ be a strongly even $\E_{\infty}^{C_2}$-ring spectrum. Using a result from Chadwick--Mandell \cite[Theorem 1.2]{chadwickmandell} we may lift any complex orientation of $\Res_e^{C_2}E$ to an $\E_2$-orientation of $\Res_e^{C_2}E$, which we may then lift to an $\E_{\rho}$-orientation of $E$ by \cref{remark: lower Erho okay}, see \cite[Corollary 6.2.2]{quinnZhu2026multiplicativeequivariantthomspectra}. However, Chadwick--Mandell's result does not generalize to $\E_4$-orientations \cite[Theorem 1.4]{chadwickmandell}, so one needs to input additional (typically hard) theorems to obtain an $\E_4$-orientation on the underlying.

\begin{example} \label{example: E2rho maps}
    In \cite[Corollary 6.2.3, Corollary 6.2.6]{quinnZhu2026multiplicativeequivariantthomspectra} we obtained $\E_{\rho}$-versions of all of the following. We can lift the $\E_4$- and the $\E_6$-orientations from \cref{theorem: E4 or E6 orientations}, but there are also other explicit orientations from the literature that we can lift.
    \begin{enumerate}[(i)]
        \item Let $k$ be a perfect field of characteristic $2$ and $\Gamma_h$ be a formal group law of height $h$ over $k$. Hahn--Shi constructed Real orientations of Lubin--Tate theories $\MUR \to E(k, \Gamma_h)$ in \cite[Theorem 1.1]{hahnRealOrientationsLubin2020}, connecting the obstruction-theoretic $C_2$-action on $E(k, \Gamma_h)$ with the geometry of complex conjugation. In particular, they showed that $E(k, \Gamma_h)$ is strongly even \cite[Theorem 1.9]{hahnRealOrientationsLubin2020}.

        In their work on the telescope conjecture, Burklund--Hahn--Levy--Schlank constructed certain $\E_4$-orientations on $E(\overline{\F}_p,\Gamma_h)$, see \cite[Corollary 5.12]{BHLS}.
        Moreover, in his work on elliptic genera, Senger also obtained $\E_{\infty}$-$\MU$-orientations of height \(\leq 2\) Lubin--Tate theories $E(k,\Gamma_h)$ for \(k\subseteq \overline{\F}_p\) a field of characteristic \(p\) which is algebraic over $\F_p$, see \cite[Corollary 1.5]{sengerleveln}.
        
        Moving to $\MUP$-orientations, Balderrama has constructed $\E_{\infty}$-\(\MUP\)-orientations for height \(\leq 2\) Lubin--Tate theories \cite[Theorem 6.5.3]{balderrama2023algebraictheories}. 
        At long last, via the chromatic Nullstellensatz \cite[Corollary 8.13]{burklund2022chromaticnullstellensatz}, one may obtain an $\E_{\infty}$-orientation $\MUP \to E(k,\Gamma_h)$ for every height $h$ when \(k\) is algebraically closed. 
        
        By \cref{theorem: MUR MUPR lifting result} we may lift these to $\E_{2\rho}$-orientations $\MUR \to E(k, \Gamma_h)$ resp.~to $\E_{3\rho}$-orientations $\MUPR \to E(k, \Gamma_h)$.

        \item Let $n \geq 0$. Meier refined the Hirzebruch level-$n$-genera to $C_2$-equivariant maps $\MUR \to \tmf_1(n)$ and particularly proved that $\tmf_1(n)$ is strongly even \cite[Theorem 2.22]{MeierHirzebruch}. Senger lifted the Hirzebruch level-$n$ genus to an $\E_{\infty}$-map $\MU \to \tmf_1(n)$, so we can lift this to an $\E_{2\rho}$-map $\MUR \to \tmf_1(n)$ by \cref{theorem: MUR MUPR lifting result}.

        This is a first approximation to Senger's question \cite[Question 1.10]{sengerleveln}, about whether there is a $C_2$-equivariant analog for his $\E_{\infty}$-orientations. We will return to fully answering Senger's question in the future.
        \item Burklund--Hahn--Levy--Schlank used the Adams conjecture to consider certain $\E_{\infty}$-algebra maps $\Psi^{\ell} \colon \MU_{(2)} \to \MU_{(2)}$, which they call \emph{Adams operations} on $\MU_{(2)}$, see \cite[Construction 5.3]{BHLS}. So \cref{theorem: MUR MUPR lifting result} lifts these to $\E_{2\rho}$-maps $\MU_{\R(2)} \to \MU_{\R(2)}$.\footnote{Note here that our results also work for $(2)$-localized versions.}
    \end{enumerate}
\end{example}

\begin{theorem} \label{theorem: BPR E2rho}
    The Real Brown--Peterson spectrum $\BPR$ admits an $\E_{2\rho}$-algebra structure.
\end{theorem}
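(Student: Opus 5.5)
We would obtain the $\E_{2\rho}$-structure on $\BPR$ as the image of an $\E_{2\rho}$-idempotent on $\MU_{\R(2)}$, lifting the Basterra--Mandell $\E_4$-structure. Basterra--Mandell \cite{basterraMandell2013BP} give an $\E_4$-ring structure on $\BP$ together with $\E_4$-ring maps $\MU_{(2)} \to \BP \to \MU_{(2)}$ whose composite $e\colon \MU_{(2)} \to \MU_{(2)}$ is a Quillen-type idempotent up to homotopy, with $\BP$ its image. We would then lift this to $C_2$.

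\textbf{Step 1 (lifting the idempotent).} $\MU_{\R(2)}$ is a strongly even $\E_\infty^{C_2}$-ring spectrum; this is standard via Hill--Meier and is compatible with $2$-localization. Applying the $2$-local version of \cref{theorem: MUR MUPR lifting result}(i) with $n=2$ (see the footnote to \cref{example: E2rho maps}) to $e$ gives a unique $\E_{2\rho}$-map $\ol{e}\colon \MU_{\R(2)} \to \MU_{\R(2)}$ with $\Res_e^{C_2}\ol{e}\simeq e$. Uniqueness applied to $e\circ e \simeq e$ shows $\ol e\circ\ol e \simeq \ol e$ as $\E_{2\rho}$-maps, since both are lifts of the same $\E_4$-map.

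\textbf{Step 2 (splitting).} We would form the image of $\ol e$ as the sequential colimit $\colim(\MU_{\R(2)} \xrightarrow{\ol e} \MU_{\R(2)} \xrightarrow{\ol e} \cdots)$, computed in $\E_{2\rho}$-algebras. This colimit is sifted, so it agrees with the colimit of underlying $C_2$-spectra, and it is therefore an $\E_{2\rho}$-ring $B$. The main obstacle is identifying $B$ with $\BPR$; we would check this in two ways.
\begin{itemize}
\item[(a)] Underlying, $\Res_e^{C_2}B$ is the telescope of $e$, which is $\BP$.
\item[(b)] To get the equivariant identification without computing geometric fixed points directly, we would use that $\BPR$ is strongly even with underlying $\BP$, and that equivalences between strongly even $C_2$-spectra are detected on underlying spectra. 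So it suffices to show that $B$ is strongly even and to produce a $C_2$-map $\BPR \to B$ lifting $\BP \simeq \Res B$.
\end{itemize}
Strong evenness of $B$ follows because $\ul\pi_{*\rho-1}$ and the restriction isomorphisms on $\ul\pi_{*\rho}$ pass to filtered colimits and retracts. The comparison map would come from the unique-lifting theorem (Theorem D) applied to $X=\BPR$, which is slice bounded below with $\uHZ\otimes\BPR$ a sum of $S^{n\rho}$'s, mapping into the strongly even $B$.

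Alternatively, one can bypass the comparison by noting that $\BPR$ is defined, up to equivalence, as exactly this image of a Real Quillen idempotent, and that $\ol e$ is such an idempotent after forgetting structure. Once the equivalence $B\simeq\BPR$ is established, transporting the structure along it gives $\BPR$ an $\E_{2\rho}$-algebra structure.
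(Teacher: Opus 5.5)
Your proposal is correct, and the construction is the paper's: lift the Basterra--Mandell $\E_4$-idempotent on $\MU_{(2)}$ to an $\E_{2\rho}$-map via \cref{theorem: MUR MUPR lifting result}, take the sequential colimit in $\E_{2\rho}$-algebras, and observe that the result is strongly even with underlying $\BP$. You differ from the paper only in how you identify this colimit $B$ with $\BPR$.

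\textbf{The paper's identification.} Since $\BP$ is Landweber exact, the paper concludes that $B$ is Real Landweber exact by Hill--Meier. It then invokes Hill--Meier's uniqueness of a strongly even, Real Landweber exact $C_2$-spectrum with underlying $\BP$. This needs no information about $\BPR$ beyond the fact that it is such a spectrum.

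\textbf{Your identification.} You build an explicit comparison map $\BPR \to B$ by applying Theorem D with $X=\BPR$. You then use that equivalences between strongly even $C_2$-spectra are detected on underlying spectra. This stays inside the paper's own lifting machinery, and it even shows the comparison map is unique. It costs more input, though:
\begin{itemize}
\item you need that $\BPR$ is slice bounded below;
\item you need that $\mathrm{H}\underline{\mathbb{Z}}_{(2)}\otimes\BPR$ is a finite-type sum of $\rho$-spheres, the Real analogue of $H_*\BP$, which follows from Hu--Kriz/Hill--Hopkins--Ravenel together with Araki's splitting;
\item since Theorem D is stated with integral $\uHZ$, you must use its $2$-local version, just as for lifting the idempotent.
\end{itemize}

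\textbf{Minor points.} Your remark that $\ol e\circ\ol e\simeq\ol e$ follows from uniqueness of lifts is correct, but the argument does not need it. The ``bypass'' in your last paragraph is not free: matching $\ol e$ with Araki's Real idempotent, or their images, would itself need a uniqueness statement of the kind above. Route (b) is the argument to keep.
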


\begin{proof}
    Basterra--Mandell constructed an $\E_4$-idempotent $\MU \to \MU$ which splits off a copy of $\BP$, see \cite[Theorem 1.1]{basterraMandell2013BP}. By \cref{theorem: MUR MUPR lifting result} we may lift this to an $\E_{2\rho}$-algebra map $e \colon \MUR \to \MUR$. We form the filtered colimit
    \[ \BP_{\R}^{\BS} \coloneqq \colim \left( \MUR \xrightarrow{e} \MUR \xrightarrow{e} \cdots \right) \in \Alg_{\E_{2{\rho}}}(\ul{\Sp}^{C_2}) \]
    in $\E_{2\rho}$-algebras. There are now several ways to see that its underlying $C_2$-spectrum is $\BPR$.

    As a filtered colimit of strongly even $C_2$-spectra, $\BP_{\R}^{\BS}$ is strongly even again. Moreover, its underlying spectrum is $\BP$, which is Landweber exact. Thus, it is Real Landweber exact by \cite[Theorem 3.6(b)]{hillmeier2017}. There is only one strongly even $C_2$-spectrum which is Real Landweber exact and underlying $\BP$ by \cite[Proposition 3.13]{hillmeier2017} and $\BPR$ satisfies these conditions. Thus, $\BP_{\R}^{\BS}$ is an $\E_{2\rho}$-refinement of $\BPR$.
\end{proof}

By formal means \cite[Construction 2.2.5, Corollary 6.1.6]{quinnZhu2026multiplicativeequivariantthomspectra} we obtain higher group versions of the previous results.

\begin{remark} \label{remark: chqs}
    In \cite{quinnZhu2026multiplicativeequivariantthomspectra} we showed that $\BPR$ admits an $\E_{\rho}$-algebra structure. Moreover, the techniques there showed that it suffices to compute the $\ul{\Z}$-valued Bredon cohomology of $\BUR\langle 3\rho \rangle \coloneqq \Omega^{\infty}\Sigma^{3\rho}\ku_{\R}$, see \cite[Theorem 5.4.3]{quinnZhu2026multiplicativeequivariantthomspectra}. This computation was carried out by Carrick--Hill--Stewart and the authors and will appear in forthcoming work. So \cref{theorem: BPR E2rho} is an independent second proof of this result.
\end{remark}

\begin{corollary} \label{corollary: higher group structure examples}
    Let $G \geq C_2$ be a finite group.
    \begin{enumerate}[(i)]
        \item Let $G \leq \mathbb{G}_h$ be a finite subgroup of the extended Morava stabilizer group. There exist $\Coind_{C_2}^G \E_{3\rho}$-algebra maps $\MUP^{( \! ( G ) \! )} \to E(k, \Gamma_h)$, where $\Gamma_h$ is a formal group law of height $h$ over an algebraically closed field $k$.
        \item Let $n \geq 0$ and $G = (\Z/n)^{\times}$. Then, the Hirzebruch level-$n$-genera induce $\Coind_{C_2}^G \E_{2\rho}$-algebra maps $\MU^{( \! ( G ) \! )} \to \tmf_1(n)$.
        \item The $G$-spectrum $\BP^{( \! ( G ) \! )}$ admits a $\Coind_{C_2}^G \E_{2\rho}$-algebra structure.
    \end{enumerate}
\end{corollary}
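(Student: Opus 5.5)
The plan is to deduce all three statements from their $C_2$-equivariant counterparts by applying the norm $N_{C_2}^G$. The formal input is the adjunction $\Res_{C_2}^G \dashv \Coind_{C_2}^G$ on equivariant $\infty$-operads, together with \cite[Construction 2.2.5, Corollary 6.1.6]{quinnZhu2026multiplicativeequivariantthomspectra}. These say that for a $C_2$-$\infty$-operad $\mathcal O$, the norm $N_{C_2}^G$ sends $\mathcal O$-algebras in $\Sp^{C_2}$ to $\Coind_{C_2}^G\mathcal O$-algebras in $\Sp^G$. Moreover, for an $\E_\infty^G$-ring $E$, an $\mathcal O$-algebra map $A \to \Res_{C_2}^G E$ corresponds to a $\Coind_{C_2}^G\mathcal O$-algebra map $N_{C_2}^G A \to E$. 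The latter is obtained by composing the normed map with the counit $N_{C_2}^G\Res_{C_2}^G E \to E$, and this counit is a map of $\Coind$-algebras because $E$ has all norms. The target $G$-spectra are genuinely $\E_\infty^G$-rings with $G$-action: $E(k,\Gamma_h)$ via Goerss--Hopkins--Miller (as in \cite{hahnRealOrientationsLubin2020}), and $\tmf_1(n)$ with its $(\Z/n)^\times$-action.

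For (i), I restrict $E(k,\Gamma_h)$ to $C_2 \leq G$, where $C_2$ is the central $\{\pm1\}$ acting by the formal inverse. Hahn--Shi show it is strongly even, and it is strongly even periodic since its underlying theory is even periodic and the Real periodicity class $\bar u$ is a unit \cite{hahnRealOrientationsLubin2020}. Next I take the $\E_\infty$-map $\MUP \to E(k,\Gamma_h)$ from the chromatic Nullstellensatz \cite[Corollary 8.13]{burklund2022chromaticnullstellensatz}, which requires $k$ algebraically closed. I restrict it to an $\E_6$-map and lift it by \cref{theorem: MUR MUPR lifting result}(ii) to an $\E_{3\rho}$-map $\MUPR \to \Res^G_{C_2}E(k,\Gamma_h)$. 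Applying the norm adjunction then gives the $\Coind_{C_2}^G\E_{3\rho}$-map $\MUP^{(\!(G)\!)} \to E(k,\Gamma_h)$.

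For (ii), I use that $C_2 = \{\pm1\} \leq (\Z/n)^\times$ and that $\tmf_1(n)$ is strongly even by Meier \cite[Theorem 2.22]{MeierHirzebruch}. Senger's $\E_\infty$-Hirzebruch genus restricts to an $\E_4$-map, which \cref{theorem: MUR MUPR lifting result}(i) lifts to an $\E_{2\rho}$-map $\MUR \to \tmf_1(n)$, as in \cref{example: E2rho maps}. The norm adjunction then gives the claimed map from $\MU^{(\!(G)\!)}$.

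For (iii), \cref{theorem: BPR E2rho} makes $\BPR$ an $\E_{2\rho}$-algebra, and $N_{C_2}^G$ of it is a $\Coind_{C_2}^G\E_{2\rho}$-algebra with underlying $G$-spectrum $\BP^{(\!(G)\!)}$.

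The main obstacle is checking that the $C_2$-actions obtained by restriction match those used in the lifting theorem. Concretely, the restricted $\E_\infty^G$-structure on $E(k,\Gamma_h)$ must be an $\E_\infty^{C_2}$-ring whose $C_2$-action is the one Hahn--Shi prove is strongly even periodic, and similarly for $\tmf_1(n)$. I would handle this by citing the uniqueness of the Goerss--Hopkins--Miller action and Meier's construction, respectively.
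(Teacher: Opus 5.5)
Your proposal is correct and is essentially the paper's argument: the paper deduces all three parts ``by formal means'' from \cite[Construction 2.2.5, Corollary 6.1.6]{quinnZhu2026multiplicativeequivariantthomspectra}, i.e.\ by applying $N_{C_2}^G$ to the $C_2$-equivariant $\E_{3\rho}$- and $\E_{2\rho}$-maps of \cref{example: E2rho maps} and to the $\E_{2\rho}$-structure of \cref{theorem: BPR E2rho}, exactly as you do. One small correction: drop the word ``corresponds'', since it asserts a bijection you neither justify nor need; the argument only uses the direction sending an $\mathcal O$-map $A \to \Res_{C_2}^G E$ to the composite $N_{C_2}^G A \to N_{C_2}^G \Res_{C_2}^G E \to E$, which is what you actually construct.
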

Such orientations of Lubin--Tate theory are of great interest to be able to understand the action of the Morava stabilizer group. A further task is to orient it from smaller objects such as $\BPGm$. We carry this out in forthcoming work of ours \cite{quinnZhu20206structuredquotients} building on work of Beaudry--Hill--Shi--Zeng \cite{beaudryHillShiZeng2021modelsLubinTate}.

\section{Real Snaith equivalences}
\label{section: Real Snaith Equivalences}

\subsection{Real Snaith theorems}

Snaith proved that the important chromatic spectra $\KU$ and $\MUP$ can be recovered as the localizations 
\[ \KU \simeq \Sigma_+^{\infty} \BU(1)[\beta^{-1}] \quad \text{and} \quad \MUP \simeq \Sigma_+^{\infty} \BU [\beta^{-1}] \]
for a certain Bott class $\beta$. While the first equivalence can be refined to one of $\E_{\infty}$-ring spectra, the second one turned out not to enjoy a similar property. Hahn--Yuan proved that it does not refine to an equivalence of $\E_5$-ring spectra, and they at least provide an $\E_2$-equivalence \cite{hahnYuan2020exotic}. Using Wilson space machinery we already produced a structured map  $\MUP \to \Sigma_+^{\infty} \BU[\beta^{-1}]$, more precisely an $\E_6$-map, see \cref{remark: orientation of MUP Snaith}. 

Motivated by this observation and our lifting results \cite{quinnZhu2026multiplicativeequivariantthomspectra}, we will lift Snaith's equivalences to $C_2$-equivariant homotopy theory. 

\begin{construction}
	\hfill \label{construction: bott elements} 
	\begin{enumerate}[(i)]
		\item The inclusion in the second summand
		\[ \phi \colon \Sigma^{\infty} \CP^{\infty}_{\R} \longrightarrow \S \oplus \Sigma^{\infty} \CP^{\infty}_{\R} \simeq \Sigma_+^{\infty} \CP^{\infty}_{\R}. \]
		restricts along the bottom cell inclusion to a map $\Sigma^{\infty} S^{\rho} \simeq \Sigma^{\infty} \CPR^1 \to \Sigma_+^{\infty} \CPR^{\infty}$. This yields the \tb{Real Bott element} $\tb{\beta_{\R}} \in \pi_{\rho}(\Sigma_+^{\infty} \CPR^{\infty})$.
		\item Postcomposing by the preferred map $\Sigma_+^{\infty} \BU_{\R}(1) \to \Sigma_+^{\infty} \BU_{\R}$ yields $\tb{\beta_{\R}} \in \pi_{\rho}(\Sigma_+^{\infty} \BU_\R)$.
	\end{enumerate}
\end{construction}

\begin{remark}
    Let $X$ be a pointed $G$-space. We should note that the map $\Sigma^{\infty}X \to \Sigma_+^{\infty}X$ is not induced by the inclusion map $X \to X_+$ since this is not a map of pointed $G$-spaces. Instead, consider the split fiber sequence
    \begin{center}
        \begin{tikzcd}
            \S \arrow[r, "i"] & \Sigma^{\infty}_+ X \arrow[r, "p"] & \Sigma^{\infty} X.
        \end{tikzcd}
    \end{center}
    Let $r \colon \Sigma_+^{\infty}X \to \S$ denote the right-inverse of $i$. Then, there is a factorization
    \begin{center}
        \begin{tikzcd}
            \S \arrow[r, "i"] & \Sigma_+^{\infty} X \arrow[dr, "\id_{\Sigma_+^{\infty}X}-i \circ r", swap] \arrow[r] & \Sigma^{\infty} X \arrow[d, dashed, "\exists !"]
            \\ & & \Sigma_+^{\infty} X
        \end{tikzcd}
    \end{center}
    by the universal property of cofibers. The map $\phi$ from \cref{construction: bott elements} is an example of this.
\end{remark}

\noindent By construction, the Real Bott elements $\beta_{\R}$ restrict to the classical non-equivariant Bott elements $\beta \in \pi_2(\Sigma_+^{\infty}\CP^{\infty})$ and $\beta \in \pi_2(\Sigma_+^{\infty} \BU)$. Inverting those classes yields Real candidates for Real Snaith equivalences: $\Sigma_+^{\infty} \CP_{\R}^{\infty}[\beta_{\R}^{-1}]$ and $\Sigma_+^{\infty} \BUR[\beta_{\R}^{-1}]$. These are $\E_{\infty}$-algebras in $C_2$-spectra, and we will later be able to show that they admit $\E_{\infty}^{C_2}$-algebra structures (\cref{prop: localizations are commutative}).

\begin{lemma} \label{lemma: Real orientable}
	The $C_2$-spectra $\Sigma_+^{\infty} \CPR^{\infty}[\beta^{-1}_{\R}]$ and $\Sigma_+^{\infty} \BU_{\R}[\beta^{-1}_{\R}]$ are Real orientable.
\end{lemma}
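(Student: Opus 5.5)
A Real orientation of a homotopy commutative $C_2$-ring spectrum $E$ is a class $\bar x \in \widetilde{E}^{\rho}(\CPR^{\infty})$ whose restriction along the bottom cell $S^{\rho} \simeq \CPR^1 \to \CPR^{\infty}$ is the unit $1 \in \widetilde{E}^{\rho}(S^{\rho}) \cong \pi_0^{C_2}E$. I will write down such a class explicitly for $E = \Sigma_+^{\infty}\CPR^{\infty}[\beta_{\R}^{-1}]$. Then I will transport it to $\Sigma_+^{\infty}\BUR[\beta_{\R}^{-1}]$ along the ring map induced by $\CPR^{\infty} = \BU_{\R}(1) \to \BUR$. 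This map sends the Real Bott element to the Real Bott element by \cref{construction: bott elements}(ii), so it induces a ring map on localizations, and Real orientations push forward along ring maps.

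For $\Sigma_+^{\infty}\CPR^{\infty}[\beta_{\R}^{-1}]$ I mimic Snaith's classical orientation of $\Sigma_+^{\infty}\CP^{\infty}[\beta^{-1}]$. Let $j \colon \Sigma_+^{\infty}\CPR^{\infty} \to \Sigma_+^{\infty}\CPR^{\infty}[\beta_{\R}^{-1}]$ be the localization map. The candidate class is the composite
\[ \Sigma^{\infty}\CPR^{\infty} \xrightarrow{\ \phi\ } \Sigma_+^{\infty}\CPR^{\infty} \xrightarrow{\ j\ } \Sigma_+^{\infty}\CPR^{\infty}[\beta_{\R}^{-1}] \xrightarrow{\ \beta_{\R}^{-1}\cdot\ } \Sigma^{-\rho}\Sigma_+^{\infty}\CPR^{\infty}[\beta_{\R}^{-1}], \]
where $\phi$ is the map from \cref{construction: bott elements}(i). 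This is an element of $\widetilde{E}^{\rho}(\CPR^{\infty})$.

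To check the normalization, I restrict along $\Sigma^{\infty}S^{\rho} \simeq \Sigma^{\infty}\CPR^1 \to \Sigma^{\infty}\CPR^{\infty}$. By definition of $\beta_{\R}$, the restriction of $j\circ\phi$ is the class $j(\beta_{\R}) \in \pi^{C_2}_{\rho}E$. Multiplying by $\beta_{\R}^{-1}$ gives $1 \in \pi_0^{C_2}E$, as required.

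The main point needing care is that "multiplication by $\beta_{\R}^{-1}$" is a well-defined $E$-module equivalence $E \simeq \Sigma^{-\rho}E$. This holds because $E$ is the localization of an $\E_\infty$-ring at $\beta_{\R}$, so $\beta_{\R}$ acts invertibly, and the identification $\widetilde{E}^{\rho}(S^{\rho}) \cong \pi_0^{C_2}E$ is compatible with this action. The other point to check is that the Real orientation is being tested on genuine $C_2$-fixed homotopy, that is, on $\pi^{C_2}_{\rho}$. This is built in, since $\beta_{\R}$ was defined as a genuine equivariant map $S^{\rho} \to \Sigma_+^{\infty}\CPR^{\infty}$.
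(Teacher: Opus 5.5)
Your construction for $\Sigma_+^{\infty}\CPR^{\infty}[\beta_{\R}^{-1}]$ matches the paper's. The transport step to $\Sigma_+^{\infty}\BUR[\beta_{\R}^{-1}]$, however, fails: $\Sigma_+^{\infty}\BU_{\R}(1)\to\Sigma_+^{\infty}\BUR$ is not a ring map, so it induces nothing on localizations.

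The ring structure on $\Sigma_+^{\infty}\CPR^{\infty}$ comes from tensor product of line bundles. The ring structure on $\Sigma_+^{\infty}\BUR$, with $\BUR\simeq\Omega^{\infty}\Sigma^{\rho}\kuR$, comes from Whitney sum. The map $\BU(1)\to\BU$ does not intertwine them. Already on underlying Pontryagin rings, $\beta_1^2=2\beta_2$ in $H_*(\CP^{\infty};\Z)$, whereas $b_1^2\neq 2b_2$ in $H_*(\BU;\Z)=\Z[b_1,b_2,\dots]$. \cref{construction: bott elements}(ii) only uses this map as a map of spectra.

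Nor can you substitute another map. By Snaith's classical theorems, a unital map between the two localizations would have as underlying a unital map $\KU\to\MUP$. Projecting onto the degree-zero summand $\MU$ of $\MUP\simeq\bigoplus_k\Sigma^{2k}\MU$ and then to $H\Z$ would give a map $\KU\to H\Z$ sending $1$ to $1$. But $[\KU,H\Z]=0$, since $H\Z\otimes\KU$ is rational. So there is nothing to push your orientation forward along.

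The repair is to never pass through $\Sigma_+^{\infty}\CPR^{\infty}[\beta_{\R}^{-1}]$. This is what the paper does: it writes the argument for $\BUR$ and notes that the $\CPR^{\infty}$ case is identical. With $E=\Sigma_+^{\infty}\BUR[\beta_{\R}^{-1}]$, take
\[ \Sigma^{\infty}\CPR^{\infty}\xrightarrow{\ \phi\ }\Sigma_+^{\infty}\CPR^{\infty}\longrightarrow\Sigma_+^{\infty}\BUR\longrightarrow E\xrightarrow{\ \beta_{\R}^{-1}\ }\Sigma^{\rho}E. \]
This uses only a map of spectra out of $\Sigma_+^{\infty}\CPR^{\infty}$ together with the ring structure of $E$. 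By \cref{construction: bott elements}(ii), its restriction to $\CPR^1$ is $\beta_{\R}^{-1}\beta_{\R}=1$.

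Also fix the degree bookkeeping. Multiplication by $\beta_{\R}^{-1}\in\pi^{C_2}_{-\rho}E$ is a map $E\to\Sigma^{\rho}E$. So the target of your composite should be $\Sigma^{\rho}$ of the localization, not $\Sigma^{-\rho}$, for the class to lie in $\widetilde{E}^{\rho}(\CPR^{\infty})$.
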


\begin{proof}
	Let us only discuss $\Sigma_+^{\infty} \BU_{\R}[\beta_{\R}^{-1}]$, the same argument works for $\Sigma_+^{\infty} \CP_{\R}^{\infty}[\beta^{-1}_{\R}]$. For this, we observe that the composite
	\begin{center}
		\begin{tikzcd}
			\Sigma^{\infty} \CPR^{\infty} \arrow[r, "\phi"] & \Sigma_+^{\infty} \CPR^{\infty} \arrow[r] & \Sigma_+^{\infty} \BUR[\beta_{\R}^{-1}] \arrow[r, "\beta^{-1}_{\R}"] & \Sigma^{\rho} \Sigma_+^{\infty} \BUR[\beta_{\R}^{-1}],
		\end{tikzcd}
	\end{center}
	is a Real orientation of $\Sigma_+^{\infty} \BUR[\beta^{-1}_{\R}]$ by construction. Indeed, precomposing by $\Sigma^{\infty}\CP_{\R}^1$ yields the following diagram:
    \begin{center}
		\begin{tikzcd}
			\Sigma^{\infty} \CP_{\R}^1 \arrow[r] \arrow[rrr, bend right, "\beta_{\R}", swap]& \Sigma^{\infty} \CPR^{\infty} \arrow[r, "\phi"] & \Sigma_+^{\infty} \CPR^{\infty} \arrow[r] & \Sigma_+^{\infty} \BUR[\beta_{\R}^{-1}] \arrow[r, "\beta^{-1}_{\R}"] & \Sigma^{\rho} \Sigma_+^{\infty} \BUR[\beta_{\R}^{-1}],
		\end{tikzcd}
	\end{center}
    By construction, the first three maps compose to $\beta_{\R}$, so the composition is $\beta_{\R}^{-1} \beta_{\R} = 1$.
\end{proof}

This sets us up to prove the following stronger result. The proof strategy is inspired by \cite[Theorem 3.3]{chathamHahnYuan2024wilson}.

\begin{proposition} \label{prop: strongly even}
	The $C_2$-spectra $\Sigma_+^{\infty} \CPR^{\infty}[\beta^{-1}_{\R}]$ and $\Sigma_+^{\infty} \BU_{\R}[\beta^{-1}_{\R}]$ are strongly even periodic.
\end{proposition}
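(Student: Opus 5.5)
Write $R$ for either $\Sigma_+^{\infty}\CPR^{\infty}[\beta_{\R}^{-1}]$ or $\Sigma_+^{\infty}\BUR[\beta_{\R}^{-1}]$. Periodicity is immediate: $\beta_{\R}\in\pi_\rho^{C_2}$ is invertible in $R$, so multiplication by $\beta_{\R}$ gives an equivalence $\Sigma^{\rho}R\simeq R$ of $R$-modules. The content is strong evenness. Following \cite[Theorem 3.3]{chathamHahnYuan2024wilson}, the plan is to realise $R$ as a filtered colimit of $\rho$-shifts of a strongly even $C_2$-spectrum. Strongly even $C_2$-spectra are closed under filtered colimits and under $\Sigma^{k\rho}$, since the Hill--Meier conditions only concern $\ul\pi_{k\rho-1}$ and $\ul\pi_{k\rho}$ and these commute with filtered colimits. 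Explicitly,
\[ R\simeq \colim\bigl(\Sigma_+^{\infty}X\xrightarrow{\beta_{\R}}\Sigma^{-\rho}\Sigma_+^{\infty}X\xrightarrow{\beta_{\R}}\Sigma^{-2\rho}\Sigma_+^{\infty}X\to\cdots\bigr). \]
Here $\Sigma_+^\infty X$ itself is not strongly even, since stable homotopy of spaces is not even. So instead I would use \cref{lemma: Real orientable}.

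The Real orientation of $R$ gives a ring map $\MUR\to R$. The key step is to show that $R$ is a retract of, or equivalent to, $\MUR\otimes\Sigma_+^{\infty}X[\beta_{\R}^{-1}]$ as an $\MUR$-module, in a way compatible with the colimit. Concretely, I would use the unit map $R\to \MUR\otimes R$ together with the multiplication $\MUR\otimes R\to R$ coming from the orientation. This exhibits $R$ as a retract of
\[ \MUR\otimes R\simeq \colim_k \Sigma^{-k\rho}\MUR\otimes\Sigma_+^{\infty}X. \]
Strong evenness is closed under retracts, so it then suffices to show that $\MUR\otimes\Sigma_+^{\infty}X$ is strongly even for $X=\CPR^{\infty}$ and $X=\BUR$.

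For that I would use the Real Thom isomorphism and the known structure of the Real orientation theory of these spaces. Since $\MUR$ is Real oriented, $\MUR\otimes\Sigma_+^{\infty}\CPR^{\infty}$ splits as $\bigoplus_{n\ge0}\Sigma^{n\rho}\MUR$, using the cell structure of $\CPR^\infty$ with cells in degrees $n\rho$ and the collapse of the Real Atiyah--Hirzebruch spectral sequence. Similarly, $\MUR\otimes\Sigma_+^{\infty}\BUR$ is a sum of $\Sigma^{n\rho}\MUR$ via the Real analogue of $\MU_*\BU\cong\MU_*[b_1,b_2,\dots]$, where Schubert cells have dimensions $n\rho$. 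Each summand $\Sigma^{n\rho}\MUR$ is strongly even by Hill--Meier, since $\MUR$ is strongly even. Direct sums preserve strong evenness, and so does the shift $\Sigma^{-k\rho}$.

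The main obstacle I expect is the first step: making the retraction honest. We need a map $R\to\MUR\otimes R$ splitting the multiplication, and we need to check that the identification $\MUR\otimes R\simeq(\MUR\otimes\Sigma_+^\infty X)[\beta_{\R}^{-1}]$ really yields a filtered colimit of strongly even objects. The latter holds because $\beta_{\R}$ acts by a map $\Sigma^{\rho}M\to M$ of $\MUR$-modules, and any filtered colimit of strongly even spectra along arbitrary maps is strongly even. Should the unit retraction look suspect, my fallback is to avoid it entirely. Because $R$ is Real oriented, it is a module over $\MUR$, and I would argue directly that $R\otimes_{\MUR}(\MUR\otimes R)\to R$ exhibits the required retraction, or compute $\ul\pi_{k\rho-1}R$ via the collapsing Real Atiyah--Hirzebruch spectral sequence for $R$-homology of $X$.
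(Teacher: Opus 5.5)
Your argument is correct and is essentially the paper's proof: you exhibit $R$ as a retract of $\MUR\otimes R$ via unit and multiplication, you get strong evenness of $\MUR\otimes\Sigma_+^{\infty}X$ from the Real-orientation freeness (which the paper simply cites from Hill), and you use that inverting the degree-$\rho$ class $\beta_{\R}$ preserves strong evenness (the paper phrases this via Greenlees' gap characterization, you via closure under $\rho$-shifts and filtered colimits). Your worry about the retraction is unfounded: $R\to\MUR\otimes R\to R$ is the identity by unitality of the orientation map, and no compatibility with the colimit is needed since $\MUR\otimes R$ is itself strongly even.
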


\begin{proof}
	Let us demonstrate a proof for $\Sigma_+^{\infty} \BUR[\beta_{\R}^{-1}]$, it goes through analogously for $\Sigma_+^{\infty} \CP^{\infty}_{\R}[\beta_{\R}^{-1}]$. It suffices to prove that these are strongly even, the periodicity comes from the fact that $\beta_{\R}$ is an invertible class in degree $\rho$. 
    
    Real orientability (\cref{lemma: Real orientable}) yields a map $f \colon \MU_{\R} \to \Sigma_+^{\infty} \BUR[\beta_{\R}^{-1}]$ of homotopy commutative $C_2$-ring spectra \cite{HuKrizReal}. Then, the composite
	\begin{center}
		\begin{tikzcd}
			\S \otimes \Sigma_+^{\infty} \BUR{[\beta_{\R}^{-1}]} \arrow[r, "1 \otimes \id"] & \MU_{\R} \otimes \Sigma_+^{\infty} \BUR{[\beta_{\R}^{-1}]} \arrow[r, "f \otimes \id"] & {\Sigma_+^{\infty} \BUR[\beta_{\R}^{-1}] \otimes \Sigma_+^{\infty} \BUR[\beta_{\R}^{-1}]} \arrow[d, "\mathrm{mult}"] 
			\\ & & \Sigma_+^{\infty} \BUR{[\beta_{\R}^{-1}]}
		\end{tikzcd}
	\end{center}
	exhibits $\Sigma_+^{\infty} \BUR{[\beta_{\R}^{-1}]}$ as a retract of $\MU_{\R} \otimes \Sigma_+^{\infty} \BUR{[\beta_{\R}^{-1}]}$. On the other hand, $\MU_{\R} \otimes \Sigma_+^{\infty} \BUR$ is strongly even by Real orientation theory \cite[Theorem 4.28]{hill2022freeness}. 
    
    Greenlees' gap characterization \cite[Lemma 1.2]{greenleesfour} describes strongly even $C_2$-spectra as those $C_2$-spectra $X$ with $\pi_{*\rho-i}^{C_2}X = 0$ for $i = 1,2,3$. Since the Real Bott element is in degree $\rho$, the gap condition remains after localizing at the Real Bott element.
\end{proof}

In the presence of equivariant multiplicative structures, norms will show up. On the level of homotopy groups two different (but related) constructions can be meant by this, so we explain our notation to be able to take particular care about this.

\begin{notation} \label{notation: different norms}
    Let $R$ be an $\E_{\infty}^{C_2}$-ring and $n \in \Z$. Suppose that $y \in \pi_n^{e}(R)$.
    \begin{enumerate}[(i)]
        \item We write $\tb{N_e^{C_2}y} \in \pi_{n\rho}^{C_2}(N_e^{C_2}R)$ for the element that arises after applying the functor $N_e^{C_2}$ to the map $y \colon \S^n \to R$.
        \item We write $\tb{\Nm_e^{C_2}y} \in \pi_{n\rho}^{C_2}(R)$ for the element that arises after applying the norm multiplication $N_e^{C_2} \Res_e^{C_2} R \to R$ to $N_e^{C_2}y \in \pi_{n\rho}^{C_2}(N_e^{C_2}R)$.
    \end{enumerate}
\end{notation}

\begin{lemma} \label{lemma: res n res}
    Let $R$ be an $\E_{\infty}^{C_2}$-ring spectrum, $V$ be a $C_2$-representation and $x \in \pi_V^{C_2}(R)$. Let $\deg{\tau}$ denote the degree of the $C_2$-action $\tau$ on $S^V$. Then, 
    \[ \Res_e^{C_2} \Nm_e^{C_2} \Res_e^{C_2} x = (\deg{\tau})(\Res_e^{C_2} x)^2 \in \pi_{2|V|}^e(R). \]
\end{lemma}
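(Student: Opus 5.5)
We reduce the claim to the well-known formula for the restriction of a norm: for any $\E_{\infty}^{C_2}$-ring $R$ and $y \in \pi_n^e(R)$ one has $\Res_e^{C_2}\Nm_e^{C_2} y = y \cdot \tau^*(y)$, where $\tau^*$ denotes the action of the generator of $C_2$ on $\pi_*^e(R)$. We then compute $\tau^*(\Res_e^{C_2}x)$ when $y=\Res_e^{C_2}x$ comes from an equivariant class.

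\textbf{Step 1: the double coset formula.} The underlying spectrum of $N_e^{C_2} R^e$ is $R^e \otimes R^e$, with $C_2$ acting by swapping the factors twisted by $\tau$. Concretely, $\Res_e^{C_2} N_e^{C_2} y$ is the map $\S^n \otimes \S^n \to R^e\otimes R^e$ given by $y\otimes y$. The norm multiplication $N_e^{C_2}\Res_e^{C_2} R \to R$ restricts to a map $R^e\otimes R^e \to R^e$. This map is the $\E_\infty$-multiplication precomposed with $\id \otimes \tau$, and it is the counit of the restriction–norm adjunction for commutative algebras. Combining these gives $\Res_e^{C_2}\Nm_e^{C_2} y = y\cdot \tau^*y$. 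Here we identify $\S^{n\rho}$ restricted to $e$ with $\S^{2n}=\S^n\otimes\S^n$, and we keep track of the orientation convention this identification fixes.

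\textbf{Step 2: the action on a restricted class.} Let $y = \Res_e^{C_2} x$, where $x\colon S^V \to R$ is $C_2$-equivariant. Equivariance gives $\tau\circ y = y \circ \tau_{S^V}$, where $\tau_{S^V}$ is the action on the source sphere. Non-equivariantly, $\tau_{S^V}$ is a self-map of $S^{|V|}$ of degree $\deg\tau = \pm 1$. Hence the class $\tau^* y\in\pi_{|V|}^e(R)$ equals $(\deg\tau)\, y$.

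\textbf{Step 3: combine.} Steps 1 and 2 give $\Res_e^{C_2}\Nm_e^{C_2}\Res_e^{C_2}x = y\cdot (\deg\tau)y = (\deg\tau)y^2$, which is the claimed formula.

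\textbf{Main obstacle.} The delicate part is Step 1. We must make precise that the restriction of the norm counit is multiplication twisted by $\tau$ on the second factor. We must also verify that the chosen identification $\Res_e^{C_2}\S^{n\rho}\simeq \S^{2n}$ introduces no extra sign; a swap of factors could a priori contribute $(-1)^{n^2}$. We plan to handle this by using the adjunction description of $N_e^{C_2}$ for commutative algebras and by fixing the standard orientation of $\rho$, under which the element $N_e^{C_2}y$ restricts to exactly $y\otimes y$.
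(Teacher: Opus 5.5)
Your proposal is correct and follows essentially the same route as the paper. You apply the norm--restriction double coset formula $\Res_e^{C_2}\Nm_e^{C_2} y = y\cdot\tau_* y$, then use equivariance of $x$ to get $\tau\circ\Res_e^{C_2}x = \Res_e^{C_2}x\circ\tau_{S^V}$, hence $\tau_*(\Res_e^{C_2}x) = (\deg\tau)\Res_e^{C_2}x$. The only difference is that the paper simply cites the double coset formula, so your unpacking of the restricted norm counit and your attention to the orientation convention for $\Res_e^{C_2}S^{|V|\rho}\simeq S^{2|V|}$ is extra care the paper leaves implicit.
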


\begin{proof}
    Let $\tau \in C_2$ denote the non-identity element. By the norm-restriction double coset formula\footnote{In general for $H \leq G$ it says $\Res_H^G \Nm_H^G a = \prod_{g \in H \setminus G / H} \Nm_{H^g \cap H}^H g_* \res_{H \cap H^g}^H a$. This formula vastly simplifies for $e \leq C_2$.} we compute
    \[ \Res_e^{C_2} \Nm_e^{C_2}(\Res_e^{C_2}x) = (\Res_e^{C_2}x) \cdot \tau_* (\Res_e^{C_2}x) \in \pi_{2|V|}^e(R). \]
    So we have to show $\tau_*(\Res_e^{C_2}x) = \Res_e^{C_2}x$. Because $\Res_e^{C_2}x \colon \S^{|V|} \to \Res_e^{C_2}R$ is restricted from the $C_2$-equivariant map $x$, there is a commutative diagram
    \begin{center}
        \begin{tikzcd}
            \S^{|V|} \arrow[r, "\Res_e^{C_2}x"]  \arrow[d, "\tau", swap] & R \arrow[d, "\tau"]
            \\ \S^{|V|} \arrow[r, "\Res_e^{C_2}x", swap] & R
        \end{tikzcd}
    \end{center}
    This diagram encompasses $\tau_*(\Res_e^{C_2}x) = \tau \circ \Res_e^{C_2}x = \Res_e^{C_2}x \circ \tau$, which on homotopy groups is $(\deg{\tau}) \Res_e^{C_2} x$.
\end{proof}

\begin{corollary} \label{corollary: Nm Res in strongly even}
    Let $R$ be a strongly even $\E_{\infty}^{C_2}$-ring spectrum and $n \in \Z$. Suppose that $x \in \pi_{n \rho}^{C_2}(R)$. Then, $\Nm_e^{C_2} \Res_e^{C_2} x = (-1)^n x^2 \in \pi_{2n\rho}^{C_2}(R)$.
\end{corollary}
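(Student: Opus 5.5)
The plan is to reduce the claim to a statement on underlying homotopy, using the fact that strongly even $C_2$-spectra are controlled by their underlying spectra in degrees $*\rho$. Concretely, for a strongly even $R$ the restriction map $\Res_e^{C_2}\colon \pi_{m\rho}^{C_2}(R) \to \pi_{2m}^e(R)$ is an isomorphism for every $m \in \Z$; this is one of the defining properties in Hill--Meier \cite[Definition 3.1]{hillmeier2017}. Both sides of the asserted identity live in $\pi_{2n\rho}^{C_2}(R)$, so it suffices to show that they have the same restriction in $\pi_{4n}^e(R)$.

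First I compute the restriction of the left-hand side using \cref{lemma: res n res} with $V = n\rho$:
\[ \Res_e^{C_2}\Nm_e^{C_2}\Res_e^{C_2} x = (\deg \tau)\,(\Res_e^{C_2}x)^2 . \]
The representation sphere $S^{n\rho} = S^{n}\wedge S^{n\sigma}$ carries the action that is trivial on the $S^n$ factor and antipodal-type on the $S^{n\sigma}$ factor. Hence the involution has degree $(-1)^n$, since each copy of $\sigma$ contributes a reflection of degree $-1$. For negative $n$ I would read $S^{n\rho}$ as a formal inverse; the degree is still $(-1)^n$ because $(-1)^{-n} = (-1)^n$. Alternatively, the lemma's proof only uses that $\tau$ acts on $\pi_{|V|}$ of the sphere by $\pm1$, and that sign is multiplicative in $V$.

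Next I compute the restriction of the right-hand side. Restriction is a ring map, so $\Res_e^{C_2}\big((-1)^n x^2\big) = (-1)^n(\Res_e^{C_2}x)^2$, and the two restrictions agree. Injectivity of $\Res_e^{C_2}$ on $\pi_{2n\rho}^{C_2}(R)$ then gives $\Nm_e^{C_2}\Res_e^{C_2}x = (-1)^n x^2$.

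The step needing the most care is the degree computation. One has to make sure the sign convention for $\tau_*$ on $\pi_{n\rho}$ in \cref{lemma: res n res} matches the representation-sphere degree, including for negative $n$. One also has to confirm that the isomorphism $\Res_e^{C_2}\colon\pi_{m\rho}^{C_2}\to\pi_{2m}^e$ is genuinely part of strong evenness and not merely a consequence of Real orientability; in Hill--Meier it is part of the definition. Everything else is formal.
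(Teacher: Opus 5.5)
Your proposal is correct and is exactly the paper's argument: apply \cref{lemma: res n res} with $V = n\rho$, where $\deg\tau = (-1)^n$, and conclude using that $\Res_e^{C_2}\colon \pi_{2n\rho}^{C_2}(R) \to \pi_{4n}^e(R)$ is an isomorphism because $R$ is strongly even. Your remarks on the degree computation and on negative $n$ (where $n\rho$ is only a virtual representation) make explicit details that the paper leaves implicit.
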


\begin{proof}
    Combine the previous lemma (\cref{lemma: res n res}) with the strongly even property, i.e.~that the restriction map $\Res_e^{C_2} \colon \pi_{2n\rho}^{C_2}(R) \to \pi_{4n}^e(R)$ is an isomorphism.
\end{proof}

\begin{proposition} \label{prop: localizations are commutative}
	The $C_2$-spectra $\Sigma_+^{\infty} \CPR^{\infty}[\beta^{-1}_{\R}]$ and $\Sigma_+^{\infty} \BU_{\R}[\beta^{-1}_{\R}]$ admit $\E_{\infty}^{C_2}$-algebra structures.
\end{proposition}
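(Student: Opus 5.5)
The plan is to use the standard criterion for when localizing an $\E_{\infty}^{C_2}$-ring at an element again yields an $\E_{\infty}^{C_2}$-ring. The spaces $\CPR^{\infty}$ and $\BU_{\R}$ are $C_2$-$\E_\infty$-spaces, namely infinite Real loop spaces, being $\Omega^\infty\Sigma^{\rho}\uHZ$-type resp.\ $\Omega^{\infty}\Sigma^{\rho}\kuR$. So $\Sigma_+^{\infty}\CPR^{\infty}$ and $\Sigma_+^{\infty}\BUR$ are $\E_{\infty}^{C_2}$-ring spectra. For an $\E_{\infty}^{C_2}$-ring $R$ and an $\E_\infty$-localization $R \to R[x^{-1}]$ in $C_2$-spectra, the localization inherits a unique $\E_{\infty}^{C_2}$-structure if and only if it is compatible with norms. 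Concretely, the condition (in the style of Hill--Hopkins and the work on localizations of $G$-commutative rings) is that $R[x^{-1}]$ is still a localization after applying the norm. For $C_2$ this reduces to one requirement: the element $\Nm_e^{C_2}\Res_e^{C_2}x$ must become invertible in $R[x^{-1}]$. Equivalently, the Bousfield localization of $R$-modules inverting $x$ must be closed under the operation $M \mapsto R \otimes_{N_e^{C_2}\Res_e^{C_2}R} N_e^{C_2}\Res_e^{C_2}M$. I take this criterion as the key input; it is the equivariant analog of the fact that inverting elements preserves $\E_\infty$-structures.

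Applying the criterion with $x=\beta_{\R}$, it suffices to show that $\Nm_e^{C_2}\Res_e^{C_2}\beta_{\R} = \Nm_e^{C_2}\beta \in \pi_{2\rho}^{C_2}$ maps to a unit in $L \coloneqq \Sigma_+^{\infty}\BUR[\beta_{\R}^{-1}]$, and likewise for $\CPR^\infty$. The target $L$ is only known to be an $\E_\infty$-ring in $\Sp^{C_2}$, not yet normed. So I would compute the image of $\Nm_e^{C_2}\beta$ via the $\E_{\infty}^{C_2}$-map $R \to$ (anything normed), or argue directly. The cleanest route: the unit of $L$ is detected after restriction and geometric fixed points. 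Underlying, $\Res_e^{C_2}\Nm_e^{C_2}\beta = \pm\beta^2$ by \cref{lemma: res n res}, which is a unit in $\Res_e^{C_2}L = \Sigma_+^\infty\BU[\beta^{-1}]$.

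To promote this to a $C_2$-equivariant unit, I would use that $L$ is strongly even (\cref{prop: strongly even}). The image $y$ of $\Nm_e^{C_2}\beta$ in $\pi_{2\rho}^{C_2}L$ then satisfies $\Res(y)=\pm\Res(\beta_{\R}^2)$. By strong evenness, restriction $\pi_{2\rho}^{C_2}L\to\pi_4^eL$ is an isomorphism, so $y=\pm\beta_{\R}^2$. This is the same argument as \cref{corollary: Nm Res in strongly even}, applied in the homotopy-commutative ring $L$, where the image of the norm class makes sense via $R\to L$. Hence $y$ is a unit, and the criterion applies.

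The main obstacle is the first step. I need to state and justify precisely the norm-compatibility criterion for localizations of $\E_{\infty}^{C_2}$-rings at an element of $RO(C_2)$-graded degree. I also need to check that it only requires invertibility of $\Nm_e^{C_2}\Res_e^{C_2}\beta_{\R}$ in the already-formed $\E_\infty$-localization, rather than some condition phrased in $R$ itself. I would try to deduce this from the fact that $L$ is a smashing localization of $R$-modules and that normed structures descend along localizations whose local objects are closed under the indexed tensor products $N_e^{C_2}$.
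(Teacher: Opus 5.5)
Your proposal is correct and matches the paper's argument. The paper invokes Hill--Hopkins \cite[Proposition 4.9]{hillHopkins2014equivariantmultiplicativeclosure} for the norm-closed multiplicative subset generated by \(\beta_{\R}\), which is the criterion you flag as the main input, and then uses strong evenness of \(\Sigma_+^{\infty}\BUR[\beta_{\R}^{-1}]\) with the restriction formula to see that \(\Nm_e^{C_2}\beta\) equals \(\pm\beta_{\R}^2\) there, hence is already invertible. You compute \(\Res_e^{C_2}\Nm_e^{C_2}\beta\) in the normed ring \(\Sigma_+^{\infty}\BUR\) before pushing to the not-yet-normed localization, and you keep the sign \(\pm\) (it is in fact \(-\)), which is slightly more careful than the paper's wording.
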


\begin{proof}
    Let us only discuss $\Sigma_+^{\infty} \BUR[\beta_{\R}^{-1}]$, the same argument works for $\Sigma_+^{\infty} \CP^{\infty}_{\R}[\beta^{-1}_{\R}]$. Let us first note that $\Sigma_+^{\infty} \BUR$ admits an $\E_{\infty}^{C_2}$-algebra structure since $\BUR \simeq \Omega^{\infty} \Sigma^{\rho} \ku_{\R}$ does. Now, let $\ul{S} \subseteq \ul{\pi}_{\star}(\Sigma_+^{\infty}\BUR)$ be the smallest $C_2$-subset of the $\RO(C_2)$-graded $C_2$-homotopy groups of $\Sigma_+^{\infty} \BUR$ that is closed under norms and contains $\beta_{\R} \in \pi_{\rho}^{C_2}(\Sigma_+^{\infty} \BUR)$.
    
    Then, the module localization $\Sigma_+^{\infty}\BUR[\ul{S}^{-1}]$ admits an $\E_{\infty}^{C_2}$-algebra structure by \cite[Proposition 4.9]{hillHopkins2014equivariantmultiplicativeclosure}. We wish to see 
    \[ \Sigma_+^{\infty} \BUR[\ul{S}^{-1}] \simeq \Sigma_+^{\infty} \BUR[\beta_{\R}^{-1}][\ul{S}^{-1}] \simeq \Sigma_+^{\infty} \BUR[\beta_{\R}^{-1}]. \] 
    First, let us write $\beta \coloneqq \Res_e^{C_2} \beta_{\R} \in \pi_2^e(\Sigma_+^{\infty} \BUR[\beta_{\R}^{-1}]) \cong \pi_2(\Sigma_+^{\infty} \BUR[\beta^{-1}])$. Next, let us consider the element $N_e^{C_2} \beta \in \pi_{2\rho}(\Sigma_+^{\infty} \BUR[\beta_{\R}^{-1}])$. Since $\Sigma_+^{\infty} \BUR[\beta_{\R}^{-1}]$ is strongly even (\cref{prop: strongly even}), we deduce $\Nm_e^{C_2}\beta = \beta_{\R}^2$ by \cref{corollary: Nm Res in strongly even}. 
    
    With this computation we now see that the $C_2$-set $\ul{S}$ is given by
    \[ \ul{S}_{C_2} = \{\beta_{\R}^i \Nm_e^{C_2}(\beta)^j \mid i,j \geq 0 \} \quad \text{and} \quad \ul{S}_e = \{\beta^k \mid k \geq 0 \}. \]
    All of these elements are already inverted in $\Sigma_+^{\infty} \BUR[\beta_{\R}^{-1}]$.
\end{proof}

\begin{theorem} \label{theorem: Erho Real Snaith}
	There are equivalences
    \[ \KU_{\R} \simeq \Sigma_+^{\infty} \CPR^{\infty}[\beta_{\R}^{-1}] \quad \text{and} \quad \MUP_{\R} \simeq \Sigma_+^{\infty} \BU_{\R}[\beta_{\R}^{-1}] \]
	of $\E_{\infty}^{C_2}$-algebras resp.~of $\E_{\rho}$-algebras.
\end{theorem}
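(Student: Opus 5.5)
The plan is to reduce the equivariant statement to the known non-equivariant Snaith equivalences, using that equivalences between strongly even $C_2$-spectra are detected on underlying spectra. By \cref{prop: strongly even} and \cref{prop: localizations are commutative}, both $\Sigma_+^{\infty}\CPR^{\infty}[\beta_{\R}^{-1}]$ and $\Sigma_+^{\infty}\BUR[\beta_{\R}^{-1}]$ are strongly even periodic $\E_{\infty}^{C_2}$-ring spectra. The sources $\KU_{\R}$ and $\MUPR$ are strongly even as well (Hill--Meier). It therefore suffices to produce a structured $C_2$-map that restricts to the classical Snaith equivalence. A map of strongly even $C_2$-spectra is an equivalence once its underlying map is: by Greenlees' gap characterization, the $C_2$-fixed homotopy is determined by $\pi_{*\rho}^{C_2}\cong\pi_{2*}^e$ together with the gaps. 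Alternatively, the cofiber is strongly even with vanishing underlying spectrum, hence zero.

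For $\MUPR$, take the Hahn--Yuan $\E_2$-equivalence $\MUP\to\Sigma_+^{\infty}\BU[\beta^{-1}]$. Since $\Res_e^{C_2}\Sigma_+^{\infty}\BUR[\beta_{\R}^{-1}]\simeq\Sigma_+^{\infty}\BU[\beta^{-1}]$, and the target is strongly even periodic with an $\E_{\infty}^{C_2}$-structure, \cref{theorem: MUR MUPR lifting result}(ii) with $n=1$ lifts this map uniquely to an $\E_{\rho}$-map $\MUPR\to\Sigma_+^{\infty}\BUR[\beta_{\R}^{-1}]$. Its underlying map is the Hahn--Yuan equivalence, so by the detection principle it is an equivalence of $\E_{\rho}$-algebras.

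For $\KU_{\R}$ we want $\E_{\infty}^{C_2}$-structure, which the lifting theorem does not give. Instead I would write the map down directly. The Real $J$/determinant structure gives an $\E_{\infty}^{C_2}$-map of $C_2$-spaces $\CPR^{\infty}\simeq\B U_{\R}(1)\to\GL_1$-type units of $\KU_{\R}$, namely the Real line bundle class (tautological line bundle) in $\Omega^{\infty}\KU_{\R}$ under tensor product. Adjointing gives an $\E_{\infty}^{C_2}$-map $\Sigma_+^{\infty}\CPR^{\infty}\to\KU_{\R}$. One checks that $\beta_{\R}$ maps to the Real Bott class $L-1$ restricted to $\CP^1_{\R}$, a unit in $\pi_{\rho}^{C_2}\KU_{\R}$. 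By \cref{prop: localizations are commutative} and the universal property of the Hill--Hopkins localization at the norm-closed set $\ul{S}$ (whose elements all map to units, since norms of units are units), the map factors through an $\E_{\infty}^{C_2}$-map $\Sigma_+^{\infty}\CPR^{\infty}[\beta_{\R}^{-1}]\to\KU_{\R}$. Its underlying map is Snaith's $\E_{\infty}$-equivalence, and the detection principle finishes.

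The main obstacle I expect is the $\KU_{\R}$ step. One has to verify carefully that the map $\CPR^{\infty}\to\Omega^{\infty}\KU_{\R}$ is genuinely $\E_{\infty}^{C_2}$, i.e.\ compatible with norms, landing in multiplicative units. One also has to justify the universal property of localization in $\E_{\infty}^{C_2}$-algebras. A fallback is to invert $\beta_{\R}$ only as an $\E_{\infty}$-algebra map of underlying $C_2$-spectra and then use that the localization map is an epimorphism-type (idempotent) map of $\E_{\infty}^{C_2}$-rings, so norm compatibility is automatic.
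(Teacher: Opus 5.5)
Your proposal is correct and follows essentially the same route as the paper. Equivalences are detected on underlying spectra via strong evenness (Hill--Meier), the Hahn--Yuan $\E_2$-equivalence is lifted to an $\E_{\rho}$-map using \cref{theorem: MUR MUPR lifting result}(ii) with $n=1$, and for $\KU_{\R}$ one writes down an $\E_{\infty}^{C_2}$-map $\Sigma_+^{\infty}\CPR^{\infty}\to\KU_{\R}$ and passes to the Hill--Hopkins localization. The only differences are cosmetic: the paper outsources the construction of that $\E_{\infty}^{C_2}$-map to Schwede's models, and it deduces that $\beta_{\R}$ maps to a unit from strong evenness rather than from identifying the image with the Real Bott class.
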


\begin{proof}
	Since equivalences between strongly even $C_2$-spectra are checked underlying \cite[Lemma 3.4]{hillmeier2017}, it suffices to lift the classical Snaith equivalences to $C_2$-equivariant ones. 
    
    Let us begin with the $\KU_{\R}$-Snaith. By the universal property of localization \cite[Proposition 4.9]{hillHopkins2014equivariantmultiplicativeclosure} and the computation of the multiplicative equivariant closure of $\beta_{\R}$ in \cref{prop: localizations are commutative}, it suffices to write down an $\E_{\infty}^{C_2}$-algebra map $\Sigma_+^{\infty} \CPR^{\infty} \to \KU_{\R}$, which inverts $\beta_{\R}$ and refines the classical Snaith map. Such a map can be written down in models, as e.g.~done in \cite[Construction 4.3]{schwede2026realglobalequivariantsegalbeckersplitting}. Since the classical Snaith map inverts $\beta$, the lift must invert $\beta_{\R}$ by strong evenness.

    Onto the $\MUP_{\R}$-Snaith. By \cref{prop: strongly even} and \cref{prop: localizations are commutative} the assumptions in \cref{theorem: MUR MUPR lifting result} are satisfied, so any $\E_2$-Snaith equivalence $\MUP \to \Sigma_+^{\infty}\BU[\beta^{-1}]$ provided by Hahn--Yuan \cite[Proposition 3.1]{hahnYuan2020exotic} lifts uniquely to an $\E_{\rho}$-algebra map $\MUPR \to \Sigma_+^{\infty}\BUR[\beta_{\R}^{-1}]$.
\end{proof}

\begin{remark}
    Our proof in particular shows that every (structured) equivalence $\MUP \to \Sigma_+^{\infty}\BU[\beta^{-1}]$ admits a unique lift to the Real setting. Similarly, using \cite[Theorem D]{quinnZhu2026multiplicativeequivariantthomspectra}, every equivalence $\Sigma_+^{\infty}\CP^{\infty}[\beta^{-1}] \to \KU$ of spectra lifts uniquely to an equivalence $\Sigma_+^{\infty} \CP_{\R}^{\infty}[\beta_{\R}^{-1}] \to \KU_{\R}$ of $C_2$-spectra. In fact, this strategy gives an alternative proof of  $\KU_{\R}$-Snaith by lifting the non-equivariant Snaith map instead of writing down the map using models. The downside is that we are currently not able to obtain lifts of structured maps, so we only recover a non-structured $\KU_{\R}$-Snaith.
\end{remark}

\begin{remark}
    The Real Snaith theorem shows that $\Sigma_+^{\infty} \CP_{\R}^{\infty}[\beta_{\R}^{-1}]$ and $\Sigma_+^{\infty} \BUR[\beta_{\R}^{-1}]$ are Borel $C_2$-spectra.
\end{remark}

Let $G \geq C_2$ be a finite group. Applying $N_{C_2}^G$ yields higher group versions of the Real Snaith theorem, yielding a variant of \cite[Theorem 3.33]{hill2022freeness}.

\begin{corollary}
    Let $G \geq C_2$ be a finite group. There are equivalences
    \[ N_{C_2}^G \KU_{\R} \simeq \Sigma_+^{\infty} \Map_{C_2}(G, \CP_{\R}^{\infty})[N_{C_2}^G(\beta_{\R})^{-1}] \quad \text{and} \quad \MUP^{( \! ( G ) \! )} \simeq \Sigma_+^{\infty} \Map_{C_2}(G, \BUR)[N_{C_2}^G(\beta_{\R})^{-1}] \]
    of $\E_{\infty}^G$-algebras resp.~of $\Coind_{C_2}^G \E_{\rho}$-algebras.
\end{corollary}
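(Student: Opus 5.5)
The plan is to apply the norm functor $N_{C_2}^G \colon \Sp^{C_2} \to \Sp^G$ to the equivalences of \cref{theorem: Erho Real Snaith} and to identify the result on each side. Since $N_{C_2}^G$ is symmetric monoidal, it carries $\E_{\infty}^{C_2}$-algebras to $\E_{\infty}^G$-algebras. More generally, by the formal results cited before \cref{corollary: higher group structure examples} (\cite[Construction 2.2.5, Corollary 6.1.6]{quinnZhu2026multiplicativeequivariantthomspectra}), it carries $\mathcal{O}$-algebras in $C_2$-spectra to $\Coind_{C_2}^G \mathcal{O}$-algebras in $G$-spectra. Applying $N_{C_2}^G$ to $\KU_{\R} \simeq \Sigma_+^{\infty}\CPR^{\infty}[\beta_{\R}^{-1}]$ therefore gives an equivalence of $\E_{\infty}^G$-algebras. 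Applying it to $\MUPR \simeq \Sigma_+^{\infty}\BUR[\beta_{\R}^{-1}]$ gives an equivalence of $\Coind_{C_2}^G\E_{\rho}$-algebras, and the left-hand side is $\MUP^{(\!(G)\!)}$ by definition.

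Two identifications on the right-hand side remain. First, the suspension spectrum: by the standard compatibility of the norm with suspension spectra, $N_{C_2}^G \Sigma_+^{\infty} X \simeq \Sigma_+^{\infty} \Map_{C_2}(G, X)$, where $\Map_{C_2}(G,X)$ is the coinduced $G$-space. This holds as an equivalence of $\E_{\infty}^G$-algebras whenever $X$ is a $C_2$-$\E_{\infty}$-space. Here $X = \BUR$ or $\CPR^{\infty}$, and both are infinite loop $C_2$-spaces, so the multiplicative structures match.

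Second, the localization. Write the localization as a sequential colimit $\colim(\Sigma_+^\infty X \xrightarrow{\beta_{\R}} \Sigma^{-\rho}\Sigma_+^\infty X \to \cdots)$. The norm commutes with sifted colimits, in particular with filtered ones, but it is not exact, so I would not push it through the colimit naively. Instead I would use the universal property. The map $N_{C_2}^G(\Sigma_+^\infty X) \to N_{C_2}^G(\Sigma_+^\infty X[\beta_{\R}^{-1}])$ inverts $N_{C_2}^G(\beta_{\R})$, since the norm of a unit is a unit by monoidality. This gives a comparison map $\Sigma_+^\infty\Map_{C_2}(G,X)[N_{C_2}^G(\beta_{\R})^{-1}] \to N_{C_2}^G(\Sigma_+^\infty X[\beta_{\R}^{-1}])$. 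For the $\KU$ case this localization carries an $\E_\infty^G$-structure via \cite[Proposition 4.9]{hillHopkins2014equivariantmultiplicativeclosure}, applied to the norm-closed $G$-set generated by $N_{C_2}^G(\beta_{\R})$.

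To see that the comparison is an equivalence, I would use the fact that the norm of a telescope is the telescope of norms: $N(\colim_n Y_n) \simeq \colim_n N(Y_n)$ for filtered diagrams. Applied to the telescope $Y_n = \Sigma^{-n\rho}\Sigma_+^\infty X$ with transition maps $\beta_{\R}$, the transition maps become $N_{C_2}^G(\beta_{\R})$ and the terms become $\Sigma^{-n\,\mathrm{Ind}_{C_2}^G\rho}$-shifts of the normed spectrum. This identifies the target with the telescope defining the localization at $N_{C_2}^G(\beta_{\R})$.

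The main obstacle is this last step: matching the $G$-equivariant localization at the single class $N_{C_2}^G(\beta_{\R})$ with the norm of the localized $C_2$-spectrum. One has to check that inverting $N_{C_2}^G(\beta_{\R})$ automatically inverts its restrictions and conjugates at subgroups. By the double coset formula, $\Res^G_{C_2} N_{C_2}^G\beta_{\R}$ is a product of conjugates of $\beta_{\R}$ and norms of restrictions. Inverting this product inverts each factor, so the localization at this one class agrees with the multiplicative-closure localization, mirroring the argument of \cref{prop: localizations are commutative}.
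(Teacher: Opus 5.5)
Your proposal is correct and takes the paper's route: the paper just applies $N_{C_2}^G$ to \cref{theorem: Erho Real Snaith}, using that the norm sends $\mathcal{O}$-algebras to $\Coind_{C_2}^G\mathcal{O}$-algebras, and you additionally spell out the implicit identifications $N_{C_2}^G\Sigma_+^\infty X \simeq \Sigma_+^\infty\Map_{C_2}(G,X)$ and $N_{C_2}^G(Y[\beta_{\R}^{-1}]) \simeq (N_{C_2}^G Y)[N_{C_2}^G(\beta_{\R})^{-1}]$. Your telescope argument already settles the localization step and transports the $\E_\infty^G$-structure, because the norm commutes with sequential colimits and turns multiplication by $\beta_{\R}$ into multiplication by $N_{C_2}^G(\beta_{\R})$; so your worry about exactness and the final double-coset paragraph are not needed.
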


\begin{remark} \label{remark: history etc on snaith}
    \hfill 
    \begin{enumerate}[(i)]
        \item Gepner--Snaith \cite{gepnerSnaith2009motivicspectrarepresentingalgebraic}\footnote{Beware that there is a mistake in this paper, see \cite[Footnote 9]{annalaHoyoisIwasa2025algebraiccobordismconnerfloydisomorphism}.} resp.~Annala--Iwasa \cite[Proposition 4.3.3]{annalaIwasa2025motivicspectrauniversalityktheory} and Annala--Hoyois--Iwasa \cite[Theorem 9.3]{annalaHoyoisIwasa2025algebraiccobordismconnerfloydisomorphism} provided motivic analogues of the Snaith equivalences, so the Real Snaith theorems without any equivariant multiplicative structures were already implicitly known by $C_2$-Betti realization. Schwede had furthermore already generalized the $\KU$-Snaith to an equivalence of ultracommutative Real global spectra before us, thereby also providing a much more structured result. It will appear in forthcoming work of his, as e.g.~was announced in \cite[Construction 4.3]{schwede2026realglobalequivariantsegalbeckersplitting}. His student Song is currently working on a Real global analog of $\MUP$-Snaith.

        Nonetheless, our strategy via strong evenness provides a shortcut in the (non-global) Real setting. In particular, the $\E_{\rho}$-structure of the $\MUPR$-equivalence is novel and demonstrates the versatility of our lifting results \cite{quinnZhu2026multiplicativeequivariantthomspectra}.
        \item If $\Sigma_+^{\infty}\BU[\beta^{-1}] \simeq \MUP$ lifts to an $\E_4$-equivalence as conjectured by Hahn--Yuan \cite[Question 3]{hahnYuan2020exotic}, then our technique would allow us to lift this to an $\E_{2\rho}$-equivalence.
    \end{enumerate}
\end{remark}

Applying flavours of $C_2$-fixed points to the Real Snaith theorem yields real Snaith theorems. Let us spell out these results. Taking $C_2$-geometric fixed points turns the Real Bott element (\cref{construction: bott elements}) into the Hopf element $\eta \colon S^1 \to \RP^{\infty}$ resp.~$\eta \colon S^1 \to \RP^{\infty} \to \BO$.

The following is certainly known classically, but we use this opportunity to demonstrate an application of the Real Snaith theorem. It will afterwards appear in our description of the real Snaith theorems.

\begin{lemma} \label{lemma: RP bott inverted is 0}
    There is an equivalence $\Sigma_+^{\infty} \RP^{\infty}[\eta^{-1}] \simeq 0$.
\end{lemma}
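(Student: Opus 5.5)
The plan is to take $C_2$-geometric fixed points of the Real Snaith equivalence $\KU_{\R} \simeq \Sigma_+^{\infty}\CPR^{\infty}[\beta_{\R}^{-1}]$ from \cref{theorem: Erho Real Snaith}. Geometric fixed points $\Phi^{C_2}$ is symmetric monoidal and commutes with colimits, so it commutes with inverting an element. The localization $\Sigma_+^{\infty}\CPR^{\infty}[\beta_{\R}^{-1}]$ is the sequential colimit along multiplication by $\beta_{\R}$, so applying $\Phi^{C_2}$ gives the colimit along $\Phi^{C_2}\beta_{\R}$. We also have $\Phi^{C_2}\Sigma_+^{\infty} X \simeq \Sigma_+^{\infty} X^{C_2}$ and $(\CPR^{\infty})^{C_2} \simeq \RP^{\infty}$. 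As already stated in the text, $\Phi^{C_2}\beta_{\R}$ is the Hopf element $\eta \colon S^1 \to \RP^{\infty}$, since $(S^{\rho})^{C_2} = S^1$ is the bottom cell of $\RP^{\infty}$. Together these give
\[ \Phi^{C_2}\KU_{\R} \simeq \Sigma_+^{\infty}\RP^{\infty}[\eta^{-1}]. \]

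It then remains to show $\Phi^{C_2}\KU_{\R} \simeq 0$. I would argue this using that $\KU_{\R}$ is Borel (strongly even periodic), or more directly via the Real Bott element. Geometric fixed points is $\KU_{\R}[a_\sigma^{-1}]$, so it suffices that the Euler class $a_\sigma \in \pi_{-\sigma}^{C_2}\KU_{\R}$ is nilpotent.

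To see this, recall the classical relation in $\KU_{\R}$: $\eta = a_\sigma v_1$-type, or more concretely $a_\sigma^3 = 0$ (Atiyah's computation; the fiber of $a_\sigma$ on $\KU_{\R}$ relates to $\KO$, and $\eta^3=0$ in $\KO$ with $\eta$ the image of $a_\sigma$ under the $(1+\sigma)$-periodicity). Alternatively, I would use the cofiber sequence $C_{2+} \to S^0 \to S^{\sigma}$. If a strongly even periodic $E$ has $\Phi^{C_2}E \neq 0$, then it is a ring in which the image of $\beta_{\R}$ is invertible in degree $1$ and which receives an orientation. But $\Phi^{C_2}\MUR \simeq \MO$ is $2$-torsion, while $\Phi^{C_2}\KU_{\R}$ would have $\pi_0 \cong$ the Tate construction's $\pi_0$.

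The cleanest route I would commit to is this. Since $\KU_{\R}$ is strongly even and Real oriented, $\Phi^{C_2}\KU_{\R}$ is an $\MO$-module, so it is an $H\F_2$-module and hence has $\eta$ acting by zero. On the other hand, $\eta$ is invertible in it by the displayed equivalence. Therefore it vanishes.

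The main obstacle is justifying that $\eta$ acts by zero, that is, that the homotopy of an $H\F_2$-module (or $\MO$-module) is killed by the Hopf map. This holds because $\eta$ maps to zero in $\pi_1 H\F_2$, and $\MO$ is an $H\F_2$-algebra. So $\eta \cdot 1 = 0$ in the ring $\Phi^{C_2}\KU_{\R}$. Since $\eta$ is also a unit there, $1 = 0$.
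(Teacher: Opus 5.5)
Your first paragraph is exactly the paper's proof: apply $\Phi^{C_2}$ to the Real Snaith equivalence $\KU_{\R}\simeq\Sigma^\infty_+\CPR^\infty[\beta_{\R}^{-1}]$ to get $\Sigma^\infty_+\RP^\infty[\eta^{-1}]\simeq\Phi^{C_2}\KU_{\R}$. The paper then simply invokes the standard fact $\Phi^{C_2}\KU_{\R}\simeq 0$.

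The gap is in the argument you commit to for this vanishing. The class being inverted is $\eta=\Phi^{C_2}\beta_{\R}$. This is the bottom cell $S^1=\RP^1\to\RP^\infty$, lying in the reduced summand $\pi_1\Sigma^\infty\RP^\infty\cong\Z/2$ of $\pi_1\Sigma^\infty_+\RP^\infty$; the paper's name ``Hopf element'' is an abuse. It is \emph{not} the image of the Hopf map $\eta\in\pi_1\S$ under the unit. An $\mathrm{MO}$-algebra structure kills only the latter, because $\pi_1\mathrm{MO}=0$. It says nothing about the image of a degree-one class coming from $\RP^\infty$.

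In fact your argument proves too much. $\MUP_{\R}$ is also strongly even periodic and Real oriented, so the same reasoning would show that $\Phi^{C_2}\MUP_{\R}\simeq\MOP\simeq\Sigma^\infty_+\BO[\eta^{-1}]$ (\cref{corollary: real Snaith}) vanishes. But $\MOP$ is a nonzero $\mathrm{MO}$-algebra with $\pi_*\MOP\cong\pi_*\mathrm{MO}[u^{\pm1}]$ and $|u|=1$. The same example disposes of your other tentative routes (Borelness, strong even periodicity plus a Real orientation): none of them distinguishes $\KU_{\R}$ from $\MUP_{\R}$.

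What is missing is an input specific to $K$-theory. The first alternative you sketched does work, and can be made precise in one line:
\[ a_\sigma^3\beta_{\R}^3\in\pi^{C_2}_{3}\KU_{\R}\cong\pi_3\KO=0. \]
Since $\beta_{\R}$ is a unit by Real Bott periodicity, $a_\sigma^3=0$. Hence
\[ \Phi^{C_2}\KU_{\R}\simeq\bigl(\KU_{\R}\otimes\widetilde{E C_2}\bigr)^{C_2}\simeq\bigl(\KU_{\R}[a_\sigma^{-1}]\bigr)^{C_2}\simeq 0. \]

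Alternatively, you can bypass $\KU_{\R}$ and show directly that $\eta$ is nilpotent in $\pi_*\Sigma^\infty_+\RP^\infty$ via the nilpotence theorem, as the paper does for $r\beta$ in \cref{theorem: Nm localization snaith constructions}. The class $\eta^2\in\pi_2\Sigma^\infty_+\RP^\infty$ is torsion, while $\MU_2(\Sigma^\infty_+\RP^\infty)\cong\Z$. So $\eta^2$ has trivial $\MU$-Hurewicz image and is therefore nilpotent.
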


\begin{proof}
    Applying $\Phi^{C_2}$ to the $\KU_{\R}$-Snaith (\cref{theorem: Erho Real Snaith}) yields $\Sigma_+^{\infty} \RP^{\infty}[\eta^{-1}] \simeq \Phi^{C_2} \KU_{\R} \simeq 0$.
\end{proof}

\begin{corollary} \label{corollary: real Snaith}
	There are equivalences 
	\[ \KO \simeq \Sigma_+^{\infty} \CP^{\infty}_{\R}[\beta_{\R}^{-1}]_{hC_2} \quad \text{and} \quad \MOP \simeq \Sigma_+^{\infty} \BO[\eta^{-1}]. \]
	The latter enhances to an $\E_1$-equivalence.
\end{corollary}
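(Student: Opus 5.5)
The plan is to apply fixed point functors to \cref{theorem: Erho Real Snaith}: homotopy orbits for $\KO$, and geometric fixed points for $\MOP$.

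\textbf{The $\KO$ statement.} Recall that $\KO \simeq \KU_{\R}^{C_2}$, and that $\KU_{\R}$ is Borel by \cref{theorem: Erho Real Snaith} together with strong evenness, so $\KO \simeq \KU^{hC_2}$. The claim is phrased with homotopy orbits, so I will use the Tate square
\[ \KU_{\R hC_2} \to \KU_{\R}^{C_2} \to \KU_{\R}^{tC_2}. \]
Since $\Phi^{C_2}\KU_{\R} \simeq \Sigma_+^{\infty}\RP^{\infty}[\eta^{-1}] \simeq 0$ by \cref{lemma: RP bott inverted is 0}, the isotropy separation sequence $X_{hC_2} \to X^{C_2} \to \Phi^{C_2}X$ gives $\KU_{\R hC_2} \simeq \KU_{\R}^{C_2} \simeq \KO$. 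Combining this with \cref{theorem: Erho Real Snaith} yields $\KO \simeq \Sigma_+^{\infty}\CPR^{\infty}[\beta_{\R}^{-1}]_{hC_2}$, where the homotopy orbits are those of the underlying spectrum with its $C_2$-action.

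\textbf{The $\MOP$ statement.} I will apply $\Phi^{C_2}$ to the $\E_\rho$-equivalence $\MUPR \simeq \Sigma_+^{\infty}\BUR[\beta_{\R}^{-1}]$. Three inputs are needed:
\begin{enumerate}[(i)]
\item $\Phi^{C_2}$ is symmetric monoidal and commutes with colimits, so it commutes with localizations: $\Phi^{C_2}(R[\beta_{\R}^{-1}]) \simeq (\Phi^{C_2}R)[\Phi^{C_2}\beta_{\R}^{-1}]$.
\item $\Phi^{C_2}\Sigma_+^{\infty}\BUR \simeq \Sigma_+^{\infty}\BUR^{C_2} \simeq \Sigma_+^{\infty}\BO$, under which $\Phi^{C_2}\beta_{\R} = \eta$, as noted before \cref{lemma: RP bott inverted is 0}.
\item $\Phi^{C_2}\MUPR \simeq \MOP$. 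For this I take $\MUPR$ as the Thom spectrum of $\BUP_{\R} \to \ul{\Pic}$; geometric fixed points of a $C_2$-Thom spectrum are the Thom spectrum of the fixed-point map, here $\BUP_{\R}^{C_2} = \mathbb{Z}\times\BO \to \Pic(\Sp)$, which is $\MOP$.
\end{enumerate}

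For the $\E_1$-enhancement, note that $\Phi^{C_2}$ is symmetric monoidal, so it sends $\E_\rho$-algebras to $\E_{\rho^{C_2}} = \E_1$-algebras. This is the step that needs care: I expect to have to justify that geometric fixed points of an $\E_V$-algebra is an $\E_{V^{C_2}}$-algebra, via the map of operads $\E_1 \to \E_\rho$ given by the inclusion of fixed disks, or equivalently by restricting along $\E_1 \to \Res$ of the fixed-point operad. I also need the identification $\Phi^{C_2}\beta_{\R} = \eta$ compatibly with the localization. I expect this $\E_1$ bookkeeping to be the main obstacle; the underlying equivalences are formal once (i)--(iii) are in place.
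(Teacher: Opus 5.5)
Your proposal is correct and follows the paper's proof. For $\MOP$ you apply $\Phi^{C_2}$ to the $\E_\rho$-Real Snaith equivalence, and for $\KO$ you take $C_2$-fixed points and use the vanishing of $\Phi^{C_2}\KU_{\R}\simeq\Sigma_+^{\infty}\RP^{\infty}[\eta^{-1}]$. Your isotropy separation sequence $X_{hC_2}\to X^{C_2}\to\Phi^{C_2}X$ plays the role of the paper's tom Dieck splitting, and is slightly cleaner, since the localized spectrum is not literally a suspension spectrum. The aside that $\KU_{\R}$ is Borel ``by strong evenness'' is misjustified: Borelness of $\KU_{\R}$ is classical, and the paper's remark deduces Borelness of the localization from it, not the reverse. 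This does no harm, since your argument never uses it.
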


\begin{proof}
	The second equivalence follows from the Real Snaith theorem (\cref{theorem: Erho Real Snaith}) after applying geometric fixed points $\Phi^{C_2}$, and the $\E_1$-structure comes from the $\E_{\rho}$-structure. For the first equivalence, applying $(-)^{C_2}$ to the Real Snaith theorem (\cref{theorem: Erho Real Snaith}) and tom Dieck splitting first yields
    \[ \KO \simeq \Sigma_+^{\infty} \RP^{\infty}[\eta^{-1}] \oplus \Sigma_+^{\infty} \CP^{\infty}_{\R}[\beta_{\R}^{-1}]_{hC_2}. \]
    On the other hand, $\Sigma_+^{\infty} \RP^{\infty}[\eta^{-1}] \simeq 0$ by \cref{lemma: RP bott inverted is 0}.
\end{proof}

\begin{remark}
    Similarly, \cref{theorem: Erho Real Snaith} with tom Dieck splitting gives rise to a formula for $\MUP_{\R}^{C_2}$ as well, namely $\MUP_{\R}^{C_2} \simeq \Sigma_+^{\infty} \BO[\eta^{-1}] \oplus \Sigma_+^{\infty} \BUR[\beta_{\R}^{-1}]_{hC_2}$.
\end{remark}

\subsection{Norm inverted Real Snaith theorems}
In preparation for $\THR$ computations (\cref{theorem: THR KUR}) we will now use the Real Snaith theorems (\cref{theorem: Erho Real Snaith}) to obtain an alternate Snaith construction. Let us begin by recording a (certainly well-known) classical fact about localizations.

\begin{lemma} \label{lemma: localization and nilpotent}
    Let $R$ be an $\E_{\infty}$-ring spectrum and $n \in \N$ with $x,\varepsilon \in \pi_n R$. Suppose that $\varepsilon$ is nilpotent in $\pi_* R$. Then, the localization map $R \to R[(x+\varepsilon)^{-1}]$ induces an equivalence $R[x^{-1}] \xrightarrow{\ \simeq \ } R[(x+\varepsilon)^{-1}]$ of $\E_{\infty}$-ring spectra.
\end{lemma}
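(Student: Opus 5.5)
The plan is to use the universal property of localization of $\E_{\infty}$-rings. For a homotopy class $y$, the localization $R \to R[y^{-1}]$ is characterized among $\E_{\infty}$-$R$-algebras $A$: the space of $\E_{\infty}$-$R$-algebra maps $R[y^{-1}] \to A$ is contractible if $y$ maps to a unit in $\pi_* A$, and empty otherwise. The equivalence therefore reduces to an algebraic statement in the graded-commutative ring $\pi_* A$: the image of $x$ is a unit if and only if the image of $x + \varepsilon$ is a unit. Once this is known for every $R$-algebra $A$, both localizations corepresent the same subfunctor of $\Map_{\Alg_{\E_\infty}}(R, -)$. By Yoneda they are then canonically equivalent as $\E_\infty$-$R$-algebras.

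Concretely, we first apply the universal property to $A = R[(x+\varepsilon)^{-1}]$. To get an $R$-algebra map $R[x^{-1}] \to R[(x+\varepsilon)^{-1}]$, which is necessarily compatible with the localization maps from $R$, we must show that $x$ becomes invertible there. In $\pi_* A$ we write $x = (x+\varepsilon) - \varepsilon = (x+\varepsilon)\bigl(1 - (x+\varepsilon)^{-1}\varepsilon\bigr)$. The element $u = (x+\varepsilon)^{-1}\varepsilon$ is nilpotent, since $\varepsilon$ is nilpotent and the relevant elements commute up to sign. So $1 - u$ is a unit with inverse the finite sum $\sum_k u^k$, and hence $x$ is a unit. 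Symmetrically, $x+\varepsilon = x(1 + x^{-1}\varepsilon)$ is a unit in $\pi_* R[x^{-1}]$, which gives a map back. Both composites are $R$-algebra endomorphisms of localizations. By the contractibility part of the universal property, they are homotopic to the identity. Hence $R[x^{-1}] \to R[(x+\varepsilon)^{-1}]$ is an equivalence of $\E_\infty$-rings, and it is the map induced by the localization map $R \to R[(x+\varepsilon)^{-1}]$, as claimed.

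The only point needing care is graded commutativity when $n$ is odd. Then $x$ and $\varepsilon$ anticommute up to the sign $(-1)^{n^2}$, and $(x+\varepsilon)^{-1}$ lies in degree $-n$. The element $u = (x+\varepsilon)^{-1}\varepsilon$ still lies in degree $0$, and its powers are, up to sign, $(x+\varepsilon)^{-k}\varepsilon^k$. These vanish for $k$ large because $\varepsilon^k = 0$, so the geometric series argument still works. Alternatively one can reduce to $n$ even: if $n$ is odd, $2x^2 = 0$, and inverting $x$ or $x+\varepsilon$ forces $2 = 0$. It is cleaner, though, to note that being a unit is detected by the sign-insensitive factorization above. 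I expect no further obstacle. The lemma is purely formal once the universal property of $\E_\infty$-localization at a homotopy class is invoked.
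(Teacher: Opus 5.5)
Your proposal is correct and follows essentially the same route as the paper. You show that $x$ is a unit in $R[(x+\varepsilon)^{-1}]$ via $x = (x+\varepsilon)(1-(x+\varepsilon)^{-1}\varepsilon)$, and that $x+\varepsilon$ is a unit in $R[x^{-1}]$, so the universal property gives maps in both directions whose composites are identities; your extra care with signs for odd $n$ is a harmless refinement that the paper leaves implicit.
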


\begin{proof}
    First, we need to show that $x$ is invertible in $R[(x+\varepsilon)^{-1}]$ so that the localization map induces a map $R[x^{-1}] \to R[(x+\varepsilon)^{-1}]$. We see this by writing $x = (x+\varepsilon) \left(1-\frac{\varepsilon}{x+\varepsilon} \right)$, which is a product of two units: $x+\varepsilon$ is invertible in $R[(x+\varepsilon)^{-1}]$ by construction and the second term is the sum of a unit and a nilpotent element and thus invertible.

    Conversely, $x+\varepsilon$ as a sum of a unit and a nilpotent element is invertible in $R[x^{-1}]$, so the localization map $R \to R[x^{-1}]$ induces a map $R[(x+\varepsilon)^{-1}] \to R[x^{-1}]$.

    Composing these two maps yields two maps
    \[ R[x^{-1}] \longrightarrow R[x^{-1}] \quad \text{and} \quad R[(x+\varepsilon)^{-1}] \longrightarrow R[(x+\varepsilon)^{-1}], \]
    both induced by the localization map. Thus, these must be the identities, which means that we have constructed inverse maps.
\end{proof}

\begin{lemma} \label{theorem: Nm localization snaith constructions}
    The $C_2$-spectra $\Sigma_+^{\infty} \CP^{\infty}_{\R}[\Nm_e^{C_2}(\beta)^{-1}]$ and $\Sigma_+^{\infty} \BUR[\Nm_e^{C_2}(\beta)^{-1}]$ admit $\E_{\infty}^{C_2}$-algebra structures such that there are equivalences
    \[ \Sigma_+^{\infty} \CP^{\infty}_{\R}[\Nm_e^{C_2}(\beta)^{-1}] \simeq \Sigma_+^{\infty} \CP_{\R}^{\infty}[\beta_{\R}^{-1}] \quad \text{and} \quad \Sigma_+^{\infty} \BUR[\Nm_e^{C_2}(\beta)^{-1}] \simeq \Sigma_+^{\infty} \BUR[\beta_{\R}^{-1}] \]
    of $\E_{\infty}^{C_2}$-algebras.
\end{lemma}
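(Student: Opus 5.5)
The plan is to compare both localizations with the intermediate object $R \coloneqq \Sigma_+^{\infty}\BUR$; the $\CP^{\infty}_{\R}$ case is identical. There are three steps: produce the $\E_{\infty}^{C_2}$-structure, compare $\Nm_e^{C_2}(\beta)$ with $\beta_{\R}^2$ in $\pi_{2\rho}^{C_2}(R)$, and conclude by a nilpotent-perturbation argument as in \cref{lemma: localization and nilpotent}.

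\textbf{Step 1: the $\E_{\infty}^{C_2}$-structure.} The element $\Nm_e^{C_2}(\beta) \in \pi^{C_2}_{2\rho}(R)$ is a norm. So the smallest norm-closed $C_2$-subset of $\ul{\pi}_\star R$ containing it is $\{\Nm_e^{C_2}(\beta)^j\}$ at level $C_2$ and $\{(\beta\,\tau_*\beta)^k\}$ at level $e$. By \cite[Proposition 4.9]{hillHopkins2014equivariantmultiplicativeclosure}, $R[\Nm_e^{C_2}(\beta)^{-1}]$ is therefore an $\E_{\infty}^{C_2}$-algebra, with the usual universal property.

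In $R[\beta_{\R}^{-1}]$, which is strongly even by \cref{prop: strongly even}, \cref{corollary: Nm Res in strongly even} gives $\Nm_e^{C_2}(\beta) = -\beta_{\R}^2$, a unit. By that universal property and \cref{prop: localizations are commutative}, we obtain an $\E_{\infty}^{C_2}$-map
\[ R[\Nm_e^{C_2}(\beta)^{-1}] \to R[\beta_{\R}^{-1}]. \]
It remains to show this map is an equivalence. I would check this on underlying spectra and on $\Phi^{C_2}$.

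\textbf{Step 2: comparing $\Nm_e^{C_2}(\beta)$ with $\beta_{\R}^2$.} Set $\delta \coloneqq \Nm_e^{C_2}(\beta) + \beta_{\R}^2 \in \pi^{C_2}_{2\rho}(R)$.

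On underlying spectra, \cref{lemma: res n res} with $\deg\tau=-1$ on $S^{\rho}$ gives $\Res_e^{C_2}\delta = -\beta^2+\beta^2 = 0$. Hence inverting $\Nm_e^{C_2}(\beta)$ underlying amounts to inverting $\beta^2$, equivalently $\beta$, and the underlying map is an equivalence.

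For geometric fixed points I want $\Phi^{C_2}\delta$ to be nilpotent in $\pi_2 \Sigma_+^{\infty}\BO$. Then $\Phi^{C_2}\Nm_e^{C_2}(\beta) = -\eta^2 + (\text{nilpotent})$, and \cref{lemma: localization and nilpotent} identifies $\Sigma_+^{\infty}\BO[\Phi\Nm(\beta)^{-1}]$ with $\Sigma_+^{\infty}\BO[\eta^{-2}] = \Sigma_+^{\infty}\BO[\eta^{-1}]$. Here we use that $\Phi^{C_2}$ is symmetric monoidal and commutes with localization, and that $\Phi^{C_2}\beta_{\R} = \eta$. Alternatively, one could show that $\delta$ itself is nilpotent in $\pi_\star^{C_2}R$, using that nilpotence in $C_2$-equivariant ring spectra is detected by $\Res_e^{C_2}$ and $\Phi^{C_2}$ together, and then run the argument of \cref{lemma: localization and nilpotent} directly in $\RO(C_2)$-graded homotopy.

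\textbf{Step 3: the main obstacle.} The hard part is proving nilpotence of $\Phi^{C_2}\delta$ without computing $\Nm_e^{C_2}(\beta)$ explicitly. I would use the nilpotence theorem of Devinatz--Hopkins--Smith for the ring spectrum $\Sigma_+^{\infty}\BO$, which reduces the claim to nilpotence of the Hurewicz image of $\Phi^{C_2}\delta$ in $\MU_*\BO$.

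To control that image, I would use the Real Snaith theorem \cref{theorem: Erho Real Snaith} and its consequence $\Phi^{C_2}(R[\beta_{\R}^{-1}]) \simeq \MOP$. The image of $\delta$ in $R[\beta_{\R}^{-1}]$ vanishes, because there $\Nm_e^{C_2}(\beta) = -\beta_{\R}^2$. So $\Phi^{C_2}\delta$ dies after inverting $\eta$, which means $\eta^k\,\Phi^{C_2}\delta = 0$ for some $k$. I would try to combine this with the fact that $\eta$ becomes zero, and nilpotent elements are detected, after smashing with $\MU$ (or with the Morava $K$-theories $K(n)$). The goal is to show that the $\MU$-Hurewicz image of $\Phi^{C_2}\delta$ in the polynomial-type ring $\MU_*\BO$ is nilpotent.

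This last algebraic verification is where I expect the real difficulty to lie. If the $\MU$ route is awkward, my fallback is to work with $\Phi^{C_2}$ of the strongly even $\MUR \otimes R$, which is $\MO \otimes \Sigma_+^{\infty}\BO$, and to run the nilpotence argument there.
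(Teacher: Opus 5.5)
Your Steps 1 and 2 follow the paper's proof. That covers Hill--Hopkins for the $\E_\infty^{C_2}$-structure, the comparison map $R[\Nm_e^{C_2}(\beta)^{-1}]\to R[\beta_{\R}^{-1}]$, the underlying check via \cref{lemma: res n res}, and the reduction via \cref{lemma: localization and nilpotent} to nilpotence of $\Phi^{C_2}\delta$ in $\pi_2\Sigma^\infty_+\BO$. But that nilpotence is the only non-formal content of the lemma, and Step 3 does not establish it.

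The mechanism you propose rests on a false premise. The class $\eta=\Phi^{C_2}\beta_{\R}$ is the bottom cell $\RP^1\to\RP^\infty\to\BO$, not the Hopf map in $\pi_1\S$. Its $\MU$-Hurewicz image is the generator of $\MU_1(\Sigma^\infty\BO)\cong H_1(\BO;\Z)\cong\Z/2$. It is not even nilpotent in $\MU_*\BO$: by Devinatz--Hopkins--Smith that would make $\eta$ nilpotent, contradicting $\Sigma^\infty_+\BO[\eta^{-1}]\simeq\MOP\neq 0$.

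More fundamentally, the relation $\eta^k\,\Phi^{C_2}\delta=0$ says something about $\Phi^{C_2}\delta$ only in a theory where $\eta$ is a non-zero-divisor. Where $\eta$ vanishes or is nilpotent it is vacuous, e.g.\ in $K(n)$ for $n\geq 1$, since $K(n)\otimes\MOP\simeq 0$. For the same reason the $\CP^\infty_{\R}$ case is not identical: there $\eta$ is already nilpotent in $\Sigma^\infty_+\RP^\infty$, because $\Phi^{C_2}\KU_{\R}\simeq 0$.

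The missing idea, as in the paper, is a degree-two counting argument. The relevant Hurewicz target is $H_2(\BO;\Z)\cong\Z/2$. Equivalently, it is the summand $\MU_2(\Sigma^\infty\BO)\cong H_2(\BO;\Z)$ of $\MU_2(\Sigma^\infty_+\BO)\cong\Z\oplus\Z/2$; the $\Z$-summand only sees the augmentation, which lies in the torsion group $\pi_2\S$.

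Both $\eta^2$ and $\Phi^{C_2}\Nm_e^{C_2}(\beta)$ are non-nilpotent. The first is because $\Sigma^\infty_+\BO[\eta^{-2}]\simeq\MOP\neq 0$. The second is because, after $\Phi^{C_2}$, your Step 1 map gives a ring map from its localization to $\MOP$. Nilpotence detection (May nilpotence as in the paper, or DHS via the identification above) then forces both Hurewicz images to be the nonzero element of $\Z/2$. Their sum therefore has zero image, and $\Phi^{C_2}\delta$ is nilpotent.

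In the $\CP^\infty_{\R}$ case, $\pi_2\Sigma^\infty_+\RP^\infty$ is torsion while $\MU_2(\Sigma^\infty_+\RP^\infty)\cong\Z$. So $\Phi^{C_2}\Nm_e^{C_2}(\beta)$ is itself nilpotent, consistent with $\Phi^{C_2}\KU_{\R}\simeq 0$.

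Alternatively, your annihilator idea can be rescued in mod $2$ homology, where $\eta$ \emph{is} a non-zero-divisor. There $H_*(\BO;\F_2)=\F_2[b_1,b_2,\ldots]$ with $\eta\mapsto b_1$, so $\eta^k\,\Phi^{C_2}\delta=0$ kills the mod $2$ Hurewicz image of $\Phi^{C_2}\delta$. Since $H_2(\BO;\Z)\cong\Z/2\to H_2(\BO;\F_2)$ is injective, the integral Hurewicz image vanishes too, and nilpotence follows as before.
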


\begin{proof}
    There are maps
    \[ \Sigma_+^{\infty} \CP^{\infty}_{\R}[\Nm_e^{C_2}(\beta)^{-1}] \longrightarrow \Sigma_+^{\infty} \CP_{\R}^{\infty}[\beta_{\R}^{-1}] \quad \text{and} \quad \Sigma_+^{\infty} \BUR[\Nm_e^{C_2}(\beta)^{-1}] \longrightarrow \Sigma_+^{\infty} \BUR[\beta_{\R}^{-1}] \]
    of $\E_{\infty}$-algebras, and it suffices to show that these are equivalences. This then shows in particular, that the sources obtain $\E_{\infty}^{C_2}$-algebra structures by Hill--Hopkins \cite{hillHopkins2014equivariantmultiplicativeclosure}. We do this by checking it on geometric fixed points. Since $\Res_e^{C_2}\Nm_e^{C_2}(\beta) = \beta^2$ by \cref{lemma: res n res}, the map is evidently an equivalence after applying $\Phi^e$. So let us discuss $\Phi^{C_2}$.

    Let us start with the $\CP_{\R}^{\infty}$-version. First, we observe $\Phi^{C_2} \Sigma_+^{\infty}\CP_{\R}^{\infty}[\beta_{\R}^{-1}] \simeq \Phi^{C_2} \KU_{\R} \simeq 0$ by the Real Snaith theorem (\cref{theorem: Erho Real Snaith}). We need to compare it to $\Sigma_+^{\infty}\RP^{\infty}[(r\beta)^{-1}]$ where $r\beta = \Phi^{C_2}\Nm_e^{C_2}(\beta)$ is the element 
    \[ r\beta \colon \S^2 \simeq \Phi^{C_2} N_e^{C_2} \S^2 \xrightarrow{\ \beta \ } \Phi^{C_2} N_e^{C_2} \Sigma_+^{\infty}\CP^{\infty} \xrightarrow{\ r \ } \Phi^{C_2} \Sigma_+^{\infty}\CP^{\infty}_{\R} \simeq \Sigma_+^{\infty}\RP^{\infty}. \]
    In other words, we need to show that $r\beta \in \pi_2(\Sigma_+^{\infty} \RP^{\infty})$ is nilpotent. We achieve this with the Devinatz--Hopkins--Smith nilpotence theorem \cite{devinatzHopkinsSmith1988nilpotence1}.\footnote{One could also have used the May nilpotence theorem.} Indeed, $\pi_2(\Sigma_+^{\infty} \RP^{\infty})$ is torsion since $H_2(\Sigma_+^{\infty}\RP^{\infty};\mathbb Q) = 0$, but $\MU_2(\Sigma_+^{\infty} \RP^{\infty}) \cong \Z$.\footnote{The map $\MU \to \Z$ is $2$-connective, so $\fib(\MU \to \Z) \otimes \Sigma^{\infty} \RP^{\infty}$ is $3$-connective. By the associated LES the map $\MU_2(\Sigma^{\infty}\RP^{\infty}) \to H_2(\Sigma^{\infty}\RP^{\infty}) \cong 0$ is an isomorphism.} This is torsionfree. So $r\beta$ lies in the kernel of the Hurewicz map of $\MU$ and thus is nilpotent by the nilpotence theorem.

    Now onto the $\BU_{\R}$-version. We consider the two elements $r\beta = \Phi^{C_2} \Nm_e^{C_2}(\beta)$ and $\eta^2 = \Phi^{C_2} \beta_{\R}^2$ defined as follows:\footnote{In particular, we abuse notation and use the notation $r\beta$ again as in the $\CP_{\R}^{\infty}$-version.}
    \begin{align*}
        r\beta &\colon \S^2 \simeq \Phi^{C_2} N_e^{C_2} \S^2 \xrightarrow{\ \beta\ } \Phi^{C_2} N_e^{C_2} \Sigma_+^{\infty}\BU_{\R} \xrightarrow{\ r \ } \Phi^{C_2} \Sigma_+^{\infty}\BU_{\R} \simeq \Sigma_+^{\infty}\BO,
        \\ \eta^2 &\colon \S^2 \simeq \Phi^{C_2} \S^{2\rho} \xrightarrow{\ \beta^2 \ } \Phi^{C_2} \Sigma_+^{\infty} \BU_{\R} \simeq \Sigma_+^{\infty} \BO.
    \end{align*}
    Thus, we need to show that the map of $\E_{\infty}$-ring spectra
    \[ \Sigma_+^{\infty} \BO[(r\beta)^{-1}] \longrightarrow \Sigma_+^{\infty}\BO[\eta^{-2}] \] 
    is an equivalence. By \cref{lemma: localization and nilpotent}, it suffices to prove that $r\beta - \eta^2$ is nilpotent. We wish to employ May nilpotence \cite{MNNMayNilpotence}. So consider the Hurewicz homomorphism
    \[ h \colon \pi_2(\Sigma_+^{\infty}\BO) \longrightarrow H_2(\BO; \Z). \]
    A Serre spectral sequence computation combined with the universal coefficient theorem shows that there is an isomorphism $H_2(\BO;\Z) \cong \Z/2$. 
    Since $\Sigma_+^{\infty}\BO[\eta^{-2}] \simeq \MOP \neq 0$ by \cref{corollary: real Snaith}, the element $\eta^2$ cannot be nilpotent, so $h(\eta^2) = 1$ by May nilpotence. On the other hand, we have a map of $\E_{\infty}$-ring spectra $\Sigma_+^{\infty}\BO[(r\beta)^{-1}] \to \Sigma_+^{\infty}\BO[\eta^{-2}]$ with non-zero target, so the source must also be non-zero. Hence, $r\beta$ is not nilpotent, whence $h(r\beta) = 1$ by May nilpotence. By linearity, we conclude 
    \[ h(r\beta - \eta^2) = h(r\beta) - h(\eta^2) = 1 - 1 = 0, \] 
    so $r\beta - \eta^2 \in \pi_*(\Sigma_+^{\infty}\BO)$ is nilpotent by May nilpotence, as desired.
\end{proof}

Combining this with the Real Snaith theorems (\cref{theorem: Erho Real Snaith}) immediately yields:

\begin{theorem} \label{corollary: norm inverted snaith}
    There are equivalences
    \[ \KU_{\R} \simeq \Sigma_+^{\infty} \CP^{\infty}_{\R}[\Nm_e^{C_2}(\beta)^{-1}]  \quad \text{and} \quad \MUP_{\R} \simeq \Sigma_+^{\infty} \BUR[\Nm_e^{C_2}(\beta)^{-1}] \]
    of $\E_{\infty}^{C_2}$- resp.~$\E_{\rho}$-algebras.
\end{theorem}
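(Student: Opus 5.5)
The statement follows by composing the two equivalences already established. For $\KU_{\R}$, the preceding lemma (\cref{theorem: Nm localization snaith constructions}) gives an equivalence of $\E_{\infty}^{C_2}$-algebras
\[ \Sigma_+^{\infty} \CP^{\infty}_{\R}[\Nm_e^{C_2}(\beta)^{-1}] \simeq \Sigma_+^{\infty} \CP_{\R}^{\infty}[\beta_{\R}^{-1}]. \]
\cref{theorem: Erho Real Snaith} identifies the right-hand side with $\KU_{\R}$ as $\E_{\infty}^{C_2}$-algebras. Composing the two produces the first equivalence, and it lives in the $\E_{\infty}^{C_2}$-world because both factors do.

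For $\MUP_{\R}$ the argument is the same. \cref{theorem: Nm localization snaith constructions} gives an $\E_{\infty}^{C_2}$-equivalence
\[ \Sigma_+^{\infty} \BUR[\Nm_e^{C_2}(\beta)^{-1}] \simeq \Sigma_+^{\infty} \BUR[\beta_{\R}^{-1}]. \]
We restrict it along the map of $C_2$-operads $\E_{\rho} \to \E_{\infty}^{C_2}$ to get an equivalence of $\E_{\rho}$-algebras. We then compose with the inverse of the $\E_{\rho}$-equivalence $\MUP_{\R} \simeq \Sigma_+^{\infty} \BUR[\beta_{\R}^{-1}]$ from \cref{theorem: Erho Real Snaith}.

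One point needs checking: the $\E_{\infty}^{C_2}$-structure on $\Sigma_+^{\infty}\BUR[\beta_{\R}^{-1}]$ used in the lemma must be the one whose $\E_{\rho}$-restriction is the target of the Real Snaith map. Both structures come from the Hill--Hopkins multiplicative localization of $\Sigma_+^{\infty}\BUR$ in \cref{prop: localizations are commutative}, which is canonical, so they agree. The same argument applied to the norm-inverted version shows that its structure is the one induced from the $\E_{\infty}^{C_2}$-map out of $\Sigma_+^{\infty}\BUR$ by the universal property. This bookkeeping is the only subtle step; the substantive inputs, namely the nilpotence arguments on geometric fixed points, are already contained in \cref{theorem: Nm localization snaith constructions}.
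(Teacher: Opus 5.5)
Your proposal is correct and matches the paper, which obtains this theorem immediately by composing the $\E_{\infty}^{C_2}$-equivalences of \cref{theorem: Nm localization snaith constructions} with the Real Snaith equivalences of \cref{theorem: Erho Real Snaith}, restricting to $\E_{\rho}$ in the $\MUP_{\R}$ case. Your extra check that the two $\E_{\infty}^{C_2}$-structures on $\Sigma_+^{\infty}\BUR[\beta_{\R}^{-1}]$ agree, via uniqueness of the Hill--Hopkins localization structure, is sound bookkeeping that the paper leaves implicit.
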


Taking $(-)^{C_2}$ and applying tom Dieck splitting as in \cref{corollary: real Snaith} yields a norm inverted version of the real Snaith theorem for $\KO$:

\begin{corollary}
    There is an equivalence $\KO \simeq \Sigma_+^{\infty} \CP_{\R}^{\infty}[\Nm_e^{C_2}(\beta)^{-1}]_{hC_2}$ of $\E_{\infty}$-algebras.
\end{corollary}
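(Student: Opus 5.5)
The plan is to deduce this from \cref{corollary: norm inverted snaith} by taking genuine $C_2$-fixed points and using the tom Dieck splitting, as in \cref{corollary: real Snaith}. The new point is that the geometric fixed point summand has to vanish for the norm-inverted localization as well.

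\textbf{Step 1: fixed points of the equivalence.} By \cref{corollary: norm inverted snaith}, there is an equivalence $\KU_{\R} \simeq \Sigma_+^{\infty}\CP^{\infty}_{\R}[\Nm_e^{C_2}(\beta)^{-1}]$ of $\E_{\infty}^{C_2}$-algebras. Applying $(-)^{C_2}$, which is lax symmetric monoidal, gives an equivalence of $\E_\infty$-rings
\[ \KO \simeq \bigl(\Sigma_+^{\infty}\CP^{\infty}_{\R}[\Nm_e^{C_2}(\beta)^{-1}]\bigr)^{C_2}. \]

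\textbf{Step 2: tom Dieck splitting of the localization.} Localization at $\Nm_e^{C_2}(\beta)$ is a filtered colimit of multiplications by $\Nm_e^{C_2}(\beta)$ on $\Sigma_+^\infty\CP^\infty_{\R}$. Both $(-)^{C_2}$ and the tom Dieck splitting commute with filtered colimits. The multiplication maps respect the isotropy separation sequence
\[ X_{hC_2} \to X^{C_2} \to \Phi^{C_2}X. \]
Hence the right-hand side of Step 1 is an extension of $\Phi^{C_2}$ of the localization by its homotopy orbits. On the geometric fixed point side, the element acts as $r\beta$, so that piece is $\Sigma_+^{\infty}\RP^{\infty}[(r\beta)^{-1}]$. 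In the proof of \cref{theorem: Nm localization snaith constructions} we showed that $r\beta$ is nilpotent, so this summand vanishes. Alternatively, the equivalence with $\KU_{\R}$ gives vanishing directly, since $\Phi^{C_2}\KU_{\R}\simeq 0$. In either case the homotopy orbit map is an equivalence:
\[ \bigl(\Sigma_+^{\infty}\CP^{\infty}_{\R}[\Nm_e^{C_2}(\beta)^{-1}]\bigr)_{hC_2} \xrightarrow{\ \simeq\ } \bigl(\Sigma_+^{\infty}\CP^{\infty}_{\R}[\Nm_e^{C_2}(\beta)^{-1}]\bigr)^{C_2}. \]

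\textbf{Step 3: the multiplicative structure.} This is the main subtlety, since homotopy orbits are not multiplicative in general. Here, however, the norm map $X_{hC_2}\to X^{C_2}$ is an equivalence because the Tate construction $X^{tC_2}$ vanishes. For a Borel spectrum, $X^{tC_2}$ is the geometric fixed points, which are zero by Step 2. So the homotopy orbits are identified with the $\E_\infty$-ring $X^{C_2}\simeq X^{hC_2}$, and this identification provides the $\E_\infty$-structure in the statement. I would phrase the result as: the equivalence of Step 1, composed with the norm equivalence, is an equivalence of $\E_\infty$-algebras, where the target carries the structure transported from $X^{C_2}$. It is worth checking that the transported structure is the intended one. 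This should hold because the projection $X_{hC_2}\simeq X^{C_2}$ is the structure map of the fixed point ring once the Tate construction vanishes.
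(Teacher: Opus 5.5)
Your proposal is correct and follows the paper's route: you apply $(-)^{C_2}$ to \cref{corollary: norm inverted snaith}, use tom Dieck splitting (really just the isotropy separation sequence), and kill the geometric fixed point piece $\Sigma_+^{\infty}\RP^{\infty}[(r\beta)^{-1}]$, either by the nilpotence of $r\beta$ from \cref{theorem: Nm localization snaith constructions} or directly from $\Phi^{C_2}\KU_{\R}\simeq 0$. Your Step 3 spells out what the paper leaves implicit: the $\E_{\infty}$-structure on the homotopy orbits is the one transported along $X_{hC_2}\simeq X^{C_2}$. That equivalence already follows from $\Phi^{C_2}X\simeq 0$ in Step 2, so you do not need the Tate construction or to call this map the norm map.
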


\section{Computations in Real topological Hochschild homology}
\label{section: computations in THR}

\subsection{Structured Real Bökstedt equivalence \& Inverting elements in THR}
We will now give applications to $\THR$-computations. Recall that for an $\E_{\sigma}$-algebra $R$ one can define $\THR(R) \coloneqq R \otimes_{N_e^{C_2} R^e} R$, which for $\E_{\infty}^{C_2}$-algebras $R$ can be described as $\THR(R) \simeq R \otimes S^{\sigma}$, see \cite{dottoMoiPatchkoriaReeh2021THR} and \cite[Remark 5.3]{quigleyShah2022equivalencetheoriesrealcyclotomic}.

Our first $\THR$-computation concerns Real Bökstedt periodicity. Bayındır--Moulinos showed that Bökstedt periodicity $\THH(\F_p) \simeq \F_p \otimes \Sigma_+^{\infty}\Omega S^3$ refines to an $\E_2$-equivalence \cite[Theorem 1.3]{bayindirMoulinos2022KTHHFp}, and we lift it to an $\E_{\rho}$-equivalence. We include it here to demonstrate that our strategy to prove the Real Snaith theorems is versatile enough to be applicable in multiple settings.

\begin{theorem} \label{theorem: structured Real Boekstedt}
    There is an equivalence $\THR(\ul{\F}_p) \simeq \ul{\F}_p \otimes \Sigma_+^{\infty}\Omega S^{1+\rho}$ of $\E_{\rho}$-algebras.
\end{theorem}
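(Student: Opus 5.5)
We follow the strategy of \cref{theorem: Erho Real Snaith}: show that both sides are strongly even $C_2$-spectra, and then lift the non-equivariant $\E_2$-equivalence of Bayındır--Moulinos to an $\E_{\rho}$-map. Equivalences between strongly even $C_2$-spectra are detected on underlying spectra \cite[Lemma 3.4]{hillmeier2017}, so the lift is automatically an equivalence. The underlying spectra are $\Res_e^{C_2}\THR(\ul{\F}_p) \simeq \THH(\F_p)$ and $\Res_e^{C_2}(\ul{\F}_p \otimes \Sigma_+^{\infty}\Omega S^{1+\rho}) \simeq \F_p \otimes \Sigma_+^{\infty}\Omega S^3$.

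First, we identify the target as a $C_2$-Thom spectrum. The space $\Omega S^{1+\rho}$ is a $\rho$-loop space, namely $\Omega^{\rho} S^{1+2\rho}$ up to a shift of indexing; more precisely $\Omega S^{1+\rho} \simeq \Omega^{\rho}\B^{\rho}\Omega S^{1+\rho}$. The Real analog of the Hopkins--Mahowald theorem (Behrens--Wilson, Hahn--Wilson) says that $\ul{\F}_2$ is the $C_2$-Thom spectrum of an $\E_{\rho}$-map $\Omega^{\rho} S^{\rho+1} \to \ul{\Pic}(\S_{C_2})$; for odd $p$ one uses the trivial twist. In the odd case we take $R = \ul{\F}_p$ and $f \colon \Omega S^{1+\rho} \to \ul{\Pic}_{C_2}(\ul{\F}_p)$ the trivial map of $\rho$-loop spaces, so that $\Th_{C_2}(f) \simeq \ul{\F}_p \otimes \Sigma_+^{\infty}\Omega S^{1+\rho}$ as an $\E_{\rho}$-$\ul{\F}_p$-algebra. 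We then apply \cref{theorem: chili thom lifting} with $n=1$ and $E = \THR(\ul{\F}_p)$. Since $\ul{\F}_p$ is $\E_{\infty}^{C_2}$, the description $\THR(\ul{\F}_p) \simeq \ul{\F}_p \otimes S^{\sigma}$ shows that $E$ is an $\E_{\infty}^{C_2}$-$\ul{\F}_p$-algebra.

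We then verify the three hypotheses.

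\begin{enumerate}[(a)]
\item \emph{Strong evenness of $E$.} The homotopy of $\THR(\ul{\F}_p)$ is known from Dotto--Moi--Patchkoria--Reeh: it is $\ul{\F}_p$ tensored with a polynomial algebra on a class in degree $\rho$. We check Greenlees' gap criterion \cite[Lemma 1.2]{greenleesfour} together with the vanishing of $\pi_{*\rho-1}$, using that $\ul{\F}_p\otimes S^{n\rho}$ has the right gap. For odd $p$ this also follows by Borel-completeness arguments.
\item \emph{Hypothesis (i), an $\E_{\rho}$-$R$-algebra map $\Th_{C_2}(f) \to E$.} For trivial $f$ this is just the unit $\ul{\F}_p \to E$, since $\Th_{C_2}(f) = R \otimes \Sigma^{\infty}_+X$ and we only need an algebra map out of $R$. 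Note that the lifting theorem's orientation hypothesis refers to existence only, not to the specific map.
\item \emph{Hypothesis (ii).} We need $\uHZ \otimes \Sigma^{\infty}\B^{\rho}\Omega S^{1+\rho} \simeq \uHZ \otimes \Sigma^\infty S^{1+\rho}$ to split as a sum of $\rho$-spheres. This is the main obstacle: $S^{1+\rho}$ is not a sphere of the form $S^{n\rho}$. The fix is to use instead the description $\Omega S^{1+\rho} \simeq \Omega^{\rho} S^{2\rho}$ as a $\rho$-loop space, via the Real James/EHP identification $S^{1+\rho} \simeq \Omega^{\sigma}S^{2\rho}$ on the relevant connective cover. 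With this, $\B^{\rho}X = S^{2\rho}$, whose homology is $\uHZ \oplus \Sigma^{2\rho}\uHZ$. We expect this identification to be the delicate step; if it fails, we would instead invoke \cite[Theorem D]{quinnZhu2026multiplicativeequivariantthomspectra}-style cell arguments directly on $\Omega S^{1+\rho}$, whose $\uHZ$-homology is free on cells $S^{n\rho}$ by the Real James splitting.
\end{enumerate}

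With the hypotheses checked, \cref{theorem: chili thom lifting} lifts the Bayındır--Moulinos $\E_2$-equivalence $\F_p \otimes \Sigma_+^{\infty}\Omega S^3 \to \THH(\F_p)$ uniquely to an $\E_{\rho}$-map. The target side of the lift is strongly even by (a). The source side $\ul{\F}_p \otimes \Sigma_+^{\infty}\Omega S^{1+\rho}$ is strongly even by its free $\rho$-cell homology (as in (c)) and the strong evenness of $\ul{\F}_p\otimes S^{n\rho}$. Hence the lifted map is an equivalence.
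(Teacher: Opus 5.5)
Your overall plan matches the paper's. Both sides are strongly even, so it suffices to lift the Bay\i nd\i r--Moulinos $\E_2$-map. Applying \cref{theorem: chili thom lifting} with $R=\ul{\F}_p$ and trivial twist is essentially a repackaging of the paper's adjunction argument combined with \cite[Theorem 5.3.8]{quinnZhu2026multiplicativeequivariantthomspectra}. That argument works verbatim for $p=2$ too. The Real Hopkins--Mahowald detour concerns $\Omega^{\rho}S^{\rho+1}$, whose underlying space is $\Omega^2S^3$, and plays no role.

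The genuine gap is in step (c), which is exactly where the geometric input sits. You never produce a correct $\rho$-fold delooping of $\Omega S^{1+\rho}$, and each identification you propose is already false on underlying spaces:
\begin{itemize}
\item $\Omega S^{1+\rho}\simeq\Omega^{\rho}S^{1+2\rho}$ would give $\Omega S^3\simeq\Omega^2S^5$. But $\pi_2(\Omega S^3)\cong\Z$, while $\Omega^2S^5$ is $2$-connected.
\item $\B^{\rho}\Omega S^{1+\rho}\simeq S^{1+\rho}$ confuses $\B^{\rho}$ with $\B^{1}$.
\item $S^{1+\rho}\simeq\Omega^{\sigma}S^{2\rho}$ would give $S^3\simeq\Omega S^4$. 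But $H_6(\Omega S^4;\Z)\cong\Z$, while $H_6(S^3;\Z)=0$.
\end{itemize}
So $\B^{\rho}\Omega S^{1+\rho}$ is not $S^{2\rho}$. Without a correct delooping you have no $\E_{\rho}$-structure on $\ul{\F}_p\otimes\Sigma^{\infty}_+\Omega S^{1+\rho}$. This structure is needed even to state the theorem, since $\Omega S^{1+\rho}$ is a priori only an $\E_1$-space. You also cannot verify hypothesis (ii). Your fallback does not close the gap either. \cite[Theorem D]{quinnZhu2026multiplicativeequivariantthomspectra} only lifts maps of $C_2$-spectra, so it recovers at best the unstructured Dotto--Moi--Patchkoria--Reeh equivalence. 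Moreover, hypothesis (ii) concerns the $\rho$-fold delooping $\B^{\rho}X$, not the cells of $X$.

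The missing ingredient is the Real analog of $S^3\simeq\mathrm{Sp}(1)\simeq\Omega\HP^{\infty}$. There is an equivalence $S^{1+\rho}\simeq\Omega^{\sigma}\HP^{\infty}_{\R}$, where $C_2$ acts on $\HP^{\infty}$ by conjugation by $i$ \cite[Proposition 4.2]{hahnRealOrientationsLubin2020}. Hence $\Omega S^{1+\rho}\simeq\Omega^{\rho}\HP^{\infty}_{\R}$ is an $\E_{\rho}$-space with $\B^{\rho}\Omega S^{1+\rho}\simeq\HP^{\infty}_{\R}$. The space $\HP^{\infty}_{\R}$ has a $C_2$-cell structure with a single $2n\rho$-cell for each $n\geq 0$, so $\uHZ\otimes\Sigma^{\infty}_+\HP^{\infty}_{\R}\simeq\bigoplus_{n\geq 0}\Sigma^{2n\rho}\uHZ$, with each degree appearing once. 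Substitute this for (c). In (b), take the orientation to be the augmentation $\ul{\F}_p\otimes\Sigma^{\infty}_+X\to\ul{\F}_p$ followed by the unit, rather than ``just the unit''. With these two changes your argument goes through and agrees with the paper's.
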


\begin{proof}
    It is known that there is such an equivalence of $C_2$-spectra \cite[Theorem 5.18]{dottoMoiPatchkoriaReeh2021THR}, which in particular implies that it is strongly even. Since $\ul{\F}_p$ is naturally $\E_{\infty}^{C_2}$, it induces an $ \ul{\F}_p$-$\E_{\infty}^{C_2}$-algebra structure on $\THR(\ul{\F}_p)$, see \cref{observation: algebra structure on THR}. Moreover, $S^{1+\rho}$ admits an $\E_{\sigma}$-algebra structure, since there is an equivalence $S^{1+\rho} \simeq \Omega^{\sigma} \HP^{\infty}_{\R}$ by \cite[Proposition 4.2]{hahnRealOrientationsLubin2020},\footnote{The $C_2$-action on $\HP^{\infty}$ is given by conjugation by $i$.} so the other side of the equivalence admits an $ \ul{\F}_p$-$\E_{\rho}$-algebra structure as well.
    
    Now, $\ul{\F}_p$-$\E_{\rho}$-maps $\ul{\F}_p \otimes \Sigma_+^{\infty} \Omega S^{1+\rho} \to \THR(\ul{\F}_p)$ correspond to maps 
    \[ \Sigma^{\infty} \HP_{\R}^{\infty} \simeq \Sigma^{\infty} \mathrm{B}^{\sigma} S^{1+\rho} \longrightarrow \gl_1 \THR(\ul{\F}_p) \] 
    of $C_2$-spectra by an adjunction argument, see e.g.~\cite[Proposition 5.4.1]{quinnZhu2026multiplicativeequivariantthomspectra}. On underlying, there is a map which corresponds to an $\E_2$-equivalence $\THH(\F_p) \simeq  \F_p \otimes \Sigma_+^{\infty}\Omega S^3$ by Bayındır--Moulinos \cite[Theorem 1.3]{bayindirMoulinos2022KTHHFp}. It suffices to lift this map since equivalences of strongly even spectra can be checked underlying \cite[Lemma 3.4]{hillmeier2017}. To do this, we want to apply \cite[Theorem 5.3.8]{quinnZhu2026multiplicativeequivariantthomspectra}, for which the only missing assumption is that $\Sigma^{\infty} \HP^{\infty}_{\R}$ has finite type $\rho$-free homology. 
    
    To see this, we consider a $C_2$-cell decomposition of $\HP^{\infty}_{\R}$ suggested by Mike Hopkins and spelled out by Hahn--Shi \cite[Proposition 4.2]{hahnRealOrientationsLubin2020} -- namely one with a single $2n\rho$-cell for each $n \geq 0$. Thus, $\Sigma_+^{\infty} \HP^{\infty}_{\R}$ has finite type $\rho$-free homology analogously to \cite[Section 4.3.1]{hill2022freeness}, see e.g.~\cite[Example 5.2]{pitschRickaScherer2021conjugationspaces}. 
\end{proof}

Next, we discuss the interaction of $\THR$ with inversions. With it we set up the main ingredient in computing $\THR(\KU_{\R})$ and $\THR(\MUPR)$ using the Real Snaith theorems (\cref{theorem: Erho Real Snaith}), see \cref{theorem: THR KUR}.

\begin{observation} \label{observation: algebra structure on THR}
    Let $R$ be an $\E_{\infty}^{C_2}$-ring spectrum. Then, the counit $N_e^{C_2} \Res_e^{C_2} R \to R$ is an $\E_{\infty}^{C_2}$-algebra map. In particular, the map
    \[ R \simeq N_e^{C_2} \Res_e^{C_2} R \otimes_{N_e^{C_2} \Res_e^{C_2} R} R \longrightarrow R \otimes_{N_e^{C_2} \Res_e^{C_2} R} R \simeq \THR(R) \]
    is an $\E_{\infty}^{C_2}$-algebra map, thus endowing $\THR(R)$ with an $R$-module structure. This gives sense to invert homotopy group elements from $R$ in $\THR(R)$ in an $R$-module fashion.
\end{observation}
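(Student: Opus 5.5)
The plan is to phrase everything in terms of the adjunction between normed and non-equivariant commutative algebras, and then to realize $\THR(R)$ as a pushout of $\E_{\infty}^{C_2}$-algebras.

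\textbf{Step 1: the counit.} The restriction functor $\Res_e^{C_2} \colon \Alg_{\E_{\infty}^{C_2}}(\Sp^{C_2}) \to \Alg_{\E_{\infty}}(\Sp)$ has as left adjoint the Hill--Hopkins--Ravenel norm $N_e^{C_2}$. This is the standard fact that in normed ($C_2$-$\E_{\infty}$) algebras the indexed coproduct over $C_2/e$ is computed by the norm. The map $N_e^{C_2}\Res_e^{C_2} R \to R$ is then the counit of this adjunction. Being a counit, it is a morphism in $\Alg_{\E_{\infty}^{C_2}}(\Sp^{C_2})$, which proves the first assertion. I would also record that the underlying map of $C_2$-spectra is the norm multiplication used in \cref{notation: different norms}. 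This matters because it shows that the bimodule structure used to define $\THR(R)$ is exactly the one restricted along this $\E_{\infty}^{C_2}$-map.

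\textbf{Step 2: $\THR(R)$ as a pushout.} Write $N \coloneqq N_e^{C_2}\Res_e^{C_2} R$. In $\Alg_{\E_{\infty}^{C_2}}(\Sp^{C_2})$, the pushout of a span $A \leftarrow N \to B$ is computed on underlying $C_2$-spectra by the relative tensor product $A \otimes_N B$. This holds because the forgetful functor preserves sifted colimits, so the pushout is computed by the bar construction $\mathrm{Bar}(A,N,B)$ formed in algebras. Consequently $\THR(R) = R \otimes_N R$ acquires its $\E_{\infty}^{C_2}$-structure as the pushout of $R \leftarrow N \to R$ along two copies of the counit. The map in the statement is the induced map of pushouts, coming from the map of spans $(N \xleftarrow{\id} N \to R) \to (R \leftarrow N \to R)$ that is the counit on the left leg and the identity elsewhere. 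It is therefore a map of $\E_{\infty}^{C_2}$-algebras. The source is identified with $R$ because the pushout along an identity leg is trivial, which is the equivalence $N \otimes_N R \simeq R$.

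\textbf{Step 3: the module structure and inversion.} Any $\E_{\infty}^{C_2}$-map $R \to T$ makes $T$ an $R$-module, indeed an $R$-algebra. So for $\ol{x} \in \pi^{C_2}_V(R)$ one can set $\THR(R)[\ol{x}^{-1}] \coloneqq \THR(R) \otimes_R R[\ol{x}^{-1}]$, or equivalently take the telescope along multiplication by $\ol{x}$. When $\ol{x}$ generates a norm-closed multiplicative set, as for $\Nm_e^{C_2}$-classes, Hill--Hopkins \cite{hillHopkins2014equivariantmultiplicativeclosure} shows that this localization is again $\E_{\infty}^{C_2}$.

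\textbf{Main obstacle.} The step that needs care is Step 2. One must check that the homotopy-invariant definition $\THR(R) = R \otimes_{N_e^{C_2}\Res_e^{C_2}R} R$ agrees with the algebra pushout, and in particular with the identification $\THR(R) \simeq R \otimes S^{\sigma}$ cited from \cite{dottoMoiPatchkoriaReeh2021THR, quigleyShah2022equivalencetheoriesrealcyclotomic}. I would argue this via the cofiber sequence $C_{2+} \to S^0 \to S^{\sigma}$, which presents $S^{\sigma}$ as a pushout of $C_2$-spaces. Tensoring $R$ with this pushout gives $R \otimes S^{\sigma} \simeq R \otimes_{R \otimes C_{2+}} R$, and $R \otimes C_{2+} \simeq N_e^{C_2}\Res_e^{C_2}R$ by the adjunction of Step 1. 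Since this tensor is functorial in $C_2$-spaces, the map in the statement is tensoring $R$ with the base-point inclusion $S^0 \to S^{\sigma}$.
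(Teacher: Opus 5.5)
Your proposal is correct and follows the same reasoning the paper packs into the observation itself. The map $N_e^{C_2}\Res_e^{C_2}R\to R$ is the counit of the norm--restriction adjunction on commutative algebras, and base change along it gives the $\E_\infty^{C_2}$-map $R\to\THR(R)$; this uses the pushout $R\otimes_{N}R$ in $\E_\infty^{C_2}$-algebras, where you should also note that binary coproducts there are tensor products. One small slip in your last paragraph: with unpointed tensoring, $R\otimes C_{2+}\simeq N_e^{C_2}\Res_e^{C_2}R\otimes R$, so it is cleaner to present $S^\sigma$ as the unreduced suspension $*\sqcup_{C_2}*$ of $C_2$-spaces, which gives $R\otimes S^\sigma\simeq R\otimes_{R\otimes C_2}R$ directly.
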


Both $N_e^{C_2}$ and $\Nm_e^{C_2}$ will show up in the proof of the following statement, so we refer the reader to \cref{notation: different norms} for a reminder of this notation. Our remark ensures that the objects in the statement are well-defined.

\begin{proposition} \label{prop: THR inversion}
    Let $R$ be an $\E_{\infty}^{C_2}$-ring spectrum and $V$ be a $C_2$-representation. Suppose that $\ol{x} \in \pi_V^{C_2}(R)$ and $x = \Res_e^{C_2}\ol{x} \in \pi_{|V|}^e(R)$. Then, there is an equivalence
    \[ \THR \left(R[(\Nm_e^{C_2}x)^{-1}] \right) \simeq \THR(R)[(\Nm_e^{C_2}x)^{-1}] \]
    of $\E_{\infty}^{C_2}$-algebras.
\end{proposition}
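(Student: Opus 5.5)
The plan is to use the relative tensor product description $\THR(R) \simeq R \otimes_{N_e^{C_2}\Res_e^{C_2}R} R$ and to keep track of how the localization at $s \coloneqq \Nm_e^{C_2}x$ passes through each of the three pieces. Write $R' \coloneqq R[s^{-1}]$. By Hill--Hopkins \cite[Proposition 4.9]{hillHopkins2014equivariantmultiplicativeclosure}, $R'$ is again an $\E_{\infty}^{C_2}$-ring. The reason is that the smallest norm-closed $C_2$-subset generated by $s$ has $C_2$-part consisting of powers of $s$ (norms of $s$ only produce further powers of norms of $x$). Its underlying part consists of powers of $\Res_e^{C_2}s = x\cdot\tau_*x$, and by \cref{lemma: res n res} this is $\pm x^2$. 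Hence $\Res_e^{C_2}R' \simeq \Res_e^{C_2}R[x^{-1}]$ as $\E_\infty$-rings, and $R \to R'$ is an $\E_{\infty}^{C_2}$-map exhibiting $R'$ as a smashing localization $R' \simeq R \otimes_R R'$.

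\textbf{Step 1: the norm piece.} Since $N_e^{C_2}$ is symmetric monoidal and preserves sifted colimits, writing $R[x^{-1}]$ as the telescope of $x$ gives
\[ N_e^{C_2}\Res_e^{C_2}R' \simeq N_e^{C_2}\Res_e^{C_2}R\,[(N_e^{C_2}x)^{-1}]. \]
Here the source is $N_e^{C_2}$ applied to $\Res_e^{C_2}R$ with $x$ inverted. The counit $N_e^{C_2}\Res_e^{C_2}R \to R$ sends $N_e^{C_2}x$ to $\Nm_e^{C_2}x = s$ by \cref{notation: different norms}. Hence $R'$ as an $N_e^{C_2}\Res_e^{C_2}R'$-module is just $R'$ as an $N_e^{C_2}\Res_e^{C_2}R$-module, because the inverted element already acts invertibly.

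\textbf{Step 2: rewrite the relative tensor product.} For a localization $A \to A[t^{-1}]$ with $t$ mapping to units in $M$ and $N$, we have
\[ M \otimes_{A[t^{-1}]} N \simeq M \otimes_A N. \]
This holds because $A[t^{-1}] \otimes_A A[t^{-1}] \simeq A[t^{-1}]$, so restriction of scalars along $A \to A[t^{-1}]$ is fully faithful. Apply this to $A = N_e^{C_2}\Res_e^{C_2}R$ and $t = N_e^{C_2}x$, together with the base change $R' \simeq R \otimes_R R'$ on both sides:
\[ \THR(R') \simeq R' \otimes_{A} R' \simeq (R\otimes_A R) \otimes_{R\otimes R} (R'\otimes R') \simeq \THR(R)[s^{-1}]. \]
The last equivalence uses that inverting $s$ on both tensor factors is the same as inverting it once. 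In $R\otimes_A R$, the left and right actions of $s$ agree up to a unit: both are images of $N_e^{C_2}x \in A$, which is central. So localizing twice is localizing once. I expect this bookkeeping, making "left $s$ equals right $s$" precise in $\THR(R)$, to be the main obstacle. The key input is exactly that $s$ comes from $A$ via the counit, which is why the hypothesis asks for a norm rather than an arbitrary element such as $\beta_{\R}$.

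\textbf{Step 3: multiplicativity.} To get the equivalence as $\E_{\infty}^{C_2}$-algebras, I would instead use $\THR(R) \simeq R \otimes S^{\sigma}$, the tensor with a $C_2$-space in $\E_{\infty}^{C_2}$-rings. Functoriality gives an $\E_{\infty}^{C_2}$-map $\THR(R) \to \THR(R')$. By Step 2, $s$ acts invertibly on the target. Hill--Hopkins' universal property then yields an $\E_{\infty}^{C_2}$-map $\THR(R)[s^{-1}] \to \THR(R')$, where the localization is formed along $R \to \THR(R)$ of \cref{observation: algebra structure on THR}. The computation in Step 2 shows that this map is an equivalence of underlying $C_2$-spectra.
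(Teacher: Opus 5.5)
Your proposal is correct and follows essentially the paper's route: identify $N_e^{C_2}\Res_e^{C_2}(R[s^{-1}])$ with $A[t^{-1}]$, where $A = N_e^{C_2}\Res_e^{C_2}R$ and $t = N_e^{C_2}x$, and then simplify the relative tensor product. The paper instead writes one copy of $R[s^{-1}]$ as $A[t^{-1}]\otimes_A R$ and cancels, landing directly in $R\otimes_A R[s^{-1}] \simeq \THR(R)[s^{-1}]$. This sidesteps your left-versus-right identification of $s$, which is in any case immediate because the two composites $A \to R \rightrightarrows R\otimes_A R$ agree by the pushout square. Your Step 3 additionally spells out the $\E_{\infty}^{C_2}$-compatibility of the equivalence, which the paper only asserts.
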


\begin{proof}
    Let us first note that both sides admit $\E_{\infty}^{C_2}$-algebra structures by Hill--Hopkins \cite{hillHopkins2014equivariantmultiplicativeclosure}, see also our argument in \cref{prop: localizations are commutative}.
    
    Now, we can compute 
    \begin{align*}
        \THR \left(R[(\Nm_e^{C_2}x)^{-1}] \right) &\simeq R[(\Nm_e^{C_2}x)^{-1}] \otimes_{N_e^{C_2} \Res_e^{C_2} (R[(\Nm_e^{C_2}x)^{-1}])} R[(\Nm_e^{C_2}x)^{-1}]
        \\ &\simeq R[(\Nm_e^{C_2}x)^{-1}] \otimes_{(N_e^{C_2} \Res_e^{C_2} R)[(N_e^{C_2}x)^{-1}]} R[(\Nm_e^{C_2}x)^{-1}].
    \end{align*}
    On the other hand, $R[(\Nm_e^{C_2}x)^{-1}] \simeq (N_e^{C_2} \Res_e^{C_2} R)[(N_e^{C_2}x)^{-1}] \otimes_{(N_e^{C_2} \Res_e^{C_2} R)} R$, so this term simplifies into
    \[ R[(\Nm_e^{C_2}x)^{-1}] \otimes_{(N_e^{C_2} \Res_e^{C_2} R)} R \simeq \THR(R)[(\Nm_e^{C_2}x)^{-1}], \]
    as desired.
\end{proof}

\begin{remark}
    The object 
    \[ (N_e^{C_2} \Res_e^{C_2} R)[(N_e^{C_2}x)^{-1}] \otimes_{(N_e^{C_2} \Res_e^{C_2} R)} R \simeq N_e^{C_2} \Res_e^{C_2} (R[(\Nm_e^{C_2}x)^{-1}]) \otimes_{(N_e^{C_2} \Res_e^{C_2} R)} R \] 
    appearing in the above proof is also known as $N_{R}^{e \to C_2} (R[(\Nm_e^{C_2}x)^{-1}])$, the norm in the $C_2$-symmetric monoidal $\infty$-category $\ul{\LMod}_R$, see e.g.~\cite[Example 3.3.14]{quinnZhu2026multiplicativeequivariantthomspectra}.
\end{remark}

\begin{corollary} \label{corollary: THR inversion strongly even}
    Let $R$ be a strongly even $\E_{\infty}^{C_2}$-ring spectrum and $n \in \Z$. Consider $\ol{x} \in \pi_{n \rho}^{C_2}(R)$. Then, there is an equivalence
    \[ \THR(R[\ol{x}^{-1}]) \simeq \THR(R)[\ol{x}^{-1}] \]
    of $\E_{\infty}^{C_2}$-ring spectra. 
\end{corollary}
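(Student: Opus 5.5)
The plan is to reduce to \cref{prop: THR inversion} by identifying the localization at $\ol{x}$ with the localization at $\Nm_e^{C_2}(\Res_e^{C_2}\ol{x})$. Set $x \coloneqq \Res_e^{C_2}\ol{x} \in \pi_{2n}^e(R)$, the representation here being $V = n\rho$. By \cref{corollary: Nm Res in strongly even}, strong evenness of $R$ gives
\[ \Nm_e^{C_2} x = (-1)^n \ol{x}^2 \in \pi_{2n\rho}^{C_2}(R). \]
Since $(-1)^n$ is a unit, inverting $\Nm_e^{C_2}x$ is the same as inverting $\ol{x}^2$, hence the same as inverting $\ol{x}$. So $R[\ol{x}^{-1}] \simeq R[(\Nm_e^{C_2}x)^{-1}]$ as $R$-modules. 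As $\E_{\infty}^{C_2}$-algebras, I would use the Hill--Hopkins description as in \cref{prop: localizations are commutative}. The smallest norm-closed $C_2$-subset containing $\ol{x}$ consists of products of powers of $\ol{x}$ and $\Nm_e^{C_2}x$ at level $C_2$, and powers of $x$ at the underlying level. All of these are already invertible once $\ol{x}$ is inverted. Thus the module localization $R[\ol{x}^{-1}]$ carries an $\E_{\infty}^{C_2}$-structure and agrees with $R[(\Nm_e^{C_2}x)^{-1}]$ as an $\E_{\infty}^{C_2}$-algebra, by the universal property in \cite[Proposition 4.9]{hillHopkins2014equivariantmultiplicativeclosure}.

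Next I apply \cref{prop: THR inversion} with $V = n\rho$. This gives
\[ \THR(R[\ol{x}^{-1}]) \simeq \THR(R[(\Nm_e^{C_2}x)^{-1}]) \simeq \THR(R)[(\Nm_e^{C_2}x)^{-1}] \]
as $\E_{\infty}^{C_2}$-algebras. Finally, $\THR(R)$ is an $R$-algebra via \cref{observation: algebra structure on THR}, and the same module-level identity $\Nm_e^{C_2}x = (-1)^n\ol{x}^2$ holds after mapping to $\THR(R)$. The same argument as in the first step therefore identifies $\THR(R)[(\Nm_e^{C_2}x)^{-1}] \simeq \THR(R)[\ol{x}^{-1}]$ as $\E_{\infty}^{C_2}$-algebras.

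The main obstacle is bookkeeping rather than a hard computation. One must check that the equality from \cref{corollary: Nm Res in strongly even}, which is stated in $\pi_\star^{C_2}(R)$, transfers along the $\E_{\infty}^{C_2}$-map $R \to \THR(R)$. This holds by naturality of norms along $\E_{\infty}^{C_2}$-maps. Note that $\THR(R)$ itself need not be strongly even, which is why the identity has to be pushed forward from $R$ rather than recomputed in $\THR(R)$.

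One must also check that inverting $\ol{x}$ in $\THR(R)$ as an $R$-module gives an $\E_{\infty}^{C_2}$-algebra. This follows by repeating the multiplicative-closure argument: the norm-closure of $\ol{x}$ is generated by $\ol{x}$ and $\Nm_e^{C_2}x = \pm\ol{x}^2$.
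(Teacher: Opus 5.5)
Your proposal is correct and follows essentially the same route as the paper. You use \cref{corollary: Nm Res in strongly even} to get $\Nm_e^{C_2}\Res_e^{C_2}\ol{x} = (-1)^n\ol{x}^2$, so inverting the norm is the same as inverting $\ol{x}$, then apply \cref{prop: THR inversion} with $V = n\rho$, and obtain the $\E_{\infty}^{C_2}$-structures from the Hill--Hopkins multiplicative-closure argument of \cref{prop: localizations are commutative}. You also spell out that the identity must be transported along $R \to \THR(R)$ rather than recomputed in $\THR(R)$, which need not be strongly even; the paper leaves this implicit, and you handle it correctly.
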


\begin{proof}
    By \cref{corollary: Nm Res in strongly even} we have $\Nm_e^{C_2} \Res_e^{C_2} \ol{x} = (-1)^n \ol{x}^2$, but inverting $(-1)^n \ol{x}^2$ is the same thing as inverting $\ol{x}$. Moreover, one can now employ the same argument as in \cref{prop: localizations are commutative} to obtain $\E_{\infty}^{C_2}$-ring structures on the localizations.
\end{proof}

We can recover the classical result about commuting $\THH$ with inverting elements  \cite[Corollary 4.12]{stonek2020thhku}.

\begin{corollary} \label{corollary: THH commutes with inverting}
    Let $R$ be an $\E_{\infty}$-ring spectrum, then we can recover the classical equivalence
    \[ \THH(R[x^{-1}]) \simeq \THH(R)[x^{-1}] \]
    for every $x \in \pi_* R$.
\end{corollary}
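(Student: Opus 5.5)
The plan is to deduce the classical statement from \cref{prop: THR inversion} by endowing $R$ with the trivial $C_2$-action. Let $R$ be an $\E_{\infty}$-ring. The idea is to apply \cref{prop: THR inversion} to a Borel-type $\E_{\infty}^{C_2}$-ring and then restrict to underlying spectra.

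\textbf{Step 1: a $C_2$-equivariant model.} Consider the $C_2$-spectrum $\underline{R} \coloneqq \map(EC_{2,+}, R)$, the cofree (Borel) $C_2$-spectrum with trivial action. The cofree functor $\Sp \to \Sp^{C_2}$ is right adjoint to the symmetric monoidal functor $\Res_e^{C_2}$, and cofree spectra admit all norms. Hence $\underline{R}$ is an $\E_{\infty}^{C_2}$-ring with $\Res_e^{C_2}\underline{R} \simeq R$, and with the $C_2$-action on the underlying spectrum trivial.

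\textbf{Step 2: producing the class $\ol{x}$.} For $x \in \pi_n R$, take $V = n$ with trivial action. Since the action on $R$ is trivial, $x \colon \S^n \to R$ is $C_2$-equivariant for the trivial action on $\S^n$. It therefore corresponds, via the adjunction, to a class $\ol{x} \in \pi_n^{C_2}(\underline{R})$ with $\Res_e^{C_2}\ol{x} = x$.

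\textbf{Step 3: applying the proposition.} \cref{prop: THR inversion} gives
\[ \THR\bigl(\underline{R}[(\Nm_e^{C_2}x)^{-1}]\bigr) \simeq \THR(\underline{R})[(\Nm_e^{C_2}x)^{-1}]. \]
Now restrict to underlying spectra. Restriction is symmetric monoidal and commutes with relative tensor products, so
\[ \Res_e^{C_2}\THR(A) \simeq \Res_e A \otimes_{\Res_e A \otimes \Res_e A} \Res_e A \simeq \THH(\Res_e A). \]
Restriction also commutes with inverting elements, which are filtered colimits of multiplication maps. By \cref{lemma: res n res},
\[ \Res_e^{C_2}\Nm_e^{C_2}x = (\deg \tau)\, x^2 = x^2, \]
since $\tau$ acts trivially on $S^n$. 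Restricting both sides thus yields
\[ \THH(R[x^{-2}]) \simeq \THH(R)[x^{-2}]. \]
Finally, inverting $x^2$ is the same as inverting $x$.

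\textbf{Main obstacle.} The delicate step is the compatibility in Step 3: checking that $\Res_e^{C_2}$ applied to $N_e^{C_2}\Res_e^{C_2}$ gives $R \otimes R$, and that the bimodule structures match the usual $\THH$ ones. The double coset formula provides this, but the twist by $\tau$ on one factor must be identified with the identity here, which again uses triviality of the action.

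An alternative route avoids Borel models altogether. One can rerun the proof of \cref{prop: THR inversion} non-equivariantly, with $N_e^{C_2}$ replaced by $R \otimes R$ and $\Nm$ replaced by $x \otimes x \mapsto x^2$, or simply by $x \otimes 1$. The identical chain of base changes $R[x^{-1}] \simeq (R\otimes R)[(x\otimes 1)^{-1}] \otimes_{R\otimes R} R$ then gives the result directly.
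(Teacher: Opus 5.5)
Your proposal is correct and follows essentially the same route as the paper. You Borelify $R$ with the trivial action to get an $\E_{\infty}^{C_2}$-ring, apply \cref{prop: THR inversion}, restrict to underlying spectra where $\Nm_e^{C_2}x$ becomes $x^2$ by the double coset formula, and use that inverting $x^2$ is the same as inverting $x$.

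One harmless slip: $\map(E{C_2}_+,-)$ is right adjoint to the forgetful functor $\Sp^{C_2}\to\mathrm{Fun}(\mathrm{B}C_2,\Sp)$, not to $\Res_e^{C_2}\colon\Sp^{C_2}\to\Sp$, whose right adjoint is coinduction $\map(C_{2+},-)$. The object you actually use, and the properties you need from it (all norms, underlying spectrum $R$), are correct.
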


\begin{proof}
    Endowed with the trivial action, the Borelification of $R$ becomes an $\E_{\infty}^{C_2}$-ring spectrum by \cite[Theorem 2.4, 2.7]{hillmeier2017}. Now, consider the element $\ol{x} = \Nm_e^{C_2} x$. Then, $\Res_e^{C_2} \ol{x} = x \cdot \tau_* x = x^2$ by the norm-restriction double coset formula remembering that $R$ has trivial $C_2$-action. Since inverting $x^2$ is equivalent to inverting $x$, the underlying of \cref{prop: THR inversion} yields the $\THH$ statement.
\end{proof}

\subsection{Applications of Real Snaith in THR computations}

Our first $\THR$-result applies the Real Snaith theorem. Stonek showed $\THH(\KU) \simeq \KU[\mathrm{B}^2\mathrm{U}(1)]$ using Snaith's theorem, see \cite[Theorem 5.21]{stonek2020thhku} and \cite[Example 7.1]{rasekhStonekValenzuela2022thom}. Using the Real Snaith theorem, we can lift this equivalence.

\begin{theorem} \label{theorem: THR KUR}
	There are equivalences 
    \[ \THR(\KU_{\R}) \simeq \KU_{\R} \otimes \Sigma_+^{\infty} \mathrm{B}^\rho \mathrm{U}_{\R}(1) \quad \text{resp.} \quad \THR \left(\MUP_{\R} \right) \simeq \MUP_{\R} \otimes \Sigma_+^{\infty} \B^{\rho}\mathrm{U}_{\R} \] of $\E_{\infty}^{C_2}$-algebras resp.~of $\E_1$-algebras.
\end{theorem}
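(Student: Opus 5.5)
We follow the non-equivariant strategy of Stonek and Rasekh--Stonek--Valenzuela. We write $\THR$ as $\THR$ of a spherical group ring with inverted norm element, and then commute the inversion past $\THR$.

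For $\KU_{\R}$ we start from the norm inverted Real Snaith theorem (\cref{corollary: norm inverted snaith}), $\KU_{\R} \simeq \Sigma_+^{\infty}\CPR^{\infty}[\Nm_e^{C_2}(\beta)^{-1}]$ as $\E_{\infty}^{C_2}$-algebras. Here $\beta = \Res_e^{C_2}\beta_{\R}$ is restricted from $\beta_{\R} \in \pi_\rho^{C_2}(\Sigma_+^{\infty}\CPR^{\infty})$. This is exactly the shape required by \cref{prop: THR inversion}, applied with $R = \Sigma_+^{\infty}\CPR^{\infty}$, $V = \rho$ and $\ol{x} = \beta_{\R}$, and it gives
\[ \THR(\KU_{\R}) \simeq \THR(\Sigma_+^{\infty}\CPR^{\infty})[\Nm_e^{C_2}(\beta)^{-1}] \]
as $\E_{\infty}^{C_2}$-algebras. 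Next we compute $\THR$ of the spherical group ring. Since $\CPR^{\infty} \simeq \B^{\rho}\mathrm{U}_{\R}(1)$ is an $\E_{\infty}^{C_2}$-group in $C_2$-spaces, the formula $\THR(R) \simeq R \otimes S^{\sigma}$ gives
\[ \THR(\Sigma_+^{\infty}\CPR^{\infty}) \simeq \Sigma_+^{\infty}(\CPR^{\infty} \otimes S^{\sigma}) \simeq \Sigma_+^{\infty}(\CPR^{\infty} \times \B^{\sigma}\CPR^{\infty}) \simeq \Sigma_+^{\infty}\CPR^{\infty} \otimes \Sigma_+^{\infty}\B^{\sigma}\CPR^{\infty}. \]
The middle step uses that tensoring a grouplike $\E_{\infty}^{C_2}$-space with the pointed $C_2$-space $S^{\sigma}$ splits off the unit, and that $\Sigma_+^{\infty}$ commutes with these colimits. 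We then identify $\B^{\sigma}\CPR^{\infty} \simeq \B^{\sigma}\B^{\rho}\mathrm{U}_{\R}(1) \simeq \B^{\rho}\mathrm{U}_{\R}(1)$ with the correct delooping degree, using $\rho = 1+\sigma$ and $\mathrm{U}_{\R}(1) \simeq \Omega^{\sigma}\B^{\sigma}$-type identifications in $\ku_{\R}$-language: $\CPR^{\infty} = \Omega^{\infty}\Sigma^{\rho}\mathrm{H}\ul{\Z}$, so $\B^{\sigma}$ of it is $\Omega^{\infty}\Sigma^{1+\rho}\mathrm{H}\ul{\Z}$. This target is the space denoted $\B^{\rho}\mathrm{U}_{\R}(1)$ when $\mathrm{U}_{\R}(1) = \Omega^{\infty}\Sigma\mathrm{H}\ul{\Z}$ is taken with its conjugation action. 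Finally, the element $\Nm_e^{C_2}(\beta)$ lives in the first tensor factor, because the unit map $R \to \THR(R)$ of \cref{observation: algebra structure on THR} is the inclusion of that factor. Inverting it therefore only affects that factor and yields $\KU_{\R} \otimes \Sigma_+^{\infty}\B^{\rho}\mathrm{U}_{\R}(1)$, again by \cref{corollary: norm inverted snaith}.

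For $\MUPR$ we argue in the same way with $\BUR = \Omega^{\infty}\Sigma^{\rho}\ku_{\R}$, which is also an $\E_{\infty}^{C_2}$-group. The splitting $\THR(\Sigma_+^{\infty}\BUR) \simeq \Sigma_+^{\infty}\BUR \otimes \Sigma_+^{\infty}\B^{\sigma}\BUR$ holds as $\E_{\infty}^{C_2}$-algebras, and $\B^{\sigma}\BUR$ identifies with $\B^{\rho}\mathrm{U}_{\R}$. The difference is that $\MUPR \simeq \Sigma_+^{\infty}\BUR[\Nm_e^{C_2}(\beta)^{-1}]$ is only an equivalence of $\E_{\rho}$-algebras. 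Since $\THR$ of an $\E_{\rho}$-algebra is a relative tensor product over $N_e^{C_2}\Res_e^{C_2}$, and such a tensor product retains only an $\E_1$-structure in general, this equivalence only transports an $\E_1$-equivalence on $\THR$. This loss of structure explains the weaker statement. Concretely, we compute $\THR(\MUPR) \simeq \THR(\Sigma_+^{\infty}\BUR[\Nm_e^{C_2}(\beta)^{-1}])$ as $\E_1$-algebras, then apply \cref{prop: THR inversion} to the $\E_{\infty}^{C_2}$-model, and rewrite the first factor as $\MUPR$ using the $\E_{\rho}$-equivalence, which restricts to an $\E_1$-equivalence.

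We expect the main obstacle to be the bookkeeping of multiplicative structures in this last step. One must check that the $\E_{\rho}$-equivalence induces an $\E_1$-equivalence on $\THR$ compatibly with the splitting and the inversion. A secondary point is the identification $\B^{\sigma}\B^{\rho}\mathrm{U}_{\R}(1)$ resp.\ $\B^{\sigma}\BUR$ with the claimed deloopings. We would verify this by comparing with $\Omega^{\infty}$ of suspensions of $\mathrm{H}\ul{\Z}$ resp.\ $\ku_{\R}$ and using the fact that $\B^{\sigma}\Omega^{\infty}E \simeq \Omega^{\infty}\Sigma^{\sigma}E$ for connective $E$ with suitable connectivity on fixed points.
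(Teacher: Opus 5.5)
Your route is the paper's: the norm-inverted Real Snaith theorem (\cref{corollary: norm inverted snaith}), then \cref{prop: THR inversion} with $\ol{x}=\beta_{\R}$, then $\THR$ of the spherical group ring, then absorbing the inversion into the first tensor factor. The only difference is in the third step. The paper cites Dotto--Moi--Patchkoria--Reeh for $\THR$ of spherical group rings. You instead derive it from $\THR(R)\simeq R\otimes S^{\sigma}$ and the splitting $X\otimes S^{\sigma}\simeq X\times\B^{\sigma}X$ for grouplike $\E_{\infty}^{C_2}$-spaces, which comes from $\Sigma^{\infty}_+S^{\sigma}\simeq\S\oplus\Sigma^{\sigma}\S$ at a fixed basepoint. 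That is a fine alternative. It also makes explicit that the splitting is one of $\E_{\infty}^{C_2}$-algebras under $R$, which is exactly what justifies the last step and which the paper leaves implicit.

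The step that fails as written is your identification of the delooping. The final name comes out right only because two errors cancel.

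\begin{itemize}
\item Complex conjugation on $\mathrm{U}(1)$ has fixed points $\{\pm 1\}$ and acts by $-1$ on $\pi_1$. So $\mathrm{U}_{\R}(1)\simeq\Omega^{\infty}\Sigma^{\sigma}\uHZ$, whereas the space $\Omega^{\infty}\Sigma\uHZ$ you name carries the trivial action.
\item Hence $\CPR^{\infty}\simeq\B\,\mathrm{U}_{\R}(1)\simeq\Omega^{\infty}\Sigma^{\rho}\uHZ$, not $\B^{\rho}\mathrm{U}_{\R}(1)$.
\item $\B^{\sigma}$ of $\Omega^{\infty}\Sigma^{\rho}\uHZ$ is $\Omega^{\infty}\Sigma^{\rho+\sigma}\uHZ=\Omega^{\infty}\Sigma^{1+2\sigma}\uHZ$, not $\Omega^{\infty}\Sigma^{1+\rho}\uHZ$.
\item Likewise, $\B^{\sigma}\B^{\rho}\mathrm{U}_{\R}(1)\simeq\B^{\rho}\mathrm{U}_{\R}(1)$ is false.
\end{itemize}

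The space you actually wrote down, $\Omega^{\infty}\Sigma^{2+\sigma}\uHZ$, is genuinely different from the correct one. Its $C_2$-fixed points are $K(\Z/2,2)$, whereas the fixed points of $\Omega^{\infty}\Sigma^{1+2\sigma}\uHZ$ have $\pi_1=\Z/2$ and $\pi_3=\Z$. The correct bookkeeping is simply $\B^{\sigma}\CPR^{\infty}\simeq\B^{\sigma}\B\,\mathrm{U}_{\R}(1)\simeq\B^{1+\sigma}\mathrm{U}_{\R}(1)=\B^{\rho}\mathrm{U}_{\R}(1)$. This is exactly parallel to $\B^{\sigma}\BUR\simeq\B^{\rho}\mathrm{U}_{\R}$, which you state correctly.

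Finally, your reason for the drop to $\E_1$ is only heuristic, since a relative tensor product of algebras is not automatically an algebra. What is actually used is that $\THR$ is functorial from $\E_{\rho}$-algebras to $\E_1$-algebras, which the paper also takes for granted.
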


\begin{proof}
    Let us demonstrate the computation for $\THR(\MUP_{\R})$, the same works for $\THR(\KU_{\R})$. By the norm inverted Real Snaith theorem (\cref{corollary: norm inverted snaith}) we have $\THR(\MUP_{\R}) \simeq \THR(\Sigma_+^{\infty} \BU_{\R}[\Nm_e^{C_2}(\beta)^{-1}])$. Using \cref{prop: THR inversion} we may commute the localization with $\THR$. Combined with the computation of $\THR$ of spherical group rings \cite[Proposition 5.12]{dottoMoiPatchkoriaReeh2021THR}, we obtain
    \begin{align*}
        \THR(\MUP_{\R}) &\simeq \THR(\Sigma_+^{\infty} \BU_{\R})\left[\Nm_e^{C_2}(\beta)^{-1} \right] 
        \\ &\simeq \left(\Sigma_+^{\infty} \BU_{\R} \otimes \Sigma_+^{\infty} \B^{\rho} \mathrm{U}_{\R} \right)\left[\Nm_e^{C_2}(\beta)^{-1} \right]
        \\ &\simeq \MUP_{\R} \otimes \Sigma_+^{\infty} \B^{\rho} \mathrm{U}_{\R},
    \end{align*}
    as desired.
\end{proof}

Applying $\Phi^{C_2}$ yields:

\begin{corollary}
    There is an $\E_1$-equivalence $\MOP \otimes_{\MUP} \MOP \simeq \MOP \otimes \Sigma_+^{\infty} \B^{\rho}\mathrm{U}_{\R}^{C_2}$.
\end{corollary}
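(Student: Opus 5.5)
The plan is to apply the geometric fixed points functor $\Phi^{C_2}$ to the $\E_1$-equivalence $\THR(\MUP_{\R}) \simeq \MUP_{\R} \otimes \Sigma_+^{\infty}\B^{\rho}\mathrm{U}_{\R}$ of \cref{theorem: THR KUR}. Since $\Phi^{C_2}$ is symmetric monoidal, it preserves colimits and sends $\E_1$-algebras to $\E_1$-algebras. The right-hand side therefore becomes $\Phi^{C_2}\MUP_{\R} \otimes \Phi^{C_2}\Sigma_+^{\infty}\B^{\rho}\mathrm{U}_{\R}$. Here I will use $\Phi^{C_2}\MUP_{\R} \simeq \MOP$, which is the second equivalence in \cref{corollary: real Snaith} combined with \cref{theorem: Erho Real Snaith}. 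I will also use $\Phi^{C_2}\Sigma_+^{\infty}Y \simeq \Sigma_+^{\infty}Y^{C_2}$ for a $C_2$-space $Y$. Together these identify the right-hand side with $\MOP \otimes \Sigma_+^{\infty}\B^{\rho}\mathrm{U}_{\R}^{C_2}$, where $\B^{\rho}\mathrm{U}_{\R}^{C_2}$ is read as the fixed points of $\B^{\rho}\mathrm{U}_{\R}$.

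For the left-hand side, I will write $\THR(\MUP_{\R}) \simeq \MUP_{\R} \otimes_{N_e^{C_2}\MUP} \MUP_{\R}$. Since $\Phi^{C_2}$ is symmetric monoidal and commutes with geometric realizations, it commutes with relative tensor products, computed as bar constructions. This gives
\[ \Phi^{C_2}\THR(\MUP_{\R}) \simeq \Phi^{C_2}\MUP_{\R} \otimes_{\Phi^{C_2}N_e^{C_2}\MUP} \Phi^{C_2}\MUP_{\R}. \]
The Hill--Hopkins--Ravenel diagonal equivalence $\Phi^{C_2}N_e^{C_2}X \simeq X$ identifies the middle term with $\MUP$, so the left-hand side becomes $\MOP \otimes_{\MUP}\MOP$.

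Two compatibility points need checking. First, the module structures must match. The map $\MUP \simeq \Phi^{C_2}N_e^{C_2}\MUP \to \Phi^{C_2}\MUP_{\R} \simeq \MOP$ is the geometric fixed points of the norm counit. I expect this to be the standard map used to define $\MOP \otimes_{\MUP}\MOP$, but in any case I will define the tensor product via this map. Second, the $\E_1$-structures must be preserved throughout. This holds because every step is a symmetric monoidal functor applied to an $\E_1$-equivalence.

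The main obstacle is the first compatibility point. The goal there is to make sure the $\MUP$-module structure on $\MOP$ obtained from the diagonal is the intended one, since the statement only holds for that module structure. The fallback is to interpret the statement with that module structure built in.
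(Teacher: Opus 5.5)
Your proposal is correct and follows the paper's argument: apply the symmetric monoidal, colimit-preserving functor $\Phi^{C_2}$ to the $\E_1$-equivalence of \cref{theorem: THR KUR}, using $\Phi^{C_2}N_e^{C_2}\MUP \simeq \MUP$, $\Phi^{C_2}\Sigma_+^{\infty}Y \simeq \Sigma_+^{\infty}Y^{C_2}$, and the $\MUP$-module structure on $\MOP$ induced by the norm counit, exactly as you describe. One small point: take $\Phi^{C_2}\MUP_{\R} \simeq \MOP$ as a classical input (Araki, Hu--Kriz) rather than deriving it from \cref{corollary: real Snaith}, since the paper's proof of that corollary already presupposes this identification.
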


\begin{remark}
    Using techniques to compute $\THR$ of Thom spectra via factorization homology Horev--Klang--Zou compute $\THR(\MU_{\R}) \simeq \MU_{\R} \otimes \Sigma_+^{\infty} \B^{\rho} \mathrm{U}_{\R}$, see \cite[Corollary 7.2.2]{hahn2024equivariantnonabelianpoincareduality}.  This is however conditional on a $G$-symmetric monoidal straightening-unstraightening result \cite[Remark 7.1.2]{quinnZhu2026multiplicativeequivariantthomspectra}, which is not yet recorded in the literature. We will discuss such a result in forthcoming work with Siddharth. 
    
    We note that the formula for the $\THR$ of Thom spectra \cite[Corollary 7.1.3]{hahn2024equivariantnonabelianpoincareduality} requires that the base space of the Thom spectrum is connected, so it cannot be used to compute $\THR(\MUP_{\R})$. Even if one can get rid of the connectedness assumption, it would yield $\THR(\MUP_{\R}) \simeq \MUP_{\R} \otimes \Sigma_+^{\infty} \B^{\sigma}\BUP_{\R}$, which is different from our computation in \cref{theorem: THR KUR}. This is in parallel to the non-equivariant phenomenon 
    \[ \THH(\MUP) \simeq \MUP \otimes \Sigma_+^{\infty} \SU \simeq \MUP \otimes \Sigma_+^{\infty} \mathrm{U} \] 
    using Snaith resp.~the $\THH$ formula for Thom spectra. See \cite[Example 7.13]{rasekhStonekValenzuela2022thom}.
\end{remark}

A similar observation can be made for $\MOP$. The formula for the $\THH$ of Thom spectra yields
\[ \THH(\MOP) \simeq \MOP \otimes \Sigma_+^{\infty} \mathrm{BBOP} \simeq \MOP \otimes \Sigma_+^{\infty} \mathrm{O}, \]
while we can also obtain:

\begin{proposition} \label{prop: THH MOP}
    There is an equivalence $\THH(\MOP) \simeq \MOP \otimes \Sigma_+^{\infty} \B^2 \mathrm{O}$ of spectra.
\end{proposition}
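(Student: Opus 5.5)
We mimic the non-equivariant strategy of Stonek and Rasekh--Stonek--Valenzuela, using the real Snaith theorem from \cref{corollary: real Snaith}: $\MOP \simeq \Sigma_+^{\infty}\BO[\eta^{-1}]$, and this equivalence is $\E_1$. Since only an equivalence of spectra is claimed, we do not track multiplicative structure beyond what is needed to make sense of $\THH$ and the localization.

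\textbf{Step 1: commute the localization with $\THH$.} Since $\Sigma_+^{\infty}\BO$ is an $\E_{\infty}$-ring, \cref{corollary: THH commutes with inverting} gives
\[ \THH(\Sigma_+^{\infty}\BO[\eta^{-1}]) \simeq \THH(\Sigma_+^{\infty}\BO)[\eta^{-1}]. \]
Here $\eta$ acts through the module structure coming from the unit map $\Sigma_+^{\infty}\BO \to \THH(\Sigma_+^{\infty}\BO)$. There is a subtlety: the real Snaith equivalence is only $\E_1$, so a priori $\THH(\MOP)$ is computed with the $\E_1$-structure transported from $\Sigma_+^{\infty}\BO[\eta^{-1}]$. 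We will either argue that $\THH$ only depends on the $\E_1$-structure, or note that the localization $\Sigma_+^{\infty}\BO[\eta^{-1}]$ with its $\E_\infty$-structure is $\E_1$-equivalent to $\MOP$. Either way $\THH(\MOP)\simeq \THH(\Sigma_+^{\infty}\BO[\eta^{-1}])$ as spectra, since $\THH$ of an $\E_1$-ring depends only on its $\E_1$-structure.

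\textbf{Step 2: $\THH$ of a spherical group ring.} $\BO$ is an $\E_\infty$-group, indeed an infinite loop space. The standard formula gives
\[ \THH(\Sigma_+^{\infty}\BO) \simeq \Sigma_+^{\infty}\mathcal{L}\BO \simeq \Sigma_+^{\infty}(\BO\times \B\BO) \simeq \Sigma_+^{\infty}\BO\otimes\Sigma_+^{\infty}\B^2\mathrm{O}. \]
The first step uses free loop spaces, and the splitting $\mathcal{L}X\simeq X\times\Omega X$ holds for loop spaces $X$. Since $\BO = \B\mathrm{O}$, we have $\Omega\BO\simeq \mathrm{O}$; for the $\B^2$ version we use the alternative formula $\THH(\Sigma_+^\infty G)\simeq \Sigma_+^\infty G\otimes \Sigma_+^\infty \B G$ for $G=\BO$, which gives $\B\BO = \B^2\mathrm{O}$. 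This is the analogue of \cite[Proposition 5.12]{dottoMoiPatchkoriaReeh2021THR} with trivial action, used in \cref{theorem: THR KUR}.

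\textbf{Step 3: localize.} The splitting is one of $\Sigma_+^{\infty}\BO$-modules, with the module structure on the left factor. Inverting $\eta$ then gives
\[ \THH(\MOP) \simeq \Sigma_+^{\infty}\BO[\eta^{-1}]\otimes\Sigma_+^{\infty}\B^2\mathrm{O}\simeq \MOP\otimes\Sigma_+^{\infty}\B^2\mathrm{O}, \]
again by \cref{corollary: real Snaith}.

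The main obstacle is checking that the $\Sigma_+^{\infty}\BO$-module structure in Step 3 is the expected one. Concretely, the unit $\Sigma_+^{\infty}\BO\to\THH(\Sigma_+^{\infty}\BO)$ must correspond under the splitting to the inclusion of the first factor $\BO\to\BO\times\B^2\mathrm{O}$, so that inverting $\eta$ only affects that factor. This follows because the unit is induced by the constant loops $\BO\to \mathcal L\BO$, and the splitting of $\mathcal{L}$ of an $\E_\infty$-group is one of $\E_\infty$-groups compatible with constant loops. We will cite Rasekh--Stonek--Valenzuela for this compatibility.
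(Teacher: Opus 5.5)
Your route is the paper's: the $\E_1$ real Snaith equivalence $\MOP\simeq\Sigma_+^{\infty}\BO[\eta^{-1}]$, commuting the localization past $\THH$ via the classical inversion corollary, the formula for $\THH$ of a spherical group ring, and then localizing a free module. Step 1 and the localization in Step 3 are fine. However, Step 2 and your ``main obstacle'' paragraph contain an error that only happens to cancel.

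For a grouplike monoid $G$ one has $\THH(\Sigma_+^{\infty}G)\simeq\Sigma_+^{\infty}\mathcal{L}\B G$, not $\Sigma_+^{\infty}\mathcal{L}G$. Also $\mathcal{L}\BO\simeq\BO\times\mathrm{O}$, not $\BO\times\B\BO$, as you observe yourself a sentence later. Taken literally, your first display would give $\Sigma_+^{\infty}\BO\otimes\Sigma_+^{\infty}\mathrm{O}$. This already differs from $\Sigma_+^{\infty}\BO\otimes\Sigma_+^{\infty}\B^2\mathrm{O}$ on $\pi_0$, because $\mathrm{O}$ is disconnected. Your ``alternative formula'' is not an alternative but the correct statement: $\THH(\Sigma_+^{\infty}\BO)\simeq\Sigma_+^{\infty}\mathcal{L}\B^2\mathrm{O}\simeq\Sigma_+^{\infty}(\B^2\mathrm{O}\times\BO)$.

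Accordingly, the unit $\Sigma_+^{\infty}\BO\to\THH(\Sigma_+^{\infty}\BO)$ is not induced by constant loops. It is induced by the inclusion of based loops $\BO\simeq\Omega\B^2\mathrm{O}\to\mathcal{L}\B^2\mathrm{O}$, i.e.\ the $0$-simplices of the cyclic bar construction. The constant loops $\B^2\mathrm{O}\to\mathcal{L}\B^2\mathrm{O}$ give the other factor. The conclusion you need, that the unit is the inclusion of the $\BO$-factor, is nonetheless true.

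A clean way to get both the splitting and the module structure at once is the following. For an $\E_{\infty}$-ring $R$ one has $\THH(R)\simeq R\otimes S^1$ (tensor in $\E_{\infty}$-rings), with unit induced by $*\to S^1$. Moreover, $\Sigma_+^{\infty}$ from $\E_{\infty}$-groups to $\E_{\infty}$-rings preserves colimits. Write $\BO\simeq\Omega^{\infty}g$ for a connective spectrum $g$. Then the tensor $\BO\otimes S^1$ in $\E_{\infty}$-groups is $\Omega^{\infty}(g\oplus\Sigma g)\simeq\BO\times\B^2\mathrm{O}$, and the unit is the inclusion of the first factor. Hence $\THH(\Sigma_+^{\infty}\BO)\simeq\Sigma_+^{\infty}\BO\otimes\Sigma_+^{\infty}\B^2\mathrm{O}$ as $\Sigma_+^{\infty}\BO$-algebras. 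Inverting $\eta$ then gives Step 3 directly, without any appeal to Rasekh--Stonek--Valenzuela for the compatibility.
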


\begin{proof}
    This follows the same strategy as in \cref{theorem: THR KUR} using \cref{corollary: THH commutes with inverting}. The main ingredient is the real Snaith theorem, i.e.~an equivalence $\MOP \simeq \Sigma_+^{\infty}\BO[\eta^{-1}]$ of $\E_1$-ring spectra (\cref{corollary: real Snaith}).
\end{proof}

\bibliographystyle{alpha}
\bibliography{main}

\Addresses

\end{document}